\documentclass[11pt]{article}
\usepackage{amsfonts}
\usepackage{CJK}
\usepackage{graphics}
\usepackage{indentfirst}
\usepackage{cite}
\usepackage{latexsym}
\usepackage{amsmath}
\usepackage{amsthm}
\usepackage{amssymb}
\usepackage[dvips]{epsfig}
\usepackage{amscd}

\usepackage{color}

\hoffset -1.2cm

\setlength{\parindent}{12pt}                
\setlength{\parskip}{3pt plus1pt minus2pt}  
\setlength{\baselineskip}{20pt plus2pt minus1pt}
 \setlength{\textheight}{23.5 true cm}      
 \setlength{\textwidth}{14.5 true cm}
  \topmargin    -0.5cm
\newtheorem{theorem}{Theorem}[section]
\newtheorem{remark}{Remark}[section]
\newtheorem{definition}{Definition}[section]
\newtheorem{lemma}[theorem]{Lemma}
\newtheorem{pro}{Proposition}[section]

\newcommand{\ti}{\tilde}

\renewcommand{\div}{ {\rm div }  }

\newcommand{\na}{\nabla }

\newcommand{\pa}{\partial}
\renewcommand{\r}{\mathbb{R}}

\newcommand{\bl}{\begin{lemma}}
\newcommand{\el}{\end{lemma}}
\newcommand{\et}{\end{theorem}}

\newcommand{\curl}{{\rm curl} }
\newcommand{\te}{\theta}

\newcommand{\al}{\alpha}
\newcommand{\de}{\delta}
\newcommand{\ve}{\varepsilon}
\newcommand{\la}{\label}

\newcommand{\p}{p(\rho)  }

\newcommand{\ka}{\kappa}

\newcommand{\bn}{\begin{eqnarray}}
\newcommand{\en}{\end{eqnarray}}
\newcommand{\bnn}{\begin{eqnarray*}}
\newcommand{\enn}{\end{eqnarray*}}

\newcommand{\bnnn}{\begin{eqnarray*}}
\newcommand{\ennn}{\end{eqnarray*}}

\newcommand{\ba}{\begin{aligned}}
\newcommand{\ea}{\end{aligned}}
\newcommand{\be}{\begin{equation}}
\newcommand{\ee}{\end{equation}}
\def\O{\Omega}
\def\p{\partial}
\def\norm[#1]#2{\|#2\|_{#1}}

\def\o{\omega}

\newcommand{\n}{\rho}
\newcommand{\thatsall}{\hfill$\Box$}

\def\la{\label}

\def\na{\nabla}

\def\p{\partial}
\def\norm[#1]#2{\|#2\|_{#1}}

\def\o {\omega}

\def\O{B_r}

\makeatletter      
\@addtoreset{equation}{section}
\makeatother       
\title{On Existence and Blowup Criterion of Strong Solutions to Cauchy Problem of 2D  Full Compressible Navier-Stokes System with Vacuum}
\author
{ Xue Wang\thanks{Email addresses:  xuewa@amss.ac.cn} \\[3mm]
\\{\normalsize    School of Mathematical Sciences,}\\
{\normalsize  University of Chinese Academy of Sciences, Beijing 100049, P. R. China}}
\date{}

\begin{document}
\maketitle

\begin{abstract}
This paper investigates the Cauchy problem of two-dimensional  full compressible Navier-Stokes system  with  density and temperature vanishing  at infinity. For the strong solutions, some a priori weighted $L^2(\r^2)$-norm of the gradient of velocity is obtained by the techniques of $A_p$ weights and cancellation of singularity. Based on this key weighted estimate and some basic weighted analysis of velocity and temperature,   we  establish the local existence and uniqueness of strong solutions with initial vacuum by means of a two-level approximation scheme. Meanwhile, for $q_1$ and $q_2$ as in \eqref{q7}, the $L^\infty_tL^{q_1}_x$-norm of velocity and the $L^2_tL^{q_2}_x$-norm of temperature are derived originally.  Moreover, we obtain a blowup criterion only in terms of the temporal integral of the maximum norm of divergence of velocity, which is independent of the temperature.
\end{abstract}

\textbf{Keywords}:  full compressible Navier-Stokes  system;    $A_p$ weights; cancellation of singularity;  blowup criterion; vacuum

\section{Introduction}
We consider the  full compressible Navier-Stokes system in two-dimensional (2D) spatial domain $\Omega\subset\r^2$, which is read as follows:
\be \la{q1}  \begin{cases}
\n_t+{\rm div} (\n u)=0,\\
(\n u)_t+\div(\n u\otimes u)-\mu\Delta u-(\mu+\lambda)\na{\rm div} u+\na P=0, \\
c_\upsilon[(\n \te)_t+\div(\n \te u)]-2\mu |\mathfrak{D}(u)|^2-\lambda (\div u)^2+P\div u-\ka\Delta\te=0,
\end{cases}\ee
where $t\ge 0$, $x=(x_1,x_2)\in \Omega$, and $\n$, $u=\left(u^1,u^2\right),$ $\te$, $P=R\n\te(R>0),$ and $\mathfrak{D}(u)  = (\nabla u + (\nabla u)^{\rm tr})/2$ represent respectively the fluid density, velocity, absolute temperature, pressure, and   deformation tensor.
The constant viscosity coefficients $\mu$ and $\lambda$ satisfy the physical restrictions:
\be\la{h3} \mu>0,\quad \mu + \lambda\ge 0.\ee
Positive constants $c_\upsilon$ and $\ka$ are, respectively, the heat capacity  and  the ratio of  the heat conductivity coefficient over the heat capacity.

Let $\Omega=\r^2$, we impose \eqref{q1} the following   initial data:
\be\la{q2}(\n,\n u,\n\te)(x,0)=(\n_0,\n_0 u_0,\n_0\te_0)(x),\quad x\in\r^2,\ee
and far field behavior (in some weak sense):
\be\la{q3}(\n,u,\te)(x,t)\rightarrow(0,0,0),\quad \text{as}\quad |x|\rightarrow \infty,\,t>0.\ee

There is huge amounts of literature on the well-posedness of solutions to the compressible Navier-Stokes equations.  For the case that the initial density is  strictly away from vacuum, the local well-posedness theory is known in \cite{se1,k1,ss}. In 1980s, Matsumura-Nishida \cite{M1} first proved the global existence of classical solutions for the initial data close to a non-vacuum equilibrium in some Sobolev space $H^s$. Later, Hoff \cite{Hof1} showed the global weak solutions with strictly positive initial density and temperature for discontinuous initial data. When the vacuum state  is allowed, the significant breakthrough is due to Lions \cite{L2} (improved by Feireisl \cite{fe1}), where he  established the global existence of  weak solutions with finite energy and arbitrary initial data, under the condition that the adiabatic exponent $\gamma$ is  suitably large. If the initial data satisfy additional  regularity and compatibility conditions,  the local existence and uniqueness of  strong and classical solutions  were  proved in  \cite{K2,hua,lxz}. Recently,  Huang-Li-Xin\cite{hulx}, Huang-Li \cite{H-L},  Cai-Li \cite{C-L}, and Li-L{\small \"u}-Wang\cite{llw}, respectively, obtained the global classical solutions to 3D Cauchy problem or slip boundary problem for barotropic flows or heat-conducting fluids \eqref{q1}  with the initial data which are of small energy but possibly large oscillations.  For  the 2D Cauchy problem, Li-Liang \cite{liliang} and Li-Xin \cite{lx} gave the local and global existence  of  classical solutions to the barotropic compressible Navier-Stokes equations.  However, there are still a lot of open problems.

On the other hand, in the process of confirming whether the global solutions exist or not, it's interesting to investigate the mechanism of blowup and the structure of possible singularities. Xin \cite{xin} first showed that if the initial density has compact support, any smooth solutions to the Cauchy problem of the full compressible Navier-Stokes system without heat conduction blow up in finite time. For more information on the blowup criteria of  compressible flow,  we refer to \cite{fj,fz,hlw,hlx,hlx2,swz2,wy,r} and the references therein. In particular, for 2D full  compressible Navier-Stokes system in  bounded domain $\Omega$, Wang \cite{wy} derived a blowup criterion regarding the divergence of the velocity only, i.e.,
\be\notag\lim_{T\rightarrow T_\ast}\|\div u\|_{L^1(0,T;L^\infty(\Omega))}=\infty,\ee
where $T^\ast<\infty$ is the maximal time of existence of a strong solution.

The aim of this paper is to establish the local existence of  strong  solutions  to the 2D Cauchy problem of \eqref{q1} and a corresponding blowup criterion.
Before stating the main results, we first explain the notations and conventions used throughout the paper except the appendix. 
We set
$$\int f dx=\int_{\r^2}f dx.$$
For $1\le p\le \infty$ and integer $k\geq 0$, the standard Lebesgue and Sobolev  spaces are denoted by:
$$L^p=L^p(\r^2),\quad W^{k,p}=W^{k,p}(\r^2),\quad H^{k}=W^{k,2}.$$
Next, we give the definition of strong solutions  as follows:
\begin{definition}
If all the derivatives involved in (\ref{q1}) for $(\rho, u, \te)$ are regular distributions, and the equations (\ref{q1}) hold almost every where in $\r^2 \times (0,T)$, then $(\rho, u, \te)$ is called a strong solution to (\ref{q1}).
\end{definition}

Now, we state the first main result in this paper concerning the local existence  of strong solutions.
\begin{theorem}\la{th1}
Let $\eta_0$ be a positive constant and
\be\notag\bar x\triangleq (e+|x|^2)^{1/2}\log^{1+\eta_0}(e+|x|^2).\ee
For constants $q>2$ and $a>1$,  assume that the initial data $(\n_0\geq 0,u_0,\te_0\geq 0)$ satisfy
\be\ba\la{q4}
\bar x^a\n_0\in L^1\cap H^1\cap W^{1,q},\,(\sqrt{\n_0} u_0,\sqrt{\n_0}\te_0)\in L^2,\,\na u_0\in H^1,\,\na\te_0\in L^2,
\ea\ee
and the compatibility condition
\be\la{co1}-\mu\Delta u_0-(\mu+\lambda)\na\div u_0+R\na(\n_0\te_0)=\n_0^{1/2}g,\ee
with  $g\in L^2$.
Then there exists a positive time  $T^\ast$ such that the Cauchy problem (\ref{q1}) (\ref{q2}) (\ref{q3}) admits a unique strong solution $(\n\geq 0,u,\te\geq 0)$ on $\r^2\times(0,T^\ast]$ satisfying
\be\ba\la{q5}
\begin{cases}
\n\in C([0,T^\ast];L^1\cap H^1\cap W^{1,q}),\,\bar x^{a}\n\in L^\infty(0,T^\ast;L^1\cap H^1\cap W^{1,q}),\\
\na u\in L^\infty(0,T^\ast;H^1)\cap L^2(0,T^\ast;W^{1,q}), \quad u\in L^\infty(0,T^\ast;L^{q_1}),\\ 
\na\te,\,\sqrt t\na^2\te\in L^\infty(0,T^\ast;L^2)\cap L^2(0,T^\ast;H^{1}), \quad \te\in L^2(0,T^\ast;L^{q_2}),\\ 
\sqrt{\n}u,\,\sqrt{\n}\te,\,\sqrt{\n}u_t,\,\sqrt t\sqrt{\n}\te_t,\,\na u\bar x^{\al/2}\in L^\infty(0,T^\ast;L^2),\\ 
\na u_{t},\,\sqrt{\n}\te_t,\,\sqrt t\na\te_t\in L^2(\r^2\times(0,T^\ast)),
\end{cases}
\ea\ee
where \be\la{q7}\al\triangleq\min\{a/2,1\},\quad q_1=4/\al,\quad q_2=6/(2\al-1).\ee
\end{theorem}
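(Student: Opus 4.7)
My approach is a two-level approximation scheme, in the spirit of Li-Xin \cite{lx}. First I regularize the initial density by $\re:=\n_0+\eta$ (so it is strictly away from vacuum) and localize the problem to a disk $B_{1/\de}$ with homogeneous Dirichlet data on $u$ and a no-flux condition on $\te$. Since the approximating data are smooth and $\re$ is strictly positive, the classical local theory (e.g.\ Cho-Kim \cite{K2}) produces a unique strong solution $(\rd,\ud,\td)$ on $B_{1/\de}\times(0,T_{\de,\eta}]$. The real content of the proof is then to establish a priori bounds on some $[0,T^\ast]$ uniform in $(\de,\eta)$, and to pass to the limits $\de\to 0$ and $\eta\to 0$.

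The uniform estimates would be built in the following order. The zeroth-order energy identity controls $\sqrt{\rd}\ud$ and $\sqrt{\rd}\td$ in $L^\infty L^2$ with the standard dissipation; mass conservation yields $\bar x^a\rd\in L^\infty L^1$. Testing the momentum equation with $\ud_t$ and exploiting the compatibility \eqref{co1} produces bounds on $\sqrt{\rd}\ud_t$ and $\na^2\ud$. The analogous test of the temperature equation with $\td_t$, weighted by $\sqrt{t}$ because $\tet$ need not satisfy a second-order compatibility, produces $\sqrt{t}\sqrt{\rd}\td_t$ and $\sqrt{t}\na^2\td$ in $L^\infty L^2$ together with $\sqrt{t}\na\td_t\in L^2 L^2$. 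The density bounds in $L^1\cap H^1\cap W^{1,q}$, and their weighted counterparts $\bar x^a\rd\in L^\infty(H^1\cap W^{1,q})$, follow from the transport equation by a Gronwall argument, provided $\|\na\ud\|_{L^\infty}$ is controlled via the elliptic $W^{2,q}$ estimate for the Lam\'e system with right-hand side $\rd\dot\ud+R\na(\rd\td)$.

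The heart of the argument, and the main obstacle, is the weighted estimate
\[
\|\na\ud\,\bar x^{\al/2}\|_{L^\infty(0,T^\ast;L^2)}\le C,
\]
without which the elliptic bound above cannot be closed: in $\r^2$ the basic energy controls only $\sqrt{\rd}u$, Hardy's inequality fails, and no direct decay on $u$ is available. To obtain it I would decompose $\na\ud$ through the effective viscous flux $F=(2\mu+\lambda)\div\ud-R\rd\td$ and the vorticity $\w=\curl\ud$, use that $\bar x^\al$ lies in the Muckenhoupt class $A_2$ for $\al\in(0,1]$ so that the Riesz transforms are bounded on $L^2(\bar x^\al dx)$, and apply the \emph{cancellation of singularity} idea to absorb the borderline logarithmic contributions from the pressure $R\rd\td$ and the convective term. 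The coupling with temperature through the pressure makes this particularly delicate and forces the time-weighted higher-order bounds on $\td$ described above. A consequence of this weighted inequality, via Gagliardo-Nirenberg on weighted spaces, is $\ud\in L^\infty(0,T^\ast;L^{q_1})$ with $q_1=4/\al$; combining $\na\td\in L^\infty L^2$ with the integrated temperature equation then yields $\td\in L^2(0,T^\ast;L^{q_2})$ with $q_2=6/(2\al-1)$.

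With the uniform estimates in hand, the passage to the limit proceeds in two stages. First I let $\de\to 0$: the decay $\bar x^a\rd\in L^\infty(L^1\cap H^1)$ ensures integrability at infinity and allows a diagonal/compactness argument on exhausting balls to produce a strong solution on $\r^2\times(0,T^\ast]$ with initial density $\n_0+\eta$. Then $\eta\to 0$ removes the vacuum regularization and gives the solution of Theorem \ref{th1} in the class \eqref{q5}. Uniqueness follows from a standard energy estimate for the difference of two solutions, where the weighted bound on $\na u$ is used once more to handle the convective terms in the absence of $u\in L^2(\r^2)$.
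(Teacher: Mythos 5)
Your high-level structure matches the paper's: a two-level approximation, with the analysis pivoting on the weighted bound $\|\nabla u\,\bar x^{\alpha/2}\|_{L^\infty L^2}$, obtained through the effective-viscous-flux/vorticity decomposition, $A_2$ weights for the Calderon--Zygmund part, and a cancellation. You correctly identify this weighted estimate as the crux and correctly anticipate the Caffarelli--Kohn--Nirenberg step giving $u\in L^{q_1}$. However, there are two concrete gaps.

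First, the approximation scheme. Shifting the initial density by a constant, $\re=\n_0+\eta$, makes $\bar x^a\re$ fail to be in $L^1(\r^2)$, and in fact $\re\notin L^1(\r^2)$ at all. Once you send $\de\to 0$ to reach the whole space with $\eta$ fixed, the total mass is infinite, the weighted density bounds blow up, and the key estimates built on $\|\n\bar x^a\|_{L^1\cap L^\infty}$ (hence $\psi_2$) collapse. The paper avoids this by never introducing a constant shift: in the bounded-domain stage it perturbs by a rapidly decaying term of the form $r^{-1}e^{-|x|^2}$, and its second approximation level is not a density regularization at all but a damping term $\de\te$ added to the temperature equation (see system \eqref{a1}). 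That damping is what produces a uniform-in-$r$ control of $\|\te\|_{L^2}$ in the bounded-domain stage, which is otherwise unavailable since $\sqrt\n\te\in L^2$ does not control $\te$ when the density vanishes and Poincar\'e degenerates on $B_r$ as $r\to\infty$. Your scheme has no substitute for this; in particular, the claim that time weights $\sqrt t$ on $\te_t$ suffice to close the estimates is not what happens in the paper---the $\sqrt t$ weights appear only as an a posteriori regularity gain, not in the closure of $\psi_3$.

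Second, the cancellation. You invoke a ``cancellation of singularity idea to absorb the borderline logarithmic contributions from the pressure and the convective term,'' but this is not what is actually used, and as stated it does not identify the mechanism. The decisive observation in the paper is the exact identity
$\int_{\r^2}(\n\dot u)(y,t)\,dy=0$, which follows from integrating \eqref{a1}$_2$ over $\r^2$ (every term on the right is a divergence, and the density decay justifies the integration by parts). With $\p_i u=\mathcal T_iP+K_i\ast(\n\dot u)$, this vanishing moment allows one to rewrite the ``bad'' part as
$\int(K_i(x-y)-K_i(x))(\n\dot u)(y,t)\,dy$,
trading one power of $|x|$ for one power of $|y|$ in the kernel estimate and thereby giving the extra spatial decay of $\na u$ that closes the weighted $L^2$ bound. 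Without this moment condition the kernel $K_i(x)\sim|x|^{-1}$ is not integrable at infinity against $|x|^{\beta}$ and the estimate \eqref{key2} cannot be obtained. Note also that this cancellation is genuinely a whole-space fact---there are boundary contributions on $B_r$---so the uniform weighted estimate has to be established \emph{after} the domain exhaustion, which is another reason the paper takes the domain parameter to infinity first (at fixed $\de$) and only then sends $\de\to 0$.
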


Based on the strong solutions obtained in Theorem \ref{th1}, we have the following blowup criterion.
\begin{theorem}\la{th2}
Under the conditions of Theorem \ref{th1}, let $(\n,u,\te)$ be the strong solution to the Cauchy problem (\ref{q1}) (\ref{q2}) (\ref{q3})  obtained in Theorem \ref{th1}, which satisfies (\ref{q5}). If $T_\ast<\infty$ is the maximal time of existence, then
\be\la{qq6}\lim_{T\rightarrow T_\ast}\|\div u\|_{L^1(0,T;L^\infty)}=\infty.\ee
\end{theorem}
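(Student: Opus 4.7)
The plan is to argue by contradiction: assume $T_\ast<\infty$ but the quantity in \eqref{qq6} stays bounded, say $\int_0^{T_\ast}\|\div u\|_{L^\infty}\,dt\le M<\infty$. The goal is then to propagate every norm listed in \eqref{q5} up to $t=T_\ast$ uniformly in $T<T_\ast$. Once that is done, we may take $(\rho,u,\te)(\cdot,T_\ast-\ep)$ as initial data in Theorem \ref{th1}; the local existence then extends the solution strictly past $T_\ast$, contradicting maximality. The hypothesis \eqref{qq6} enters at the very first step and is reused throughout via Gronwall.

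The first step is the pointwise density bound. Integrating the continuity equation along the flow gives $\rho(X(t),t)=\rho_0(X(0))\exp\bigl(-\int_0^t\div u\,ds\bigr)$, hence $\|\rho\|_{L^\infty}\le\|\rho_0\|_{L^\infty}e^M\le C$. The basic energy identity obtained by testing the momentum equation with $u$ and the temperature equation by $1$ yields $\|\sqrt{\rho}u\|_{L^2}^2+c_\upsilon\int\rho\te\,dx$ bounded by data, and control of $\int_0^T\|\na u\|_{L^2}^2\,dt$. The weighted density estimates on $\bar x^a\rho$ are transported analogously to the local existence proof once the velocity enjoys the bounds produced below, using Hardy/weighted $L^q$ inequalities and the $L^\infty(0,T^\ast;L^{q_1})$ control of $u$.

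The main obstacle is the first-order $(u,\te)$ estimate, where one must bound $\|\na u\|_{L^\infty_tL^2}+\|\sqrt{\rho}\,u_t\|_{L^2_tL^2}$ and $\|\na\te\|_{L^\infty_tL^2}+\|\sqrt\rho\,\te_t\|_{L^2_tL^2}$ without any a priori bound on $\te$. I would test the momentum equation with $u_t$, integrate by parts, and handle the pressure term $\int P\,\div u_t$ by moving $\p_t$ onto $P$ and using $P_t=-\div(Pu)-R\rho\te\,\div u+R\rho\te_t-\text{(lower order)}$ (via the continuity and temperature equations). The crucial cancellation is that the bad term $\int R\rho\te\,\div u\cdot\div u$ is controlled by $\|\div u\|_{L^\infty}\int R\rho\te\,dx$, which is integrable in time thanks to the contradiction hypothesis and the energy bound on $\int\rho\te$. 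The effective viscous flux $F=(2\mu+\lambda)\div u-P$ and the vorticity $\omega=\curl u$, which satisfy elliptic equations with right-hand sides in divergence form, then give an $L^2_tL^\infty$-type control of $\na u$ modulo $\div u$, sufficient to absorb the $\mu|\mathfrak{D}(u)|^2+\lambda(\div u)^2$ terms in the energy equation.

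With the previous step in hand, the temperature estimate proceeds by testing the $\te$-equation with $\te_t$, producing $\tfrac{\ka}{2}\tfrac{d}{dt}\|\na\te\|_{L^2}^2+c_\upsilon\|\sqrt\rho\,\te_t\|_{L^2}^2$ against Gronwall-admissible quantities built from $\|\na u\|_{L^\infty_tL^2}$, $\|\div u\|_{L^\infty}$, and $\|\na u\|_{L^4}^4$ (controlled by $L^\infty\cdot L^2$ interpolation and the just-obtained bounds). Higher-order estimates on $\na^2 u$, $\na^2\te$, the weighted bound $\na u\,\bar x^{\al/2}\in L^\infty_tL^2$, and the $L^\infty_tL^{q_1}$-bound on $u$ are then reproduced by repeating the arguments used in Theorem \ref{th1}, now that all coefficients appearing on the right-hand sides are uniformly bounded on $[0,T)$ with $T<T_\ast$. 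This delivers the full regularity \eqref{q5} up to $T_\ast$, completing the contradiction and establishing \eqref{qq6}.
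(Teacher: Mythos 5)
Your contradiction framework is the same as the paper's, and the first two steps (the $L^\infty$ bound on $\rho$ from the continuity equation and the basic energy identity giving $\|\sqrt\rho u\|_{L^2}$, $\int\rho\te\,dx$, and $\int_0^T\|\na u\|_{L^2}^2dt$) match Lemmas \ref{le00}--\ref{le55}. However, the proposal skips the two points that constitute the actual content of the proof, and as written the Gronwall argument for the first-order estimate does not close.

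First, the differential inequality for $\|\na u\|_{L^2}^2+\|\sqrt\rho\te\|_{L^2}^2$ unavoidably carries a factor $\|u\|_{L^\infty}^2$ on the right-hand side: it comes from $\int\rho|u\cdot\na u|^2dx\le\|u\|_{L^\infty}^2\|\na u\|_{L^2}^2$ when you pass between $u_t$ and $\dot u$, and again from the $u\te$-test used to absorb $\int|\na u|^2\te\,dx$. In $\mathbb{R}^2$ the quantity $\|u\|_{L^\infty}$ is \emph{not} controlled by $\|\na u\|_{L^2}$ (this is exactly the obstruction distinguishing the Cauchy problem from the bounded-domain result of \cite{wy}), and it is not integrable in time a priori. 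The paper's resolution occupies most of Lemma \ref{le7}: re-derive the weighted bound $\|\na u|x|^{\beta}\|_{L^2}\le C(\|\na u\|_{L^2}+\|\sqrt\rho\dot u\|_{L^2}+\|\sqrt\rho\te\|_{L^2}+\|\na\te\|_{L^2})$ via the cancellation $\int\rho\dot u\,dy=0$ and the good/bad kernel decomposition, apply Caffarelli--Kohn--Nirenberg to get $u\in L^{q_3}$, upgrade to $W^{1,q_3}$ via Lemma \ref{w6}, invoke the Brezis--Wainger inequality \eqref{lo1}, and finally close with the log-Gronwall inequality $xy\le C(x\ln(e+x)+e^y)$, since the resulting bound on $\|u\|_{L^\infty}^2$ still involves the unknown quantities $\|\sqrt\rho\dot u\|_{L^2}$ and $\|\na\te\|_{L^2}$ that sit on the dissipation side. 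Your ``$L^2_tL^\infty$-type control of $\na u$ modulo $\div u$'' from $G$ and $\curl u$ neither supplies $\|u\|_{L^\infty}$ nor is available at this stage (it would require $\rho\dot u\in L^2_tL^p$ for some $p>2$, which is a later estimate), and citing the $L^\infty(0,T^\ast;L^{q_1})$ bound from Theorem \ref{th1} is circular, since that bound holds only up to the local existence time and is precisely what must be re-proved uniformly on $[0,T_\ast)$.

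Second, the coupling term $\int F(\na u)\te\,dx$ (equivalently $\int|\na u|^2\te\,dx$) in the $\|\sqrt\rho\te\|_{L^2}$ energy estimate is nowhere addressed; it cannot be bounded by $\|\div u\|_{L^\infty}$ and the energy alone. The paper handles its first occurrence by multiplying the momentum equation by $u\te$ (estimate \eqref{c4}), which converts it into terms absorbable by $\epsilon(\|\na\te\|_{L^2}^2+\|\sqrt\rho\dot u\|_{L^2}^2)$ plus $(\|\div u\|_{L^\infty}+\|u\|_{L^\infty}^2)$ times the controlled energy, and its second occurrence (in $B_3$ of Lemma \ref{a113.4}) by the weighted bound $\|\na u\bar x^{\al/4}\|_{L^2}$, again traceable to the cancellation property. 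Without these two ingredients the first-order estimate, and hence the entire continuation argument, fails.
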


\begin{remark}
Theorem \ref{th1} is the first result concerning the local existence of strong solutions with vacuum to the Cauchy problem of 2D full compressible Navier-Stokes system. As is well-known, compared to the 3D Cauchy problem, the essential difficulty for 2D case is  lack of the integrability of velocity and temperature. Here, by virtue of the techniques of harmonic analysis and some delicate analysis, we solve this problem and obtain the integrability of velocity and temperature successfully, which is a major breakthrough in comparison to the result in \cite{liliang} for isentropic flow. In fact, adopting the method in this paper, we can get the similar result for isentropic flow.
\end{remark}

\begin{remark}
If the initial data satisfy additional regularity, the strong solution obtained in Theorem \ref{th1} becomes a classical one.
\end{remark}


We now comment on the analysis of this paper.
For the local existence theory, as indicated in \cite{K2,liliang}, the methods for bounded domains in $\r^2$ are not suitable for the  Cauchy problem, since the $L^p$-norms of $u$ and $\te$ are unavailable for some $p\geq1$. To overcome  this difficulty caused by the unbounded domain, we mainly adopt the idea in \cite{liliang}.  If initial density $\n_0$ decays for large $|x|$,  $\n(\cdot,t)$  decays at the same rate, which leads to   $\n u$ and $\n\te$  decay faster than $u$ and $\te$ themselves. Then,  making use of the key Poincar$\acute{e}$ inequality (see Lemmas \ref{w3}, \ref{w5}), $u\bar x^{-\eta}$ and $\te\bar x^{-\eta}$ $(\eta>0)$  will be bounded instead of $u$ and $\te$. 

Next, compared to the isentropic case,  the main difficulty lies in dealing with the terms only involving the temperature and the dissipative function $F(\na u)$ given in \eqref{poi}, for example, $\int F(\na u)\te dx$. In fact, these terms require that $\te$ or $\na u$ must have faster decay rate. Here, we consider that  $|\na u|\bar x^k\,(k>0)$ is controlled, which plays a crucial role in this paper. To do so, we take a  two-level approximation strategy. First, considering the approximation system \eqref{a1} with a damping term $\de \te$, we extend the standard classical solutions  in bounded domains to the whole space. 
Then 
we turn to the key step of the paper: the analysis on the whole space. 

Our central observation is the following cancellation property: 
\begin{equation}\la{q8}
\int_{\mathbb{R}^2}(\rho\dot{u})(y,t)dy=0,
\end{equation}
which is only available in $\mathbb{R}^2$, and  will lead to the extra order of decay on $\nabla u$.

To be precise, based on the structure of \eqref{a1}$_2$ and the classical theory of singular integrals (refers to Stein \cite{stein1}), we  divide $\na u$ into the ``good'' part and the ``bad'' part:
\be\notag\p_i u\triangleq \mathcal{T}_i P+K_i\ast (\n\dot u),\quad i=1,2,\ee
 where in ``good'' part, $\mathcal{T}_i=-\frac{1}{2\mu+\lambda}\p_i(-\Delta)^{-1}\na\,(i=1,2)$ is the singular integral operator of convolution type with Calderon-Zygmund kernel which is bounded in $L^p\,(1<p<\infty)$ and the ``bad'' part is in the form of 
\begin{equation}\la{q9}
\int_{\mathbb{R}^2}K_i(x-y)(\rho\dot{u})(y,t)dy,
\end{equation}
with the  kernel $K_i(x)$ satisfying
\be\notag|K_i(x)|\le C|x|^{-1},\quad|\na K_i(x)|\le C|x|^{-2},\quad i=1,2.\ee
With the help of the well-known $A_p$ weights theory (see Stein \cite{stein}), the ``good'' part has suitable decay rate. While the ``bad'' part seems to be troublesome. Fortunately, making use of the cancellation condition \eqref{q8}, the ``bad" part \eqref{q9}  can turn into
$$\int_{\mathbb{R}^2} (K_i(x-y)-K_i(x))(\rho\dot{u})(y,t)dy,$$
where the singularity at infinity is actually canceled out and
 we thus succeed  in obtaining  the desired higher decay rate independent of $\de$ of the ``bad'' part (see \eqref{k1}). This combined with the estimate on ``good'' part guarantees  the weighted-$L^2(\r^2)$ estimate on $\na u$.


Moreover, using this key estimate and  Caffarelli-Kohn-Nirenberg inequality (see Lemma \ref{w4}), we can arrive at the integrability of velocity and other necessary estimates.
With all the desired a priori estimates independent $\de$ at hand, by letting $\de$ tend to zero, the standard compactness arguments then show the local existence of strong solutions to the Cauchy problem \eqref{q1} \eqref{q2} \eqref{q3}. In addition, the method in Lemma \ref{lee0} for velocity implies the integrability of temperature as well.

For the blowup criterion, the key issue is to estimate the $L^\infty_tL^2_x$-norm of the gradient of velocity. After standard calculations, the problem is turned into estimating the $L^2_tL^\infty_x$-norm of velocity (see \eqref{c3}). Unlike the case of the bounded domains in \cite{wy}, where the maximum of velocity $u$ is easily bounded by  $\na u$ according to the Sobolev inequality,  the Cauchy problem is more complicated. Indeed, by using the approach in Lemma \ref{lee0} again and some delicate estimates, we can control the maximum of velocity by some suitable terms.

The rest of this paper is organized as follows: Section \ref{sec2} collects  some basic facts and inequalities  needed in later analysis. In Section \ref{sec3},  based on the approximate  solutions to the Cauchy problem of \eqref{a1} and some uniform a priori estimates (with respect to $\de$), we show the first main result Theorem \ref{th1}. In Section \ref{sec4},  the blowup criterion Theorem \ref{th2} is proved. Section \ref{sec5} is  devoted to deriving the strong solutions to the Cauchy problem of  approximation system \eqref{a1}.

\section{Preliminaries}\la{sec2}
The whole analysis of Theorem \ref{th1} is based on the following approximation system with a damping of $\te$:
\be \la{a1}  \begin{cases}
\n_t+{\rm div} (\n u)=0,\\
\n (u_t+ u\cdot \na u)-\mu\Delta u-(\mu+\lambda)\na{\rm div}u+\na P=0, \\
c_\upsilon\n (\te_t+u\cdot \na \te)-F(\na u)+P\div u-\ka\Delta\te+\de \te=0,
\end{cases}\ee
where $\de$ is a positive constant  and
\be\la{poi} F(\na u)\triangleq 2\mu |\mathfrak{D}(u)|^2+\lambda (\div u)^2.\ee

To begin with, we consider the initial-boundary-value problem to \eqref{a1} on $\O\times(0,T]$ with $T>0$ and $B_r\triangleq\{x\in\r^2||x|<r\}\,(r>0)$.
Indeed,  imposing \eqref{a1} the initial data:
\be\la{bbbb} (\n,u,\te)(x,0)=(\n_0,u_0,\te_0), \quad x\in\O,\ee
and boundary conditions:
\be\la{bbb} u=0,\quad \na\te\cdot n=0 \quad \mbox{ on }\pa B_r,\ee
 we  have the following  local existence result of classical solutions  with strictly positive initial density to \eqref{a1}--\eqref{bbb}, which can be proved by the similar way to that in \cite{K2}.
\begin{lemma}\la{w1}
Assume  that
 $(\n_0,u_0,\te_0)$ satisfy
 \be \la{2.1}
\inf\limits_{x\in\O}\n_0(x)> 0,\,\,(\n_0,\te_0)\in H^3(\O),\,\,u_0\in H_0^1(\O)\cap H^3(\O),\,\,\na\te_0\cdot n=0 \, \mbox{ on }\pa B_r.\ee
Then there exist  a small time $T_r>0$ and a unique classical solution $(\rho,u,\te)$ to the initial-boundary-value problem  (\ref{a1})--(\ref{bbb})    on $\O\times(0,T_r]$ satisfying
\be\la{mn6}
  \inf\limits_{(x,t)\in\O\times (0,T_r]}\n(x,t)\ge \frac{1}{2}\inf\limits_{x\in\O}\n_0(x),  \ee and
 \be\la{mn5}
 \begin{cases}
 ( \rho, u, \te) \in C([0,T_r];H^3(\O)),\quad \n_t\in C([0,T_r];H^2(\O)),\\
  (u_t,\te_t)\in C([0,T_r];H^1(\O)),\quad (u,\te)\in L^2(0,T_r;H^4(\O)).\end{cases}\ee
 \end{lemma}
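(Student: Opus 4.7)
The plan is to construct $(\rho, u, \theta)$ by a standard linearized iteration, following the approach of \cite{K2} for heat-conducting compressible flow on bounded domains. Because $\inf_{B_r}\rho_0 > 0$ and the initial data lie in $H^3(B_r)$ with compatible Dirichlet/Neumann boundary conditions, the momentum equation (for $u$) and the energy equation (for $\theta$) become uniformly parabolic on a short time interval, so the classical $L^p$ theory for second-order parabolic systems on smooth bounded domains is applicable. The damping term $\delta\theta$ is a harmless nonnegative zeroth-order perturbation.

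I would set up the iteration as follows. Given $(\rho^k, u^k, \theta^k)$, define $\rho^{k+1}$ by solving the linear transport equation $\rho^{k+1}_t + \mathrm{div}(\rho^{k+1} u^k) = 0$ with datum $\rho_0$; then define $u^{k+1}$ as the solution of the linear parabolic system obtained from \eqref{a1}$_2$ by freezing $u^k$ in the convection term and inserting $(\rho^{k+1},\theta^k)$ into the pressure, with Dirichlet condition $u^{k+1}|_{\partial B_r}=0$ and datum $u_0$; and finally define $\theta^{k+1}$ as the solution of the linear parabolic equation obtained from \eqref{a1}$_3$ by inserting $(\rho^{k+1}, u^{k+1}, \theta^k)$ into the convective and source terms, with Neumann condition $\nabla\theta^{k+1}\cdot n|_{\partial B_r}=0$ and datum $\theta_0$. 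The iteration is initialized with $(\rho_0, u_0, \theta_0)$.

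Next I would derive uniform-in-$k$ bounds on a short interval $[0, T_r]$ whose length depends only on $\|(\rho_0, u_0, \theta_0)\|_{H^3(B_r)}$ and $\inf\rho_0$. The density estimate \eqref{mn6} follows from the Lagrangian form of the transport equation once $\|\mathrm{div}\,u^k\|_{L^\infty}$ is controlled on $[0, T_r]$; differentiating the transport equation in space and time yields uniform $C([0,T_r];H^3)$ and $C([0,T_r];H^2)$ bounds on $\rho^{k+1}$ and $\rho^{k+1}_t$ respectively. Standard parabolic $H^3$-in-space, $H^1$-in-time estimates for $u^{k+1}$ and $\theta^{k+1}$ then follow from the regularity theory for linear parabolic problems on $B_r$ with Dirichlet/Neumann data; the quadratic source $F(\nabla u^{k+1})$ in the energy equation is absorbed via Sobolev embedding in 2D, and compatibility at $t=0$ (implicit in \eqref{2.1}) guarantees the stated regularity up to the initial instant, yielding \eqref{mn5}.

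To pass to the limit I would show the sequence is Cauchy in a weaker norm, e.g.\ in $C([0,T_r];L^2)\cap L^2(0,T_r;H^1)$, by deriving a contraction estimate for the differences $(\rho^{k+1}-\rho^k, u^{k+1}-u^k, \theta^{k+1}-\theta^k)$ and applying Gronwall's inequality on $[0, T_r]$ (shrinking $T_r$ if necessary). Interpolation with the uniform $H^3$ bound then produces a classical solution $(\rho, u, \theta)$ with the regularity \eqref{mn5}, and uniqueness is obtained by applying the same contraction argument to two hypothetical solutions. The principal obstacle I anticipate is the joint treatment of the Dirichlet condition on $u$ and the Neumann condition on $\theta$ while keeping full $H^3$ regularity up to $\partial B_r$; this requires careful verification of the compatibility relations at $\partial B_r\times\{0\}$ that are encoded in \eqref{2.1}, and it also forces the highest-order estimates for $\theta^{k+1}$ to be performed via tangential-normal decomposition near the boundary rather than by a plain integration by parts.
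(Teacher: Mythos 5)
The paper does not give a proof of this lemma; it simply states that it ``can be proved by the similar way to that in \cite{K2}.'' Your proposal, a linearized iteration with transport for $\rho$, linear parabolic systems for $u$ and $\theta$, uniform short-time $H^3$ bounds, and a contraction in a weaker norm, is exactly the Cho--Kim scheme the paper invokes, so it is the same approach and is correct in outline.
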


As a direct consequence of Lemma \ref{w1},  the local  strong solutions to the Cauchy problem of approximation system \eqref{a1}  is established by the standard extension method, whose proof is postponed to the appendix.
\begin{lemma}\la{th3}
Under the conditions of Theorem \ref{th1} except $\te_0\geq 0$, assume further that $\te_0\in L^2(\r^2)$.
Then for any fixed $\de>0$, there exists a positive time  $T_\de$ such that the Cauchy problem (\ref{a1}) (\ref{q2}) (\ref{q3}) has a unique strong solution $(\n\geq 0,u,\te)$ on $\r^2\times(0,T_\de]$ satisfying 
\be\ba\la{bb}
\begin{cases}
\bar x^{a}\n\in L^\infty(0,T_\de;L^1(\r^2)\cap H^1(\r^2)\cap W^{1,q}(\r^2)),
\\\na u\in L^\infty(0,T_\de;H^1(\r^2))\cap L^2(0,T_\de;W^{1,q}(\r^2)),
\\\te\in L^\infty(0,T_\de;H^1(\r^2))\cap L^2(0,T_\de;H^2(\r^2)),
\\ \sqrt{\n}u,\,\sqrt{\n}\te,\,\sqrt{\n}u_t,\,\sqrt t\sqrt{\n}\te_t,\,\sqrt t\na^2\te\in L^\infty(0,T_\de;L^2(\r^2)),
\\ \na u_{t},\,\sqrt\n\te_t,\,\sqrt t\na\te_t\in L^2(\r^2\times(0,T_\de)),\quad\sqrt t\na^2\te\in L^2(0,T_\de;H^{1}(\r^2)).
\end{cases}
\ea\ee
\end{lemma}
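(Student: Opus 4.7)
The strategy is the standard domain-exhaustion/regularization scheme: construct classical solutions to (\ref{a1}) on balls $B_r$ via Lemma \ref{w1}, push uniform a priori estimates, and pass $r\to\infty$ together with a regularization parameter $\eta\to 0$ that lifts the density off vacuum. For fixed $\delta>0$ the damping term $\delta\theta$ is available to close the temperature estimates in $L^2(\r^2)$, so the Cauchy problem is considerably more forgiving than the $\delta=0$ case handled in the main theorem.

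The first step is to regularize the initial data. Given $(\rho_0,u_0,\theta_0)$ satisfying (\ref{q4}) and the compatibility condition (\ref{co1}), I would define $\re=(\rho_0+\eta e^{-|x|^2})\chi_r * j_{1/n}$ (for a cutoff to $B_r$ and a standard mollifier) so that $\re\ge\eta/2>0$ on $B_r$, $\re\to\rho_0$ in the norms of (\ref{q4}), and $\bar x^{a}\re$ is uniformly bounded in $L^1\cap H^1\cap W^{1,q}$. The velocity $\ue\in H_0^1(B_r)\cap H^3(B_r)$ is obtained by solving a Lam\'e-type elliptic problem whose right-hand side is $\re^{1/2}g_\eta - R\nabla(\re\tet)$ with $g_\eta\to g$ in $L^2$, guaranteeing a discrete compatibility identity $-\mu\Delta\ue-(\mu+\lambda)\nabla\div\ue+R\nabla(\re\tet)=\re^{1/2}g_\eta$. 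The temperature $\tet$ is a smooth truncation/mollification of $\theta_0$ with $\nabla\tet\cdot n=0$. Lemma \ref{w1} then yields a classical solution $(\rd,\ud,\td)$ on $B_r\times(0,T_{r,\eta}]$ with the regularity (\ref{mn5}).

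The heart of the proof is a suite of a priori bounds independent of $r$ and $\eta$ (but allowed to depend on $\delta$). The basic energy identity, together with the damping $\delta\td$ and the initial condition $\te_0\in L^2$, gives control of $\sqrt{\rd}\ud$, $\sqrt{\rd}\td$ in $L^\infty_tL^2$ and $\sqrt\delta\,\td$ in $L^2_tL^2$. Transporting the weight $\bar x^a$ through the continuity equation (testing against $\bar x^a$, $\nabla(\bar x^a\rd)$, etc.) and using that $|\nabla\bar x^a|\le C\bar x^{a-1}\log^{1+\eta_0}(e+|x|^2)$ yields the weighted density bounds in (\ref{bb}), modulo a Gr\"onwall inequality driven by $\|\nabla\ud\|_{L^\infty}$. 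Testing (\ref{a1})$_2$ by $\ud_t$ and using the compatibility at $t=0$ produces $\nabla\ud\in L^\infty_tH^1\cap L^2_tW^{1,q}$ and $\sqrt{\rd}\ud_t\in L^\infty_tL^2$, $\nabla\ud_t\in L^2_tL^2$ by a Hoff-type argument; testing (\ref{a1})$_3$ by $\td_t$, then differentiating and testing with $\td_t$ (with the $\sqrt t$ weight that forgives the lack of an initial $H^2$ bound on $\td$), controls $\nabla\td\in L^\infty_tL^2\cap L^2_tH^1$, $\sqrt t\nabla^2\td$ and $\sqrt t\sqrt{\rd}\td_t$ in $L^\infty_tL^2$, and $\sqrt t\nabla\td_t\in L^2_tL^2$. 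Throughout, pointwise bounds on $\rd$ are recovered from the transport equation via Feng-Hoff-type arguments, and the Caffarelli-Kohn-Nirenberg and weighted Poincar\'e inequalities (Lemmas \ref{w3}, \ref{w4}, \ref{w5}) are used to convert weighted density information into $L^p$ control of $\ud$ and $\td$ needed to absorb nonlinear terms.

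Finally, I would pass to the limit. Extending $(\rd,\ud,\td)$ by zero to $\r^2$ (for density; by their own trace for the other variables, as their bounds are uniform in $r$), the uniform bounds and standard Aubin-Lions compactness yield a subsequence converging to $(\hn,\hu,\he)$ on $\r^2\times(0,T_\delta]$ satisfying (\ref{a1}), (\ref{q2}), (\ref{q3}) in the strong sense and enjoying the regularity (\ref{bb}); the far-field decay (\ref{q3}) follows from the weighted density bounds and $L^p$ integrability of $\hu,\he$. Uniqueness is a standard energy estimate on the difference of two solutions, using the compatibility of the velocity equation and the fact that $\hn$ is bounded. The main technical obstacle will be to close the weighted density-velocity estimates with constants independent of $r$ and $\eta$: each weighted bound ends up coupled to $\int_0^T\|\nabla\ud\|_{L^\infty}dt$, and the chain of Gr\"onwall inequalities must be arranged so that all dependence on $(r,\eta)$ drops out, giving a common lower bound $T_\delta>0$ on the lifespan.
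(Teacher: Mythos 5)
Your proposal follows essentially the same route as the paper's appendix: approximate on balls $B_r$ with the density lifted off vacuum (the paper folds your parameter $\eta$ into $r^{-1}e^{-|x|^2}$), solve a Lam\'e-type elliptic problem to preserve the compatibility condition, derive a priori estimates uniform in $r$ that exploit the damping term $\de\te$ and $\te_0\in L^2$ to control $\int F(\na u)\te\,dx$ directly (so the weighted gradient/cancellation machinery of Lemma \ref{lee0} is not needed at this stage), and pass to the limit by cutoff and compactness. The plan is correct and matches the paper's argument in structure and in the key observation that the $\de$-damping makes the temperature estimates close in $L^2(\r^2)$.
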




 Next,  the following Poincar$\acute{e}$-type inequality can be found in \cite{L1}.
\begin{lemma}\la{w3}
For $m\in[2,\infty)$ and $\zeta\in(1+m/2,\infty)$, there exists a positive constant $C=C(m,\zeta)$ such that for either $\Omega=\r^2$ or $\Omega=\O$ with $r\geq 1$ and for any $v\in \tilde D^{1,2}(\Omega)\triangleq\{f\in H^1_{loc}|\na f\in L^2(\Omega)\}$,
\be\la{e2}\left(\int_\Omega\frac{|v|^m}{e+|x|^2}(\log(e+|x|^2))^{-\zeta}dx\right)^{1/m}\le C\|v\|_{L^2(B_1)}+C\|\na v\|_{L^2(\Omega)}.\ee
\end{lemma}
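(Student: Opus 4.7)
My plan is to reduce the weighted $L^m$ estimate to a dyadic annular decomposition of $\Omega$, apply the scale-invariant two-dimensional Sobolev--Poincar\'e inequality on each ring, and then telescope the annular means back to $B_1$; the hypothesis $\zeta>1+m/2$ should enter only at the final summation of the telescoped bounds.

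Concretely, I would set $A_0=\Omega\cap B_1$ and $A_k=\{2^{k-1}\le|x|<2^k\}\cap\Omega$ for $k\ge 1$, and write $\phi(x):=(e+|x|^2)^{-1}(\log(e+|x|^2))^{-\zeta}$, so that $\phi\le C\,2^{-2k}k^{-\zeta}$ and $|A_k|\le C\,2^{2k}$ on $A_k$. The contribution of $A_0$ is immediate from the two-dimensional embedding $H^1(B_1)\hookrightarrow L^m(B_1)$. On each ring $A_k$ with $k\ge 1$, the scale-invariant Sobolev--Poincar\'e inequality yields
\begin{equation*}
\|v-v_{A_k}\|_{L^m(A_k)} \le C_m\,2^{2k/m}\,\|\nabla v\|_{L^2(A_k)},
\end{equation*}
where $v_{A_k}$ denotes the mean of $v$ over $A_k$. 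Multiplying by $\phi$ and raising to the $m$-th power leads to
\begin{equation*}
\int_{A_k}|v|^m\phi\,dx \le C\,k^{-\zeta}\|\nabla v\|_{L^2(A_k)}^m + C\,k^{-\zeta}|v_{A_k}|^m.
\end{equation*}
For $m\ge 2$, the bound $\|\nabla v\|_{L^2(A_k)}^m\le\|\nabla v\|_{L^2(\Omega)}^{m-2}\|\nabla v\|_{L^2(A_k)}^2$ together with $\sum_k\|\nabla v\|_{L^2(A_k)}^2\le\|\nabla v\|_{L^2(\Omega)}^2$ controls the first sum by $C\|\nabla v\|_{L^2(\Omega)}^m$ with no restriction on $\zeta$.

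For the second sum, I would telescope: on the enlarged ring $\tilde A_k=\{2^{k-1}<|x|<2^{k+1}\}\cap\Omega$, which contains both $A_{k-1}$ and $A_k$ and has diameter $\sim 2^k$, Poincar\'e gives $|v_{A_k}-v_{A_{k-1}}|\le C\|\nabla v\|_{L^2(\tilde A_k)}$. A Cauchy--Schwarz telescope combined with the finite-overlap property $\sum_j\|\nabla v\|_{L^2(\tilde A_j)}^2\le C\|\nabla v\|_{L^2(\Omega)}^2$ then produces $|v_{A_k}|\le C\|v\|_{L^2(B_1)}+Ck^{1/2}\|\nabla v\|_{L^2(\Omega)}$, and taking the $m$-th power reduces the summation to $\sum_k k^{-\zeta}$ plus $\sum_k k^{m/2-\zeta}$; convergence of the latter is equivalent to $\zeta>1+m/2$, exactly the hypothesis. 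Adding the $A_0$ contribution and taking the $1/m$ root yields \eqref{e2}, and the case $\Omega=B_r$ with $r\ge 1$ follows verbatim by truncating the dyadic sum at $k\sim\log_2 r$.

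The main obstacle I expect is the telescoping control of the means: the naive bound $|v_{A_k}|\lesssim k\|\nabla v\|_{L^2(\Omega)}$ coming from the triangle inequality alone would force the stricter threshold $\zeta>1+m$, so one must exploit the $\ell^2$-summability of $\|\nabla v\|_{L^2(\tilde A_k)}$ through Cauchy--Schwarz to obtain the sharp $k^{1/2}$ growth that matches $\zeta>1+m/2$. Everything else is standard localized two-dimensional Sobolev--Poincar\'e together with the geometric decay of the weight $2^{-2k}k^{-\zeta}$.
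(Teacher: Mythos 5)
The paper does not prove Lemma \ref{w3} itself; it is cited from Lions [L1], so there is no internal proof to compare against. Your dyadic annular decomposition, scale-invariant Sobolev--Poincar\'e on each ring, and Cauchy--Schwarz telescoping of the ring averages constitute a correct and self-contained argument, and this is also the standard route to weighted Poincar\'e inequalities of this type on $\mathbb{R}^2$. You correctly isolate the sharp mechanism: the $\ell^2$-summability $\sum_j\|\nabla v\|_{L^2(\tilde A_j)}^2\le C\|\nabla v\|_{L^2(\Omega)}^2$ over the overlapping rings upgrades the naive triangle-inequality bound $|v_{A_k}|\le Ck\|\nabla v\|_{L^2}$ to $|v_{A_k}|\le C\|v\|_{L^2(B_1)}+Ck^{1/2}\|\nabla v\|_{L^2(\Omega)}$, which is exactly what lowers the threshold on $\zeta$ from $1+m$ to $1+m/2$.

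Two small points need tightening. With $A_k=\{2^{k-1}\le|x|<2^k\}$, your enlarged ring $\tilde A_k=\{2^{k-1}<|x|<2^{k+1}\}$ contains $A_k$ and $A_{k+1}$, \emph{not} $A_{k-1}$; you should take $\tilde A_k=\{2^{k-2}<|x|<2^k\}$ (or shift indices accordingly) so that the Poincar\'e inequality on $\tilde A_k$ actually controls $|v_{A_k}-v_{A_{k-1}}|$. Second, for $\Omega=B_r$ the phrase ``truncating the dyadic sum at $k\sim\log_2 r$'' leaves a residual shell $\{2^K\le|x|<r\}$ with $2^K\le r<2^{K+1}$ that can be arbitrarily thin, so its rescaled Sobolev--Poincar\'e constant is not uniformly bounded. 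The clean fix is to absorb the residual into the preceding ring, i.e.\ to take the terminal annulus to be $\{2^{K-1}\le|x|<r\}$; after rescaling by $1/r$ its inner-to-outer radius ratio lies in $(1/4,1/2]$, so the constant stays uniform. With these corrections the argument is complete.
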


Now, we obtain the following key lemma as a result of Lemma \ref{w3}, which provides the weighted $L^p$-estimate for functions in $\tilde D^{1,2}$.
\begin{lemma}\la{w5}
Let $\bar x$ and $\eta_0$ be as in Theorem \ref{th1} and $\Omega$ as in Lemma \ref{w3}. Assume $\n\in L^\infty(\Omega)$ and $\int_{B_{N}}\n dx\geq M$ for some positive constants  $M$ and $1\le N \le r$. Then there exists a positive constant $C=C(N,M,\eta_0)$  such that for any $v\in \tilde D^{1,2}(\Omega)$,
\be\la{e3}\|\bar x^{-1}v\|_{L^2(\Omega)}\le C\|\sqrt\n v\|_{L^2(\Omega)}+C(\|\n\|_{L^\infty(\Omega)}^{1/2}+1)\|\na v\|_{L^2(\Omega)}.\ee
Moreover, for any $\epsilon >0$ and $0<\eta\le 1$, there exists a positive constant $C=C(N,M,\eta_0,\epsilon,\eta)$  such that for any $v\in \tilde D^{1,2}(\Omega)$,
\be\la{e5}\|\bar x^{-\eta}v\|_{L^{(2+\epsilon)/\eta}(\Omega)}\le C\|\sqrt\n v\|_{L^2(\Omega)}+C(\|\n\|_{L^\infty(\Omega)}^{1/2}+1)\|\na v\|_{L^2(\Omega)}.\ee
\end{lemma}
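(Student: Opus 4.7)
The plan is to reduce \eqref{e3} and \eqref{e5} to the weighted $L^p$-bounds provided by Lemma \ref{w3}, and then to absorb the $\|v\|_{L^2(B_1)}$-term on its right-hand side into the required norms via the density hypothesis $\int_{B_N}\n\,dx\ge M$.

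The first step is the auxiliary bound
\begin{equation*}
\|v\|_{L^2(B_1)}\le C\|\sqrt{\n}\,v\|_{L^2(\Omega)}+C(\|\n\|_{L^\infty(\Omega)}^{1/2}+1)\|\na v\|_{L^2(\Omega)}
\end{equation*}
with $C=C(N,M)$. Setting $\bar v_N=|B_N|^{-1}\int_{B_N}v\,dx$, I would combine the Poincar\'e inequality $\|v-\bar v_N\|_{L^2(B_N)}\le C(N)\|\na v\|_{L^2(\Omega)}$ with the estimate
\begin{equation*}
M|\bar v_N|^2\le \int_{B_N}\n|\bar v_N|^2\,dx\le 2\|\sqrt{\n}\,v\|_{L^2}^2+2\|\n\|_{L^\infty}\|v-\bar v_N\|_{L^2(B_N)}^2
\end{equation*}
coming from the density hypothesis together with $(a+b)^2\le 2a^2+2b^2$, and conclude via $\|v\|_{L^2(B_1)}\le\|v-\bar v_N\|_{L^2(B_N)}+|B_N|^{1/2}|\bar v_N|$.

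Estimate \eqref{e3} then follows from Lemma \ref{w3} applied with $m=2$ and $\zeta=2(1+\eta_0)$: the condition $\zeta>1+m/2$ holds because $\eta_0>0$, and the weight $(e+|x|^2)^{-1}\log^{-2(1+\eta_0)}(e+|x|^2)$ is exactly $\bar x^{-2}$. For \eqref{e5}, I set $p=(2+\epsilon)/\eta$ and fix any $\zeta>1+p/2$; Lemma \ref{w3} then controls $\int (e+|x|^2)^{-1}\log^{-\zeta}(e+|x|^2)|v|^p\,dx$, so the crux is the pointwise comparison
\begin{equation*}
\frac{\bar x^{-\eta p}(x)}{(e+|x|^2)^{-1}\log^{-\zeta}(e+|x|^2)}=(e+|x|^2)^{-\epsilon/2}\log^{\zeta-(2+\epsilon)(1+\eta_0)}(e+|x|^2),
\end{equation*}
which is uniformly bounded on $\Omega$ because the strictly negative polynomial exponent $-\epsilon/2$ dominates any fixed power of the logarithm (the function $u\mapsto u^{-\epsilon/2}\log^\beta u$ attains its maximum at $u^\ast=e^{2\beta/\epsilon}$ and decays at infinity).

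The main obstacle I expect is \eqref{e5} for small $\eta$: the admissibility requirement $\zeta>1+(2+\epsilon)/(2\eta)$ forces $\zeta$ to be arbitrarily large, so a priori the log exponent $\zeta-(2+\epsilon)(1+\eta_0)$ becomes large and positive. The rescue is that $p=(2+\epsilon)/\eta$ is strictly greater than the endpoint exponent $2/\eta$, which produces the free polynomial decay $(e+|x|^2)^{-\epsilon/2}$ that beats every fixed power of $\log(e+|x|^2)$ and so makes the comparison robust for every admissible $\zeta$, at the price of the constant $C$ in \eqref{e5} depending on $\eta$ as recorded in the statement.
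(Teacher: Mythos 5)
Your proposal is correct and follows essentially the same route as the paper: the auxiliary bound on $\|v\|_{L^2(B_1)}$ via the Poincar\'e inequality and the density lower bound $\int_{B_N}\n\,dx\ge M$ is exactly the paper's estimate \eqref{e4}, and the reduction of \eqref{e3} and \eqref{e5} to Lemma \ref{w3} (which the paper states without detail) is carried out correctly, including the pointwise comparison of $\bar x^{-\eta p}$ with the weight in \eqref{e2} where the surplus polynomial decay $(e+|x|^2)^{-\epsilon/2}$ absorbs the logarithmic factor for every admissible $\zeta$.
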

\begin{proof}
Denote $\bar v\triangleq \frac{1}{|B_{N}|}\int_{B_{N}} v dx$. By virtue of Poincar$\acute{e}$ inequality, one has
\bnn\ba\int_{B_{N}}\n dx |\bar v|^2
&\le 2\int_{B_{N}}\n|v|^2dx+2\int_{B_{N}}\n|v-\bar v|^2dx\\
&\le 2\|\sqrt\n v\|_{L^2(B_{N})}^2+2\|\n\|_{L^\infty(B_{N})}\|v-\bar v\|_{L^2(B_{N})}^2\\
&\le 2\|\sqrt\n v\|_{L^2(\Omega)}^2+C\|\n\|_{L^\infty(\Omega)}\|\na v\|_{L^2(\Omega)}^2,\\
\ea\enn
which leads to
\be\la{e4}\ba
\|v\|_{L^2(B_{N})}&\le\|v-\bar v\|_{L^2(B_{N})}+C|\bar v|\\
&\le C \|\na v\|_{L^2(B_{N})}+C\|\sqrt\n v\|_{L^2(\Omega)}+C\|\n\|_{L^\infty(\Omega)}^{1/2}\|\na v\|_{L^2(\Omega)}\\
&\le C\|\sqrt\n v\|_{L^2(\Omega)}+C(\|\n\|_{L^\infty(\Omega)}^{1/2}+1)\|\na v\|_{L^2(\Omega)}.
\ea\ee
Combining \eqref{e4} with  \eqref{e2} yields \eqref{e3} and \eqref{e5} directly. The proof of Lemma \ref{w5} is completed.
\end{proof}

Furthermore, in order to derive the integrability of velocity, the following  Caffarelli-Kohn-Nirenberg inequality \cite{ckn,cw} is needed.
\begin{lemma}\la{w4}
For any   $v\in C_0^\infty(\r^2)$, there exists a positive constant $C=C(k,l)$ such that
\be\notag\|v|x|^l\|_{L^{p}(\r^2)}\le C\|\na v|x|^k\|_{L^{2}(\r^2)},\ee
where $0<k<\infty$,  $k-1\le l< k$, and $p=2/(k-l)$.
\end{lemma}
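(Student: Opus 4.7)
My plan is to exploit the scaling-invariance of the inequality by performing a logarithmic change of variables that recasts it as a Sobolev embedding on the cylinder $\r \times S^1$. The algebraic engine will be the relation $lp + 2 = kp$, which is just the scaling identity $p = 2/(k-l)$ in disguise.

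I would first reduce to $v \in C_c^\infty(\r^2 \setminus \{0\})$; the general $C_0^\infty(\r^2)$ case follows by cutting off near the origin, whose cost is $O(\epsilon^{2k}) \to 0$ in $\|\na v |x|^k\|_{L^2}^2$ because $k > 0$. Next, pass to polar coordinates $x = r\omega$, substitute $r = e^t$, and set
$$H(t,\omega) := v(e^t\omega)\, e^{kt}, \qquad (t,\omega) \in \r \times S^1.$$
Using $dx = e^{2t}\,dt\,d\omega$ and $|\na v|^2 = |\pa_r v|^2 + r^{-2}|\pa_\omega v|^2$, a direct computation---where $lp+2=kp$ is the decisive cancellation on the LHS---yields
\begin{align*}
\int_{\r^2} |v|^p |x|^{lp}\,dx &= \int_{\r \times S^1}|H|^p\,dt\,d\omega,\\
\int_{\r^2} |\na v|^2 |x|^{2k}\,dx &= \int_{\r \times S^1}\bigl(|\pa_t H - kH|^2 + |\pa_\omega H|^2\bigr)\,dt\,d\omega.
\end{align*}
Since $H$ has compact support (in $\omega$ trivially; in $t$ because $v$ vanishes near $0$ and near $\infty$), an integration by parts in $t$ eliminates the cross term:
$$\int_{\r \times S^1}|\pa_t H - kH|^2\,dt\,d\omega = \|\pa_t H\|_{L^2(\r \times S^1)}^2 + k^2\|H\|_{L^2(\r \times S^1)}^2.$$
Because $k>0$, the right-hand side of the target inequality is thus equivalent to $\|H\|_{H^1(\r \times S^1)}^2$.

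The inequality now reduces to the Sobolev embedding $H^1(\r \times S^1) \hookrightarrow L^p(\r \times S^1)$ for every $p \in [2,\infty)$, and this range matches exactly the hypothesis $k-1 \le l < k$. The main obstacle is justifying this cylinder embedding: one cannot appeal to critical Sobolev in 2D, since $\dot H^1(\r^2) \not\hookrightarrow L^p$ for any finite $p>2$. I would prove it by hand as follows: for each fixed $t$, the one-dimensional Sobolev embedding $H^1(S^1) \hookrightarrow L^\infty(S^1)$ gives $\|H(t,\cdot)\|_{L^\infty(S^1)} \le C\|H(t,\cdot)\|_{H^1(S^1)}$, and interpolating with $L^2(S^1)$ yields
$$\|H(t,\cdot)\|_{L^p(S^1)} \le C\|H(t,\cdot)\|_{L^2(S^1)}^{2/p}\|H(t,\cdot)\|_{H^1(S^1)}^{1-2/p}.$$
Raising to the $p$th power, integrating in $t$, and using the trace-type bound $\sup_t\|H(t,\cdot)\|_{L^2(S^1)}^2 \le 2\|H\|_{L^2(\r \times S^1)}\|\pa_t H\|_{L^2(\r \times S^1)}$ (obtained from $\pa_t\|H(t,\cdot)\|_{L^2(S^1)}^2 = 2\int_{S^1} H\pa_t H\,d\omega$ and Cauchy--Schwarz, since $H$ vanishes at $t=\pm\infty$) closes the estimate after one application of Young's inequality; equivalently, this is the standard Gagliardo--Nirenberg inequality on the complete 2D Riemannian manifold $\r\times S^1$ of bounded geometry. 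Chaining with the identities above delivers the desired CKN bound, and a density argument from $C_c^\infty(\r^2\setminus\{0\})$ to $C_0^\infty(\r^2)$ finishes the proof.
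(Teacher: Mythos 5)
The paper itself offers no proof of this lemma: it is quoted directly from Caffarelli--Kohn--Nirenberg \cite{ckn} and Catrina--Wang \cite{cw}. Your argument therefore supplies what the paper only cites, and the route you choose --- the Emden--Fowler change of variables $r=e^{t}$, $H=e^{kt}v(e^{t}\omega)$, converting the weighted inequality into the embedding $H^{1}(\r\times S^{1})\hookrightarrow L^{p}(\r\times S^{1})$ --- is precisely the transformation on which \cite{cw} is built, so it is the standard modern proof rather than an exotic one. All the computations check out: the identity $lp+2=kp$ is exactly $p=2/(k-l)$; the cutoff near the origin costs $O(\epsilon^{2k})\to 0$ on the right because $k>0$, and is harmless on the left because $lp+2=kp>0$ makes $|x|^{lp}$ locally integrable; the cross term vanishes by parts since $H$ is compactly supported in $t$ after the reduction; and $k>0$ is exactly what makes $\|\pa_{t}H\|_{L^{2}}^{2}+k^{2}\|H\|_{L^{2}}^{2}+\|\pa_{\omega}H\|_{L^{2}}^{2}$ equivalent to $\|H\|_{H^{1}(\r\times S^{1})}^{2}$, with the hypothesis $k-1\le l<k$ translating into $p\in[2,\infty)$.

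The one soft spot is your hands-on justification of $H^{1}(\r\times S^{1})\hookrightarrow L^{p}$. The slicing gives $\|H(t)\|_{L^{p}(S^{1})}^{p}\le C\|H(t)\|_{L^{2}(S^{1})}^{2}\|H(t)\|_{H^{1}(S^{1})}^{p-2}$, and after extracting $\sup_{t}\|H(t)\|_{L^{2}(S^{1})}^{2}\le 2\|H\|_{L^{2}}\|\pa_{t}H\|_{L^{2}}$ you are left with $\int_{\r}\|H(t)\|_{H^{1}(S^{1})}^{p-2}\,dt$, of which only the second moment is controlled by $\|H\|_{H^{1}(\r\times S^{1})}^{2}$; so ``one application of Young's inequality'' closes the argument only for $p\le 4$ (or $p\le 6$ with the sharper interpolation $\|f\|_{L^{\infty}(S^{1})}\le C\|f\|_{L^{2}(S^{1})}^{1/2}\|f\|_{H^{1}(S^{1})}^{1/2}$). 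For general $p$ either bootstrap in $p$, or --- cleaner --- cover the cylinder by unit cells $Q_{n}=[n,n+1]\times S^{1}$, use the uniform embedding $\|H\|_{L^{p}(Q_{n})}\le C\|H\|_{H^{1}(Q_{n})}$ on each, and sum via $\bigl(\sum_{n}a_{n}^{p/2}\bigr)^{2/p}\le\sum_{n}a_{n}$ for $p\ge2$. With that replacement (or simply the appeal to the standard Gagliardo--Nirenberg inequality on a manifold of bounded geometry, which you already name), your proof is complete and correct.
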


Denote the material derivative of $f$,  effective viscous flux, and  vorticity respectively by
\be\la{hj1}\dot f\triangleq f_t+u\cdot\na f,\quad G\triangleq(2\mu+\lambda)\div u-P,\quad \curl u\triangleq\p_2 u_1-\p_1 u_2,\ee
then it follows from $(\ref{q1})$ that $G$ and $\curl u$ satisfy
 \be\la{h13}
\Delta G = \div (\rho\dot{u}),\quad\mu \Delta \curl u =\nabla^\perp\cdot(\rho\dot{u})\quad \text{in}\,\,\r^2, \ee
with $\na^\perp\triangleq (\p_2,-\p_1)$.
Applying the  standard $L^p$-estimate  to   elliptic equations (\ref{h13}), we thus get   the following elementary estimates (see \cite{lx}).

\begin{lemma} \la{w6}
Let $(\rho,u,\te)$ be a strong solution of (\ref{q1}) (\ref{q2})  (\ref{q3}).
Then for any $p\in [2,\infty),$ there exists a generic positive constant $C$ depending only on $\mu,$   $\lambda,$   $R$, and $p$ such that 
\be\la{h18}\|\na u \|_{L^p(\r^2)} \le C (\|G\|_{L^p(\r^2)}  +\|\curl u\|_{L^p(\r^2)}+ \|\n\te\|_{L^p(\r^2)}),\ee
\be\la{h19}\|\nabla G\|_{L^p(\r^2)} + \|\nabla \curl u\|_{L^p(\r^2)} \le C\|\rho\dot{u}\|_{L^p(\r^2)},\ee
\be\la{h20}\|G\|_{L^p(\r^2)}+\|\curl u\|_{L^p(\r^2)}\le C \|\rho\dot{u}\|_{L^2(\r^2)}^{(p-2)/p }\left(\|{\nabla u}\|_{L^2(\r^2)}
   +  \|\n\te\|_{L^2(\r^2)}\right)^{2/p}.\ee
\end{lemma}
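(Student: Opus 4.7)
The plan is to derive all three estimates from the elliptic identities \eqref{h13} via Calder\'on--Zygmund theory combined with one application of the 2D Gagliardo--Nirenberg interpolation.

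For \eqref{h18}, I would use the pointwise identity $\dl u=\na\div u-\na^\perp\curl u$ on $\r^2$. Inverting the Laplacian yields $\na u=\na^2\dl^{-1}\bigl(\na\div u-\na^\perp\curl u\bigr)$, so $\na u$ is expressed as compositions of Riesz transforms applied to $\div u$ and $\curl u$. Since such operators are bounded on $L^p(\r^2)$ for $1<p<\infty$, this gives $\|\na u\|_{L^p}\le C(\|\div u\|_{L^p}+\|\curl u\|_{L^p})$. Substituting $\div u=(G+R\n\te)/(2\mu+\lm)$ from the definition of $G$ in \eqref{hj1} then produces \eqref{h18}.

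For \eqref{h19}, I would read \eqref{h13} as $\dl G=\div(\n\dot u)$ and $\mu\dl\curl u=\na^\perp\cdot(\n\dot u)$. Since the strong-solution hypotheses together with the far-field decay \eqref{q3} place $G$ and $\curl u$ in $L^2(\r^2)$, a Liouville-type argument forces the harmonic part of the inversion to vanish, so $\na G=\na\dl^{-1}\div(\n\dot u)$ and the analogue for $\na\curl u$ hold. These are compositions of two Riesz transforms acting on the components of $\n\dot u$, hence $L^p$-bounded for $1<p<\infty$, yielding \eqref{h19}.

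Finally, \eqref{h20} follows by interpolation. I would apply the 2D inequality
\[\|f\|_{L^p(\r^2)}\le C\|f\|_{L^2(\r^2)}^{2/p}\|\na f\|_{L^2(\r^2)}^{(p-2)/p},\qquad p\in[2,\infty),\]
with $f=G$ and $f=\curl u$. The $L^2$-controls $\|G\|_{L^2}\le C(\|\na u\|_{L^2}+\|\n\te\|_{L^2})$ and $\|\curl u\|_{L^2}\le C\|\na u\|_{L^2}$ are immediate from the definitions, and the corresponding gradient factors are bounded by $C\|\n\dot u\|_{L^2}$ via the $p=2$ case of \eqref{h19}. The only real obstacle is the justification of the Laplacian inversion in Step 2, which requires confirming via the decay and integrability built into the notion of strong solution that no nontrivial harmonic contribution survives; all other steps are routine applications of classical $L^p$-bounds for singular integrals.
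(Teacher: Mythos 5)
Your argument is correct and is exactly the standard route the paper takes: it derives \eqref{h18}--\eqref{h20} by applying Calder\'on--Zygmund $L^p$-estimates to the elliptic equations \eqref{h13} (citing Li--Xin for details), with \eqref{h20} obtained from the 2D Gagliardo--Nirenberg interpolation between the $L^2$-bound on $(G,\curl u)$ and the gradient bound \eqref{h19}, just as you propose. The only cosmetic point is the sign convention in $\Delta u=\na\div u+\na^\perp\curl u$ with the paper's definition $\curl u=\p_2u_1-\p_1u_2$, which does not affect the estimates.
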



Next, based on the Brezis-Wainger's inequality \cite{bg2}, we  arrive at the following improved logarithmic Sobolev inequality, referring  \cite{ot} for the detailed proof, which  gives the key $L^\infty_tL^2_x$-norm of $\na u$  in Section \ref{sec4}.
\begin{lemma}\la{w8}
For any $p\in(2,\infty)$, there exists a positive constant $C$ depending only on $p$ such that for $f\in \tilde D^{1,2}(\r^2)\cap W^{1,p}(\r^2),$   
\be\la{lo1}\|f\|_{L^\infty(\r^2)}^2\le C(1+\|\na f\|_{L^2(\r^2)}^2\ln (e+\|f\|_{W^{1,p}(\r^2)})).\ee
\end{lemma}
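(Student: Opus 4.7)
The plan is to prove the equivalent one-sided bound
\[
\|f\|_{L^\infty(\r^2)}\le C + C\|\na f\|_{L^2(\r^2)}\sqrt{\ln(e+\|f\|_{W^{1,p}(\r^2)})},
\]
whose square yields \eqref{lo1}. I would work with the symmetric decreasing rearrangement $f^*$ of $|f|$, regarded as a radial function on $\r^2$. By the P\'olya--Szeg\H{o} inequality, $\|\na f^*\|_{L^q(\r^2)}\le\|\na f\|_{L^q(\r^2)}$ for each $q\in[1,\infty]$, and equimeasurability gives $\|f^*\|_{L^q}=\|f\|_{L^q}$ together with $\|f\|_{L^\infty}=f^*(0)$. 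Because $p>2$, Morrey's embedding $W^{1,p}\hookrightarrow C^{0,1-2/p}$ makes $f$ (hence $f^*$) continuous, while $f\in L^p$ forces $f^*(r)\to 0$ as $r\to\infty$; the polar identities $\|\na f^*\|_{L^q(\r^2)}^q=2\pi\int_0^\infty|(f^*)'(r)|^q r\,dr$ and $f^*(0)=-\int_0^\infty(f^*)'(r)\,dr$ are then standard.

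Next I would split the above integral at two scales $0<\ep<R$ and pair $\na f$ with different weights at different scales. On $[0,\ep]$, H\"older against the weight $r^{1/p}$ yields
\[
\int_0^\ep|(f^*)'(r)|\,dr\le\Bigl(\int_0^\ep|(f^*)'|^p r\,dr\Bigr)^{1/p}\Bigl(\int_0^\ep r^{-1/(p-1)}\,dr\Bigr)^{(p-1)/p}\le C\|\na f\|_{L^p}\,\ep^{(p-2)/p},
\]
where the second factor is finite precisely because $p>2$. On $[\ep,R]$, the analogous H\"older against $r^{1/2}$ produces the key logarithmic factor
\[
\int_\ep^R|(f^*)'(r)|\,dr\le\Bigl(\int_\ep^R|(f^*)'|^2 r\,dr\Bigr)^{1/2}\Bigl(\int_\ep^R r^{-1}\,dr\Bigr)^{1/2}\le C\|\na f\|_{L^2}\sqrt{\ln(R/\ep)},
\]
while the tail is controlled by Chebyshev applied to $(f^*)^p$, namely $\int_R^\infty|(f^*)'|\,dr=f^*(R)\le(\pi R^2)^{-1/p}\|f\|_{L^p}$. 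Choosing $\ep=(1+\|\na f\|_{L^p})^{-p/(p-2)}$ and $R=(1+\|f\|_{L^p})^{p/2}$ makes the first and third contributions $O(1)$ and gives $\ln(R/\ep)\le C\ln(e+\|f\|_{W^{1,p}})$; summing the three bounds and squaring completes the proof.

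The step I expect to require the most care is the justification of the representation $f^*(0)=-\int_0^\infty(f^*)'\,dr$ together with the two polar-form P\'olya--Szeg\H{o} inequalities used above, since both rely on the absolute continuity of $f^*$ on $(0,\infty)$. Here that absolute continuity follows from the Morrey regularity provided by $p>2$, but in a rigorous writeup one would first reduce to $f\in C_c^\infty(\r^2)$ by density and then pass to the limit. Once the rearrangement framework is in place, the rest is the elementary one-variable H\"older computation above, with the two weights $r^{1/p}$ and $r^{1/2}$ chosen precisely so that the small-scale integral is absorbed by $\|\na f\|_{L^p}$ while the mid-scale integral produces the logarithm times $\|\na f\|_{L^2}$.
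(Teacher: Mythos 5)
Your argument is correct, and it is genuinely different from what the paper does: the paper gives no proof of Lemma \ref{w8} at all, citing Brezis--Wainger \cite{bg2} and Ozawa \cite{ot}, whose arguments run through sharp constants in Gagliardo--Nirenberg/Sobolev embeddings (an $L^q$ bound growing like $\sqrt{q}$, optimized over $q$) rather than through rearrangement. Your route via the symmetric decreasing rearrangement reduces everything to a one-variable computation and makes completely transparent where the logarithm comes from (the $\int_\ep^R r^{-1}\,dr$ factor) and why $p>2$ is needed (integrability of $r^{-1/(p-1)}$ at the origin, which is exactly the Morrey threshold); the price is the technical overhead of P\'olya--Szeg\H{o} and the absolute continuity of $f^*$, which you correctly identify and which is handled by the density reduction you sketch. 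The pieces all check out: the two H\"older applications carry the right exponents, the Chebyshev tail bound $f^*(R)\le(\pi R^2)^{-1/p}\|f\|_{L^p}$ is valid, and your choices of $\ep$ and $R$ satisfy $\ep\le 1\le R$ and render the first and third terms $O(1)$ while keeping $\ln(R/\ep)\le C\ln(e+\|f\|_{W^{1,p}})$. One small point to make explicit in a full writeup: since $f\in\tilde D^{1,2}\cap W^{1,p}$ need not lie in $L^2$, the $L^2$ P\'olya--Szeg\H{o} inequality must be invoked in its homogeneous form for functions vanishing at infinity (level sets of finite measure, guaranteed here by $f\in L^p$), not in the $W^{1,2}$ form; this is standard but worth citing precisely.
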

Finally, we state the  Beale-Kato-Majda type inequality (see \cite{bkm,k}), which will be used to estimate the gradient of the density. 
\begin{lemma}\label{lem-bkm}
For  any $p\in(2,\infty)$, assume that $\na u\in L^2(\r^2)\cap W^{1,p}(\r^2)$, then there is a positive constant  $C=C(p)$ such that  the following estimate holds
\bnn\ba
\|\na u\|_{L^\infty(\r^2)}\le& C\left(\|{\rm div}u\|_{L^\infty(\r^2)}+\|\curl u\|_{L^\infty(\r^2)} \right)\log(e+\|\na^2u\|_{L^p(\r^2)})\\
&+C\|\na u\|_{L^2(\r^2)} +C .
\ea\enn
\end{lemma}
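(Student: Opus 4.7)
My plan is to represent $\na u$ as a Calder\'on--Zygmund singular integral acting on $\div u$ and $\curl u$, and then to perform a three-region split of the kernel (inner, middle, outer) so as to absorb each piece by a different norm on the right-hand side. Starting from the identity $\Delta u = \na\div u + \na^\perp\curl u$ in $\r^2$ (which is the vector analogue of the elliptic identities $(\ref{h13})$), inverting the Laplacian via the 2D Newtonian potential gives
\be\notag
\p_i u = T_i^1(\div u)+T_i^2(\curl u),\quad i=1,2,
\ee
where each $T_i^j$ has a kernel $K(x)$ of the form $\p_k\p_\ell G$ with $G=-(2\pi)^{-1}\log|x|$. Explicit computation shows $|K(x)|\le C|x|^{-2}$ together with the cancellation property $\int_{|z|=r}K(z)\,d\si=0$ for all $r>0$. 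These two properties are what drive the estimates below.

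Fixing $x\in\r^2$ and two scales $0<\ep<R<\infty$ to be chosen, for $f\in\{\div u,\curl u\}$ I would split
\be\notag
T(f)(x)=\int_{|y-x|<\ep}K(x-y)f(y)\,dy+\int_{\ep\le|y-x|\le R}K(x-y)f(y)\,dy+\int_{|y-x|>R}K(x-y)f(y)\,dy,
\ee
and treat each term as follows. The inner integral uses the cancellation of $K$ to replace $f(y)$ by $f(y)-f(x)$; since $p>2$, Morrey's embedding gives the pointwise H\"older bound $|f(y)-f(x)|\le C|y-x|^{1-2/p}\|\na f\|_{L^p}$, which after integration yields $C\ep^{1-2/p}\|\na^2 u\|_{L^p}$. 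The middle integral is bounded in absolute value by $C(\|\div u\|_{L^\infty}+\|\curl u\|_{L^\infty})\log(R/\ep)$, by virtue of $\int_{\ep\le|z|\le R}|z|^{-2}dz\le C\log(R/\ep)$. For the outer integral, Cauchy--Schwarz combined with $\|K\|_{L^2(|z|>R)}\le CR^{-1}$ produces $CR^{-1}\|f\|_{L^2}\le CR^{-1}\|\na u\|_{L^2}$.

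The conclusion follows upon fixing $R=1$ and choosing $\ep=(e+\|\na^2 u\|_{L^p})^{-p/(p-2)}$: the inner term becomes $O(1)$ while $\log(R/\ep)\le C\log(e+\|\na^2 u\|_{L^p})$, so summing the three contributions yields exactly the claimed inequality. The only delicate point I anticipate is the degenerate regime in which $\|\na^2 u\|_{L^p}$ is so small that the chosen $\ep$ exceeds $R=1$; there, the Morrey embedding for $W^{1,p}(\r^2)$ together with the $L^2$-control of $\na u$ directly bounds $\|\na u\|_{L^\infty}$ by $C\|\na u\|_{L^2}+C\|\na^2 u\|_{L^p}+C$, which fits inside the right-hand side. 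Apart from this case bookkeeping, there is no substantial obstacle, as the argument is the classical BKM-type scale decomposition driven by the Calder\'on--Zygmund structure of the Newtonian potential.
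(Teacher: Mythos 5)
Your argument is correct and is exactly the classical Beale--Kato--Majda/Kato scale decomposition that the paper itself does not reprove but simply cites (\cite{bkm,k}): the representation of $\na u$ as Calder\'on--Zygmund integrals of $\div u$ and $\curl u$, the inner/middle/outer split with the cancellation of the kernel plus Morrey's inequality on the inner ball, the logarithmic count on the annulus, and Cauchy--Schwarz at infinity. Note only that with $\ep=(e+\|\na^2u\|_{L^p(\r^2)})^{-p/(p-2)}$ one automatically has $\ep\le e^{-p/(p-2)}<1=R$, so the degenerate case you flag never occurs, and the pointwise multiple of $f(x)$ arising from differentiating the Newtonian potential twice is harmlessly absorbed by $\|\div u\|_{L^\infty(\r^2)}+\|\curl u\|_{L^\infty(\r^2)}$.
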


\section{Proof of Theorem \ref{th1}}\la{sec3}
In this section, we mainly concentrate on the analysis in the whole space. Theorem \ref{th1} thus is proved by adopting the following strategy that based on the strong solutions to the Cauchy problem of approximation system with the damping term $\de\te$ obtained in Lemma \ref{th3}, we first  establish some necessary a priori estimates in Subsection \ref{sec3.2}, which are independent of $\de$, then take appropriate approximate limits by letting  $\de$ go to zero in Subsection \ref{sec3.3}. The limiting functions are the  desired strong solutions.

\subsection{Uniform a priori estimates}\la{sec3.2}
In this subsection, we establish all the necessary a priori estimates, which are uniform with respect to $\de$, for  the strong solutions $(\n^\de,u^\de,\te^\de)$ of the Cauchy problem (\ref{a1}) (\ref{q2}) (\ref{q3})  obtained in Lemma \ref{th3}.

First, for any fixed $\de>0$, let the initial data   $(\n_0^\de\geq 0,u_0^\de,\te_0^\de)$ satisfy \eqref{q4}, \eqref{co1}, with $(\n_0,u_0,\te_0)$ being replaced by  $(\n_0^\de,u_0^\de,\te_0^\de)$, 
and $\te_0^\de\in L^2$.
Without loss of generality, we assume that there exists a positive constant $N_1\geq 1$ such that
\be\la{oh}\int_{B_{N_1}}\n_0^\de dx\geq 1/2.\ee
Then Lemma \ref{th3} implies the Cauchy problem (\ref{a1}) (\ref{q2}) (\ref{q3}) admits a unique strong solution $(\n^\de,u^\de,\te^\de)$ satisfying \eqref{bb} on $\r^2\times(0,T_\de]$ for some $T_\de>0$.   For the  simplicity of notations, we omit the superscript $\de$ in the remaining subsection. 

For $q,\,a,\,\bar x$, and $\eta_0$ as in Theorem \ref{th1}, define
\be\ba\la{bb0}
\psi(t)\triangleq \psi_1(t)+\psi_2(t)+\psi_3(t)+\sup_{0\le s\le t}\|\na(\bar{x}^{a}\n)\|_{L^{q}},
\ea\ee
with
\be\la{b0}
\psi_1(t)\triangleq
1+\sup_{0\le s\le t}\left(\|\sqrt{\n} u\|_{L^2}^2+\|\na u\|_{L^2}^2+\|\sqrt{\n}\te\|_{L^2}^2\right),
\ee
\be\la{bq0}
\psi_2(t)\triangleq 1+\sup_{0\le s\le t}\left(\|\bar{x}^{a}\n\|_{L^{1}\cap L^{\infty}}+\|\na(\bar{x}^{a}\n)\|_{L^{2}}\right),
\ee
\be\la{bp0}
\psi_3(t)\triangleq 1+\sup_{0\le s\le t}\left(\|\sqrt{\n}u_t\|_{L^2}^2+\|\na\te\|_{L^2}^2+\de\|\te\|_{L^2}^2 \right)
.
\ee

Now we state the following crucial a priori estimate on $\psi$.
\begin{pro}\la{pr2}
For any fixed $\de>0$, let the initial data $(\n_0\geq 0,u_0,\te_0)$ satisfy (\ref{q4})
, (\ref{co1}), (\ref{oh}), and $\te_0\in L^2$, assume that $(\n,u,\te)$ is the strong solution to the Cauchy problem (\ref{a1}) (\ref{q2}) (\ref{q3}) on $\r^2\times(0,T_\de]$ obtained in Lemma \ref{th3}. We define
\be\la{cx} T_\de^\ast= \sup\left\{T\le T_\de\left|\sup_{0\le t\le T}\|\sqrt\n u\|_{L^2}^2\le M_1\right.\right\},\ee
for some positive constant $M_1$ determined later, which is  depending only on $\mu$,  $\lambda$, $R,$ $c_\upsilon,\,\ka,\,q,\,a,\,\eta_0,$ $N_1$, and  $\psi_{0}$.
 Then there exist   positive constants  $T^\ast$ and $\tilde M_1$ depending only on $\mu$,  $\lambda$, $R,$ $c_\upsilon,\,\ka,\,q,\,a,\,\eta_0,$ $N_1$, and  $\psi_{0}$  
 but independent of $\de$ such that
\be\ba\la{d01}
\psi(T^\ast)\le M_1/2,
\ea\ee
and
\be\ba\la{d03}
&\sup_{0\le t\le T^\ast}(\|\na^2 u\|_{L^2}+\|\na u\bar x^{\al/2}\|_{L^{2}}+\|u\|_{L^{q_1}})\\
&+\int_0^{T^\ast}\left(\|\na u_t\|_{L^2}^2+\|\sqrt\n\te_t\|_{L^2}^2+\|\na^2\te\|_{L^2}^2+\|\na^2 u\|_{L^{q}}^2\right)dt\le \tilde M_1,
\ea\ee
where $\al\triangleq\min\{{a/2},1\}$,  $q_1=4/\al$, and
\be\la{000}\ba
\psi_0\triangleq&1+\|\bar{x}^{a}\n_0\|_{L^{1}\cap H^{1}\cap W^{1,q}}+\|\sqrt{\n_0}u_0\|_{L^2}^2+\|\sqrt{\n_0}\te_0\|_{L^2}^2\\
       &+\|\na u_0\|_{H^1}^2+\de\|\te_0\|_{L^2}^2+\|\na\te_0\|_{L^2}^2+\|g\|_{L^2}^2.\ea\ee
\end{pro}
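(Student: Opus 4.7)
The plan is to prove Proposition \ref{pr2} through a coupled sequence of a priori estimates, each controlled by Gronwall-type arguments on a small time interval, with $M_1$ chosen large relative to $\psi_0$ and $T^*$ chosen small relative to $M_1$. The bootstrap hypothesis $\|\sqrt{\n}u\|_{L^2}^2 \le M_1$ on $[0,T_\de^*]$ gives a uniform lower bound $\int_{B_{N_1}}\n\,dx \ge 1/4$ (after possibly enlarging $N_1$ and shrinking $T^*$, using mass conservation together with Lemma \ref{w5} applied to $u$), so Lemma \ref{w5} is available throughout.

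First I would establish the basic energy estimate: multiplying \eqref{a1}$_2$ by $u$ and \eqref{a1}$_3$ by $1$, integration by parts combined with the transport equation \eqref{a1}$_1$ yields $\|\sqrt\n u\|_{L^2}^2 + \int_0^t\|\na u\|_{L^2}^2\,ds \le C\psi_0 + C\int_0^t\|\sqrt\n\te\|_{L^2}^2\,ds$, plus an $L^1$ bound on $\n\te$ via the energy identity. Next, I would control the weighted density $\bar x^a\n$: from \eqref{a1}$_1$, $(\bar x^a\n)_t + u\cdot\na(\bar x^a\n) - a\bar x^a\n\,u\cdot\na\log\bar x = -\bar x^a\n\div u$, and integrating against $|\bar x^a\n|^{p-2}\bar x^a\n$ for $p\in\{1,2,q,\infty\}$ reduces the $L^p$ growth to controlling $\|u\bar x^{-1}\log(e+|x|^2)\|_{L^\infty\cap L^p}$, which Lemma \ref{w5} (with $\epsilon$ small) bounds by $\|\sqrt\n u\|_{L^2}+\|\na u\|_{L^2}$ up to a log factor absorbed by $\eta_0$.

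The main obstacle is the weighted gradient estimate $\|\na u\bar x^{\al/2}\|_{L^2}$, which is the heart of the paper. Following the outlined strategy, I decompose $\p_i u = \mathcal T_i P + K_i\ast(\n\dot u)$. For the ``good'' part, since $\bar x^{\al}$ is an $A_2$-weight for $\al<1$ and an approximate $A_2$-weight otherwise (modulo the logarithm in $\bar x$), the Calderón--Zygmund operator $\mathcal T_i$ is bounded on $L^2(\bar x^\al dx)$, giving $\|\mathcal T_i P\,\bar x^{\al/2}\|_{L^2}\le C\|\n\te\,\bar x^{\al/2}\|_{L^2}$, which is controlled via the weighted density bound and the temperature $L^p$ estimates. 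For the ``bad'' part I exploit \eqref{q8}: writing
\begin{equation*}
(K_i\ast(\n\dot u))(x) = \int_{\mathbb{R}^2}\bigl(K_i(x-y)-K_i(x)\bigr)(\n\dot u)(y,t)\,dy,
\end{equation*}
and splitting into $|y|\le |x|/2$, $|x|/2\le|y|\le 2|x|$, and $|y|\ge 2|x|$, the mean value bound $|\na K_i|\lesssim |x|^{-2}$ on the first region produces an extra decay factor $|y|/|x|$, the middle region is handled by the $L^2(dx)$ theory, and the last region is handled by the kernel bound $|K_i|\lesssim|x|^{-1}$ together with Cauchy--Schwarz; balancing these produces the desired decay independent of $\de$, namely $\|K_i\ast(\n\dot u)\,\bar x^{\al/2}\|_{L^2}\le C\|\sqrt\n\dot u\|_{L^2}^\theta(\cdots)^{1-\theta}$ for some $\theta$, by the $A_p$ theory together with Lemma \ref{w5} applied to $\dot u$.

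With the weighted $\na u$ estimate in hand, I would close the higher regularity by differentiating \eqref{a1}$_2$ in $t$, multiplying by $u_t$, and using \eqref{h18}--\eqref{h20} from Lemma \ref{w6} to convert $L^p$ norms of $\na u$ and $\na^2 u$ into $\|\sqrt\n\dot u\|_{L^2}$ plus temperature terms; the damping $\de\te$ and the temperature equation, tested against $\te$ and then $\te_t$, give $\|\na\te\|_{L^2}$ and $\|\sqrt\n\te_t\|_{L^2}$ bounds, noting that the Joule-heating $F(\na u)\te$ term is controlled by the weighted $\na u$ estimate combined with Lemma \ref{w4} applied to $\te$. The $L^{q_1}$-norm of $u$ then follows from Lemma \ref{w4} with $k=\al/2$, $l = k - 2/q_1$, giving $\|u\|_{L^{q_1}}\le C\|\na u\,\bar x^{\al/2}\|_{L^2}$, and the $L^2_t L^{q_2}_x$-bound on $\te$ is obtained analogously using the $H^1$-control of $\na\te$ together with the weighted machinery. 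Finally, assembling all inequalities gives $\psi(t)\le C\psi_0 + C(M_1) t^\beta$ for some $\beta>0$, so choosing $M_1 = 4C\psi_0$ (independent of $\de$) and $T^*=T^*(M_1)$ sufficiently small yields both \eqref{d01} and \eqref{d03}.
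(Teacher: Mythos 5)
Your proposal follows essentially the same route as the paper: the bootstrap on $\|\sqrt\n u\|_{L^2}^2$ giving the mass lower bound and hence Lemma \ref{w5}, the decomposition $\p_i u=\mathcal T_iP+K_i\ast(\n\dot u)$ with the cancellation $\int\n\dot u\,dy=0$ and $A_2$-weight theory to get the weighted $L^2$ bound on $\na u$, the Caffarelli--Kohn--Nirenberg inequality for $\|u\|_{L^{q_1}}$, the time-differentiated momentum and temperature equations for $\psi_3$, and the final Gronwall-type closure $\psi\le C_1(1+\int_0^T\psi^A dt)$ with $M_1=4C_1$ and $T^\ast$ small. The only deviations (a three-region rather than two-region splitting of the bad part, and working with the weight $\bar x^{\al}$ directly instead of the pure power weights $|x|^{2\beta}$, $\beta<\tilde\al$, away from the origin) are cosmetic.
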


In the following subsection, let   $A$, $C$, and $C_1$ denote some generic constants depending only on $\mu,\,\lambda,\,R,\,c_\upsilon,\,\ka$, $q$, $a$, $\eta_0$, $N_1$, and $\psi_0$, but independent of  $\de$, and we write $C(\eta)$ to emphasize that the constant  depends on $\eta$ as well.

In order to prove Proposition \ref{pr2}, next our  goal is to establish  the following weighted $L^2$-norm on $\na u$, which is the key  to controlling the most difficult term $\frac{d}{dt}\int F(\na u)\te dx$ while estimating $\psi_3$ and further obtaining all the uniform estimates with respect to $\de$. The method  here is motivated by the techniques in the theory of singular integrals.

\begin{lemma}\la{lee0}
Under the conditions of Proposition \ref{pr2}, there exist  positive constants  $C$  and $T_1=T_1(  M_1,N_1,\|\n_0\|_{L^1})$ independent of $\de$ such that for any $0<T\le T_1$,
\be\la{hii}\ba
\|\na u\bar x^{\al/2}\|_{L^2}+\|u\|_{L^{q_1}}\le C\psi_1\psi_2^{3/2}\psi_3^{1/2},
\ea\ee
with  $ \al\triangleq\min\{{a/2},1\}$ and $ q_1=4/\al.$  Moreover,
\be\la{term}
\left|\int F(\na u)\te dx\right|
\le C\psi_1^2\psi_2^2\psi_3^{7/8}.
\ee
\end{lemma}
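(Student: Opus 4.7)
My plan is to decompose $\na u$ via the momentum equation as outlined in the introduction, then control the two resulting pieces by weighted Calder\'on--Zygmund theory and by the cancellation identity $\int_{\r^2}\n\dot u\,dy=0$. Starting from $\mu\Delta u+(\mu+\lambda)\na\div u=\na P+\n\dot u$, standard elliptic inversion gives $\p_i u=\mathcal T_i P+K_i*(\n\dot u)$, where $\mathcal T_i=-\frac{1}{2\mu+\lambda}\p_i(-\Delta)^{-1}\na$ is a Calder\'on--Zygmund operator and the kernel $K_i$ satisfies $|K_i(x)|\le C|x|^{-1}$, $|\na K_i(x)|\le C|x|^{-2}$. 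Since $\al\in(0,1]$, the weight $\bar x^\al$ is, up to harmless logarithmic factors, an $A_2$ Muckenhoupt weight on $\r^2$; Stein's weighted boundedness theorem then yields
\[
\|\mathcal T_i P\,\bar x^{\al/2}\|_{L^2}^2\le C\int P^2\bar x^\al\,dx\le C\|\n\bar x^a\|_{L^\infty}\|\sqrt\n\te\|_{L^2}^2\le C\psi_1\psi_2,
\]
where I use $\al\le 1<a$ together with $\bar x\ge 1$ to absorb the factor $\bar x^{\al-a}$.

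For the ``bad'' part, integrating the momentum equation over $\r^2$ and using the decay furnished by the class~\eqref{bb} gives $\int_{\r^2}\n\dot u(y,t)\,dy=0$, so
\[
K_i*(\n\dot u)(x)=\int_{\r^2}\bigl(K_i(x-y)-K_i(x)\bigr)(\n\dot u)(y)\,dy.
\]
I split the $y$-integral at $|y|=|x|/2$: on $\{|y|\le|x|/2\}$ the mean value theorem combined with $|\na K_i|\le C|x|^{-2}$ gives $|K_i(x-y)-K_i(x)|\le C|y|/|x|^2$, which is the crucial improvement on the naive $1/|x|$ bound; on $\{|y|>|x|/2\}$ the inequality $\bar x\le C\bar y$ lets me transfer the weight $\bar x^{\al/2}$ onto $\bar y^{\al/2}$. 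Writing $\n\dot u=\sqrt\n\cdot\sqrt\n\dot u$, applying Cauchy--Schwarz in $y$, and using $\n\dot u=\n u_t+\n u\cdot\na u$ together with $\|\sqrt\n u_t\|_{L^2}\le\psi_3^{1/2}$ and an interpolation estimate for $\|\sqrt\n u\cdot\na u\|_{L^2}$ built from $\sqrt\n u\in L^2$, $\na u\in L^2$, and $\n\bar x^a\in L^\infty$, I obtain
\[
\|K_i*(\n\dot u)\,\bar x^{\al/2}\|_{L^2}\le C\psi_1\psi_2^{3/2}\psi_3^{1/2},
\]
which dominates the good-part bound. The $L^{q_1}$-estimate on $u$ then follows immediately from Caffarelli--Kohn--Nirenberg (Lemma~\ref{w4}) with $k=\al/2$, $l=0$, and $p=4/\al=q_1$, using $|x|^{\al/2}\le\bar x^{\al/2}$.

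To estimate $|\int F(\na u)\te\,dx|\le C\int|\na u|^2|\te|\,dx$, I factor
\[
|\na u|^2|\te|=|\na u|^{2-\beta}\,(|\na u|\bar x^{\al/2})^\beta\,(|\te|\bar x^{-\al\beta/2}),
\]
and apply H\"older with three factors: an unweighted $L^{p_1}$-bound on $\na u$ coming from Lemma~\ref{w6} and $\|\na u\|_{L^2}\le\psi_1^{1/2}$; the weighted bound $\|\na u\bar x^{\al/2}\|_{L^2}\le C\psi_1\psi_2^{3/2}\psi_3^{1/2}$ just obtained; and the weighted Sobolev bound on $\te$ from Lemma~\ref{w5}, which gives $\|\bar x^{-\al\beta/2}\te\|_{L^{(2+\ep)\cdot 2/(\al\beta)}}\le C(\psi_1+\psi_3)^{1/2}$. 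Balancing the H\"older exponents so as to cover the target with $p_1<\infty$ forces $\beta=3/4$, and tracking the $\psi_3$-powers then gives $\beta/2+1/2=7/8$, the advertised exponent.

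The hardest part will be the bad-part estimate: the cancellation identity is essential because the naive bound $|K_i*(\n\dot u)(x)|\le C|x|^{-1}\|\n\dot u\|_{L^1}$ makes $\int|K_i*(\n\dot u)|^2\bar x^\al\,dx$ behave like $\int|x|^{\al-2}\,dx$, which diverges at infinity for every $\al>0$. Exploiting $\int\n\dot u=0$ to upgrade the decay to a rate integrable against $\bar x^\al$, while simultaneously tracking the precise powers $\psi_1\psi_2^{3/2}\psi_3^{1/2}$ rather than a looser combination, is what drives the dyadic splitting and the use of $\bar x^a\n\in L^\infty$ in the outer region.
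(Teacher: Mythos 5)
Your high-level plan matches the paper's proof exactly: decompose $\p_i u=\mathcal T_iP+K_i*(\n\dot u)$ via the momentum equation, control the Calder\'on--Zygmund piece by $A_p$ weights, use the cancellation $\int\n\dot u\,dy=0$ together with the mean-value theorem on $|y|\le|x|/2$ for the kernel piece, and finish $\eqref{hii}$ with Caffarelli--Kohn--Nirenberg and $\eqref{term}$ by weighted H\"older.  However, there are two concrete execution gaps that would prevent the stated estimate from going through.

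\textbf{The ``bad'' part for general $a>1$.}  Your plan is to peel off $\sqrt\n$ and apply Cauchy--Schwarz in $y$, leaving $\|\sqrt\n\dot u\|_{L^2}$.  On the inner region $\{|y|\le|x|/2\}$ this produces
\[
\frac{1}{|x|^2}\Bigl(\int_{|y|\le|x|/2}|y|^2\n\,dy\Bigr)^{1/2}\|\sqrt\n\dot u\|_{L^2},
\]
and with only $\n\bar y^a\in L^\infty\cap L^1$ available the inner moment $\int_{|y|\le|x|/2}|y|^2\n\,dy$ grows like $|x|^{4-a}$ for $a<4$, so the prefactor decays only like $|x|^{-a}$.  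Integrating $\bar x^{\al}|x|^{-a}$ over $\{|x|\ge1\}$ in $\r^2$ requires $\al<a-2$, which with $\al=\min\{a/2,1\}$ fails for $1<a\le 3$.  Moreover, bounding the $u\cdot\na u$ contribution to $\|\sqrt\n\dot u\|_{L^2}$ by $\|\sqrt\n u\|_{L^\infty}\|\na u\|_{L^2}$ needs $u\bar x^{-\eta}\in L^\infty$, which costs another $\psi_3^{1/2}$ through the $H^2$ elliptic estimate and would push the exponent on $\psi_3$ past $1/2$.  The paper avoids both problems by first converting $|y|$ into $|y|^{\tilde\al}|x|^{1-\tilde\al}$ on the inner region (using $|y|\le|x|/2$ and $\tilde\al\le1$), so that only the $L^1$-moment $\||y|^{\tilde\al}\n\dot u\|_{L^1}$ and the Riesz-potential term $\mathcal I_1(|y|^{\tilde\al}|\n\dot u|)$ appear, and then estimating $\||y|^{\tilde\al}\n\dot u\|_{L^p}$ only for $p\in[1,4/3)$, which needs no $L^\infty$ control on $u$ and is valid for every $a>1$.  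This reduction to sub-$L^{4/3}$ moments of $|y|^{\tilde\al}\n\dot u$, together with Hardy--Littlewood--Sobolev, is the step missing from your outline and is essential for the range $a>1$.

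\textbf{The estimate of $\int F(\na u)\te\,dx$.}  Your three-factor split $|\na u|^{2-\beta}(|\na u|\bar x^{\al/2})^\beta(|\te|\bar x^{-\al\beta/2})$ with $\beta=3/4$ does not close: placing the middle factor in $L^{2/\beta}$ and the last in the weighted space given by Lemma~\ref{w5} forces the remaining factor into $L^{p_1}$ with $p_1(2-\beta)>2$ (for $\al\le1$ one gets $p_1(2-\beta)\approx 20/7$), so you cannot bound $\|\na u\|_{L^{p_1(2-\beta)}}$ by $\psi_1^{1/2}$; you would need the elliptic bound for $\|\na u\|_{L^p}$ with $p>2$, which introduces an additional $\psi_3$ factor and destroys the claimed exponent $7/8$.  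The paper instead uses a \emph{four}-factor H\"older,
\[
\Bigl|\int F(\na u)\te\,dx\Bigr|\le\|\na u\bar x^{\al/2}\|_{L^2}^{1/2}\,\|\te\bar x^{-\al/4}\|_{L^{12/\al}}\,\|\na u\|_{L^2}^{(6-\al)/4}\,\|\na u\|_{L^6}^{\al/4},
\]
keeping the weighted $\na u$ factor at the power $1/2$ and offloading the excess onto a separate $\|\na u\|_{L^6}$ piece that is controlled by the elliptic estimate $\|\na u\|_{L^6}\le C\psi_1\psi_2^{3/2}\psi_3^{1/2}$.  Tracking powers there, the $\psi_3$-exponent is $1/4+1/2+\al/8\le 7/8$, which is exactly what is needed later for the absorption in Lemma~\ref{le4}.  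Your bookkeeping $\beta/2+1/2=7/8$ only records two of the three $\psi_3$-contributions and the remaining one is incompatible with a pure $L^2$ bound on $\na u$.

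A smaller point: working directly with the weight $\bar x^\al$ in the Calder\'on--Zygmund bound is delicate because of the logarithmic factor in $\bar x$.  The paper instead uses pure power weights $|x|^\beta$ with $0<\beta<\tilde\al$ (which lie cleanly in $A_2$) and only at the very end dominates $\bar x^{\al/2}$ by $C(1+|x|^{3\al/4})$ to recover the $\bar x$-weighted bound.  This is easy to repair, but worth being explicit.
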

\begin{proof}
First, we give some basic weighted estimates. Note that $\eqref{a1}_1$ implies that 
\be\notag\|\n\|_{L^1}(t)=\|\n_0\|_{L^1},\quad t\in(0,T_\de],\ee
which along with \eqref{oh} and  \eqref{cx} leads to
\be\la{t2}\inf_{0\le t\le T_1}\int_{B_{2N_1}}\n dx\geq \frac{1}{4},\ee
for some positive constant $T_1$ depending only on $M_1$, $N_1$, and $\|\n_0\|_{L^1}$ but independent of $\de$, referring  \cite[Lemma 3.2]{liliang} for the detailed proof. In fact, here we require $T_\de^\ast\geq T_1$ for any $\de>0$, otherwise the standard extension method (see \cite{H-L,hulx}) can ensures it holds.
From now on,   we will  always assume that $T\le T_1.$
Thus, the combination of \eqref{t2} and Lemma \ref{w5} yields that for any $\ve>0$ and $\eta\in(0,1],$ every $v\in\ti D^{1,2}(\r^2)$ satisfies
\be \la{bb1}\ba
\|v\bar x^{-1}\|_{L^2}^2+\|v\bar x^{-\eta}\|_{L^{(2+\ve)/\eta} }^2
\le C(\epsilon,\eta) \|\sqrt{\n}v\|_{L^2}^2 +C(\epsilon,\eta) (\|\n\|_{L^\infty}+1) \|\na v\|_{L^2 }^2.
\ea\ee
In particular, one has
\be\la{uu}\ba
\|u\bar x^{-\eta}\|_{L^{(2+\ve)/\eta}}\le C(\epsilon,\eta)\psi_1^{1/2}\psi_2^{1/2},\quad \|\n^{\eta}u\|_{L^{(2+\ve)/\eta}}\le C(\epsilon,\eta)\psi_1^{1/2}\psi_2^{1/2+\eta},\ea\ee
\be\la{uu1}\ba
\|\te\bar x^{-\eta}\|_{L^{(2+\ve)/\eta}}
\le C(\epsilon,\eta)(\psi_1^{{1/2}}+\psi_2^{1/2}\|\na\te\|_{L^2}),\quad \|\n^{\eta}\te\|_{L^{(2+\ve)/\eta}}
\le C(\epsilon,\eta)\psi^{1+\eta}.\ea\ee

Next, we show the proof of  \eqref{hii}. One deduces from \eqref{a1}$_2$ that
$$(2\mu+\lambda)\div u=-(-\Delta)^{-1}\div(\n\dot u)+P,$$
which gives
\be\ba\la{good}
\mu \na u&=(\mu+\lambda)\na(-\Delta)^{-1}\na\div u-\na(-\Delta)^{-1}(\n\dot u)-\na(-\Delta)^{-1}\na P\\
         &=-\frac{\mu+\lambda}{2\mu+\lambda}\na(-\Delta)^{-1}\na((-\Delta)^{-1}\div (\n\dot u)-P)\\
         &\quad-\na(-\Delta)^{-1}(\n\dot u)-\na(-\Delta)^{-1}\na P.
\ea\ee
In view of the theory of singular integrals, we can  rewrite \eqref{good} as the sum of the ``good'' part and the ``bad'' part:
\be\la{key3}\p_i u\triangleq \mathcal{T}_i P+K_i\ast (\n\dot u),\quad i=1,2,\ee
 where in ``good'' part, $\mathcal{T}_i=-\frac{1}{2\mu+\lambda}\p_i(-\Delta)^{-1}\na\,(i=1,2)$ is the singular integral operator of convolution type with Calderon-Zygmund kernel  (refer to \cite{stein1}) which is bounded in $L^p\,(1<p<\infty)$ and in ``bad'' part, the  kernel $K_i(x)$ satisfies
\be\la{ker}|K_i(x)|\le C|x|^{-1},\quad|\na K_i(x)|\le C|x|^{-2},\quad i=1,2.\ee

For the ``good'' part, we adopt the well-known theory of  $A_p$ weights to estimate. Precisely, since for $-1<k<1$, $|x|^{2k}$ belongs to the class $A_2$ \cite[p.194]{stein},  let $\tilde\al \in(0,\min\{a/2,1\}]$, it holds that for any $0<\beta<\tilde\al$ and $i=1,2$,
\be\la{key1}\|\mathcal{T}_i P|x|^{\beta}\|_{L^2}\le C(\beta)\|P|x|^{\beta}\|_{L^2}\le C(\beta)\|\sqrt\n\te\|_{L^2}\|\n\bar x^a\|_{L^\infty}^{1/2}\le C(\beta)\psi_1^{1/2}\psi_2^{1/2}.\ee

Then, we claim that $\n\dot u$ has  the following cancellation condition: for any $t\in (0,T_1]$,
\be\la{zer}\int (\n\dot u) (y,t)dy=0,\ee
which combined with the property \eqref{ker} of kernel $K_i$  implies the ``bad'' part satisfies that  for any $x\neq0$, $t\in (0,T_1]$, $i=1,2$,
\be\la{k1}\ba
&|K_i\ast (\n\dot u)(x,t)| \\
&=\left|\int (K_i(x-y)-K_i(x))(\n\dot u)(y,t)dy\right|\\
&\le\int_{|y|\le {|x|/2}} |K_i(x-y)-K_i(x)||\n\dot u|(y,t)dy\\
&\quad+\int_{|y|>{|x|/2}}|K_i(x-y)-K_i(x)||\n\dot u|(y,t)dy\\
&\le \int_{|y|\le {|x|/2}}\left|\int_0^1 y\cdot\na K_i(x-\eta y)d\eta\right||\n\dot u|(y,t)dy\\
&\quad+\int_{|y|> {|x|/2}}|K_i(x-y)||\n\dot u|(y,t)dy+|K_i(x)|\int_{|y|> {|x|/2}}|\n\dot u|(y,t)dy\\
&\le C\int_{|y|\le {|x|/2}}\int_0^1 |y||x-\eta y|^{-2}|\n\dot u|(y,t)d\eta dy\\
&\quad+ C\int_{|y|>{|x|/2}}|x-y|^{-1}|\n\dot u|(y,t)dy+C|x|^{-1}\int_{|y|> {|x|/2}}|\n\dot u|(y,t)dy\\
&\le C|x|^{-2}\int_{|y|\le {|x|/2}}|y||\n\dot u|(y,t)dy+C|x|^{-1-\tilde\alpha}\int_{|y|> {|x|/2}}|y|^{\tilde\al}|\n\dot u|(y,t)dy\\
&\quad+ C|x|^{-\tilde\alpha}\int_{|y|> {|x|/2}}|x-y|^{-1}|y|^{\tilde\alpha}|\n\dot u|(y,t)dy\\
&\le C|x|^{-1-\tilde\alpha}\||y|^{\tilde\alpha} \n\dot u\|_{L^1}
+ C|x|^{-\tilde\alpha}\mathcal{I}_1(|y|^{\tilde\alpha}|\n\dot u|),
\ea\ee
where $\mathcal{I}_1$ is the Riesz potential of order 1 and $\tilde\al \in(0,\min\{a/2,1\}]$.
Thus, for any $0<\beta<\tilde\al$,
\be\ba\la{key2}
\|K_i\ast (\n\dot u)|x|^{\beta}\|_{L^2(\r^2\setminus B_1)}
&\le C\||x|^{\beta-\tilde\alpha}\|_{L^{4/(\tilde\alpha-\beta)}(\r^2\setminus B_1)}\|\mathcal{I}_1(|y|^{\tilde\alpha}|\n\dot u|)\|_{L^{4/(2-\tilde\alpha+\beta)}}\\
&\quad+C\||x|^{\beta-1-\tilde\al}\|_{L^2(\r^2\setminus B_1)}\||y|^{\tilde\alpha}\n\dot u\|_{L^1}\\
&\le C(\tilde\al,\beta)(\||y|^{\tilde\alpha}\n\dot u\|_{L^{4/(4-\tilde\alpha+\beta)}}+\||y|^{\tilde\alpha}\n\dot u\|_{L^1})\\
&\le C(\tilde\al,\beta)\psi_1\psi_2^{3/2}\psi_3^{1/2},
\ea\ee
where we have used Hardy-Littlewood-Sobolev inequality and  the following fact: for any $p\in[1,4/3)$ and $\eta\in[0,a/2]$,
\be\ba\notag
\||y|^\eta \n\dot u\|_{L^p}
&\le \||y|^\eta\n u_t\|_{L^p}+\||y|^\eta\n u\cdot\na u\|_{L^p}\\
&\le \|\sqrt\n u_t\|_{L^2}\|\n\bar y^{2\eta}\|_{L^{p/(2-p)}}^{1/2}+\|\na u\|_{L^2}\|\n\bar y^a\|_{L^{4p/(4-3p)}}\|u\bar y^{\eta-a}\|_{L^4}\\
&\le C(\eta,p)\psi_1\psi_2^{3/2}\psi_3^{1/2},
\ea\ee
due to \eqref{uu}.
Taking \eqref{key1} and \eqref{key2} into \eqref{key3}, we conclude that for $i=1,2$,
\be\ba\la{fin}
\|\p_i u|x|^{\beta}\|_{L^2}
&\le \|\na u|x|^{\beta}\|_{L^2(B_1)}+\|K_i\ast (\n\dot u)|x|^{\beta}\|_{L^2(\r^2\setminus B_1)}+\|\mathcal{T}_i P|x|^{\beta}\|_{L^2(\r^2\setminus B_1)}\\
&\le C(\tilde\al,\beta)\psi_1\psi_2^{3/2}\psi_3^{1/2}.
\ea\ee

In particular, choosing  $\beta=\al/2,3\al/4$ in \eqref{fin} with  $\al\triangleq \min\{a/2,1\}$ and applying  Lemma \ref{w4},  one derives that for $q_1=4/\al$,
\be\notag\ba
\|u\|_{L^{q_1}}+\|\na u\bar x^{\al/2}\|_{L^2}
&\le C\|\na u|x|^{\al/2}\|_{L^{2}}+C(\|\na u\|_{L^2}+\|\na u|x|^{3\al/4}\|_{L^2})\\
&\le C\psi_1\psi_2^{3/2}\psi_3^{1/2}.
\ea\ee

It remains to prove the claim \eqref{zer}. Indeed, taking text function $\varphi_r$ as follows:
\be\la{we}0\le\varphi_r\in C_0^\infty(B_{r}),\,\,\varphi_r\equiv1\,\text{in}\,B_{r/2},\,\,|\na^i\varphi_r|\le Cr^{-i},\,i=1,2,\ee
we deduce from \eqref{a1}$_2$ that
\be\ba\notag
\left|\int (\n\dot u) \varphi_r dy\right|
=&\left|\mu\int\Delta u\varphi_r dy+(\mu+\lambda)\int\na\div u \varphi_r dy-\int\na P\varphi_r dy\right|\\
\le&C\int|\na u||\na\varphi_r|dy+C\int\n|\te||\na\varphi_r|dy\\
\le&C\|\na u\|_{L^2(r/2\le|x|\le r)}+C\|\n\te\|_{L^2(r/2\le|x|\le r)}.
\ea\ee
Letting $r\rightarrow 0$ and using Lebesgue Dominated Convergence Theorem and \eqref{bb}, we arrive at \eqref{zer} directly. 

Finally, noticing that  the Sobolev inequality combined with the standard elliptic estimates to \eqref{a1}$_2$, \eqref{uu}, and \eqref{uu1} shows that
\be\ba\la{nau}
\|\na u\|_{L^6}
&\le C(\|\na^2 u\|_{L^{3/2}}+\|\na u\|_{L^2})\\
&\le C( \|\n  u_t\|_{L^{3/2}}+\|\n  u\cdot\na u\|_{L^{3/2}}+ \|\n\na\te\|_{L^{3/2}}+\|\na\n\te \|_{L^{3/2}}+\|\na u\|_{L^2})\\
&\le C\|\n\|_{L^3}^{1/2}\|\sqrt\n  u_t\|_{L^2}+C\|\n u\|_{L^6}\|\na u\|_{L^2}+C\|\n\|_{L^6} \|\na \te\|_{L^2}\\
&\quad+C\|\na \n\bar x\|_{L^2}\|\te\bar x^{-1}\|_{L^6}+C\|\na u\|_{L^2}\\
&\le C\psi_1\psi_2^{3/2}\psi_3^{1/2}.
\ea\ee
This as well as  \eqref{uu1}  and \eqref{hii} leads to
\be\notag\ba
\left|\int F(\na u)\te dx\right|
&\le \|\na u\bar x^{\al/2}\|_{L^2}^{1/2}\|\te\bar x^{-{\al/4}}\|_{L^{12/\al}}\|\na u\|_{L^2}^{(6-\al)/4}\|\na u\|_{L^6}^{\al/4}\\
&\le C(\psi_1\psi_2^{3/2}\psi_3^{1/2})^{1/2}(\psi_1^{{1/2}}+\psi_2^{1/2}\psi_3^{1/2})\psi_1^{(6-\al)/8} (\psi_1\psi_2^{3/2}\psi_3^{1/2})^{\al/4}\\
&\le C\psi_1^2\psi_2^2\psi_3^{7/8},
\ea\ee
due to $\al\le 1$. The proof of Lemma \ref{lee0} is completed.
\end{proof}

\begin{remark}
It is worth noting that the cancellation condition (\ref{zer}) is an exclusive property for two-dimensional Cauchy problem, which plays an  important role in the whole analysis. Indeed,  it cancels out the singularity of the ``bad'' part in (\ref{key3}) at far field, thereby ensures the ``bad'' part  has higher decay rate (see (\ref{k1})).
\end{remark}

\begin{lemma}\la{le01}
Let $T_1$ be as in Lemma \ref{lee0}. Then there exist  positive constants  $C$ and $A\geq 1$ such that for any $0<T\le T_1$,
\be\la{b155}\psi_1(T)\le  C+C\int_0^T\psi^Adt.\ee
\end{lemma}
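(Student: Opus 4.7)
The plan is to differentiate each of the three quantities constituting $\psi_1$ by testing the corresponding equation in \eqref{a1} against its natural multiplier, then bound every right-hand side term by a polynomial power of $\psi$, using the weighted integrability from Lemma \ref{lee0} to handle the hardest terms. First, testing $(\ref{a1})_{2}$ against $u$ and invoking $(\ref{a1})_{1}$ produces the basic kinetic identity
\be\notag
\frac{1}{2}\frac{d}{dt}\|\sqrt{\n}u\|_{L^2}^2+\mu\|\na u\|_{L^2}^2+(\mu+\lambda)\|\div u\|_{L^2}^2=\int P\div u\,dx,
\ee
whose right-hand side is dominated by $C\psi_1\psi_2^{1/2}\le C\psi^A$ via $\|P\|_{L^2}\le R\|\n\|_{L^\infty}^{1/2}\|\sqrt{\n}\te\|_{L^2}$ and Young's inequality; the dissipation terms on the left are retained for later absorption.

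To control $\sup\|\na u\|_{L^2}^2$, I test $(\ref{a1})_{2}$ against $u_t$; integration by parts (together with moving $\int P\div u_t=\frac{d}{dt}\!\int P\div u-\int P_t\div u$ onto the left) gives
\be\notag
\frac{d}{dt}\!\left[\frac{\mu}{2}\|\na u\|_{L^2}^2+\frac{\mu+\lambda}{2}\|\div u\|_{L^2}^2-\!\int P\div u\,dx\right]+\|\sqrt{\n}u_t\|_{L^2}^2=-\!\int\n(u\cdot\na u)\cdot u_t\,dx-\!\int P_t\div u\,dx.
\ee
The convective integral is handled via H\"older together with the weighted integrability \eqref{uu} of $u$ and the elliptic bound \eqref{nau}; a small multiple of $\|\sqrt{\n}u_t\|_{L^2}^2$ is absorbed into the left and the rest is polynomial in $\psi$. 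The $P_t\div u$ integral is expanded using $P_t=R(\n_t\te+\n\te_t)$ and $\n_t=-\div(\n u)$; the resulting pieces are controlled by the weighted Poincar\'e estimates of Lemma \ref{w5} and the bounds \eqref{uu} and \eqref{uu1}, with a small multiple of $\|\sqrt{\n}\te_t\|_{L^2}^2$ absorbed into $\psi_3\le\psi^A$. The boundary contribution $\int P\div u$ now appearing in the modified left-hand energy is controlled by Cauchy--Schwarz and $\|P\|_{L^2}^2\le\|\n\|_{L^\infty}\|\sqrt{\n}\te\|_{L^2}^2$, then recovered using the Step~1 identity integrated in time.

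For $\|\sqrt{\n}\te\|_{L^2}^2$, I test $(\ref{a1})_{3}$ against $\te$ and apply $(\ref{a1})_{1}$:
\be\notag
\frac{c_\upsilon}{2}\frac{d}{dt}\|\sqrt{\n}\te\|_{L^2}^2+\ka\|\na\te\|_{L^2}^2+\de\|\te\|_{L^2}^2=\int F(\na u)\te\,dx-R\int\n\te^2\div u\,dx.
\ee
The first right-hand term is exactly the quantity treated in \eqref{term}, producing $C\psi_1^2\psi_2^2\psi_3^{7/8}\le C\psi^A$, while the second is controlled by $\|\n\|_{L^\infty}\|\te\|_{L^4}^2\|\div u\|_{L^2}$ with $\|\te\|_{L^4}$ estimated via \eqref{uu1}. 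Adding the three identities, discarding the non-negative dissipation and damping terms on the left, bounding the initial contributions by $\psi_0$, and integrating over $[0,T]$ yield $\psi_1(T)\le C+C\int_0^T\psi^A\,dt$, the supremum being inherited from the monotonicity of the right-hand side in $T$.

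The main obstacle is the nonlinear coupling $\int F(\na u)\te\,dx$: it pairs the quadratic velocity dissipation against a weight-sensitive temperature factor and is polynomially controllable only through the $A_p$-weight decomposition \eqref{key3} of $\na u$ together with the two-dimensional cancellation identity \eqref{zer}, which is precisely the content of Lemma \ref{lee0}. Once that estimate is in hand, the remaining analysis reduces to a routine combination of H\"older, Young, the weighted Poincar\'e inequalities of Lemma \ref{w5}, and the integrability bounds \eqref{uu} and \eqref{uu1}.
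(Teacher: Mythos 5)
Your overall architecture (test $\eqref{a1}_2$ against $u$ and $u_t$, test $\eqref{a1}_3$ against $\te$, invoke Lemma \ref{lee0} for $\int F(\na u)\te\,dx$, sum and integrate in time) matches the paper's, but your treatment of $\int P_t\div u\,dx$ has a genuine gap. You expand $P_t=R(\n_t\te+\n\te_t)$ and propose to absorb a small multiple of $\|\sqrt\n\te_t\|_{L^2}^2$ ``into $\psi_3\le\psi^A$''. But $\psi_3$ as defined in \eqref{bp0} contains $\|\sqrt\n u_t\|_{L^2}^2$, $\|\na\te\|_{L^2}^2$, and $\de\|\te\|_{L^2}^2$ --- it does \emph{not} contain $\|\sqrt\n\te_t\|_{L^2}^2$, so that quantity cannot be bounded by any power of $\psi$ at this stage. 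It is only controlled in the time-integrated sense in Lemma \ref{le4} (estimate \eqref{b47}), whose proof already uses \eqref{b155}; relying on it here would be circular. The paper sidesteps $\te_t$ entirely by substituting the renormalized pressure equation \eqref{pf}, $P_t+\div(Pu)=c_\nu^{-1}R(F(\na u)+\ka\Delta\te-\de\te-P\div u)$, so that $-2\int P_t\div u\,dx$ becomes integrals of $|P||u||\na^2u|$, $|\na\te||\na^2u|$, $|\na u|^3$, $\de|\te||\na u|$, and $P|\na u|^2$, each of which is $\le C\psi^A$ by \eqref{uu}, \eqref{uu1}, \eqref{b04}, and \eqref{b111}. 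You need this substitution (or an equivalent device avoiding $\te_t$) for the argument to close.

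A secondary weak point: after summing, the modified energy contains $-2\int P\div u\,dx$, and you propose to control it via $\|P\|_{L^2}^2\le R^2\|\n\|_{L^\infty}\|\sqrt\n\te\|_{L^2}^2$ and ``the Step~1 identity integrated in time''. This reintroduces $\|\n\|_{L^\infty}\sup_t\|\sqrt\n\te\|_{L^2}^2$, i.e.\ essentially $\psi_2(T)$ times a piece of the very quantity $\psi_1(T)$ you are estimating; since $\psi_2(T)$ is a supremum and not of the form $C+C\int_0^T\psi^A\,dt$, the inequality does not close. The paper instead derives $\frac{d}{dt}\|P\|_{L^2}^2\le C\psi^A$ (again from \eqref{pf}) and integrates, obtaining $\sup_{0\le t\le T}\|P\|_{L^2}^2\le C+C\int_0^T\psi^A\,dt$, which is exactly the admissible form.
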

\begin{proof}
First, multiplying \eqref{a1}$_2$  by $u$  and integrating the resulting equality by parts yield
\be\ba\la{1}
&\frac{1}{2}\frac{d}{dt}\int\n |u|^2+\int\left(\mu|\na u|^2+(\mu+\lambda)(\div u)^2\right)dx\\
&\le C\|\n\|_{L^\infty}^{1/2}\|\sqrt\n \te\|_{L^2}\|\na u\|_{L^2}\\
&\le C\psi^{3/2}.
\ea\ee

Next, by virtue of  the  standard $L^2$-estimate to \eqref{a1}$_2$, \eqref{uu}, and \eqref{uu1}, one gets
\be \ba\la{b04}
2\|\na u\|_{H^1}
&\le C \left(  \|\n  u_t\|_{L^2}+\|\n  u\cdot\na u\|_{L^2}+ \|\na P\|_{L^2}+\|\na u\|_{L^2}\right)\\
&\le C \|\n\|_{L^\infty}^{1/2}\|\sqrt\n u_t\|_{L^2}
   +C\|\n u\|_{L^4}\|\na u\|_{L^2}^{1/2}\|\na u\|_{H^1}^{1/2}\\
&\quad+C\|\n\|_{L^\infty}\|\na\te\|_{L^2}+C\|\na\n\bar x^a\|_{L^q}\|\te\bar x^{-a}\|_{L^{2q/(q-2)}}+C\|\na u\|_{L^2}\\
&\le \|\na u\|_{H^1}+C\psi^A,
\ea\ee
which as well as \eqref{uu} shows that for any $\eta\in(0,1]$,
\be\ba\la{b111}
\|u\bar x^{-\eta}\|_{L^\infty}
&\le C(\eta)(\|u\bar x^{-\eta}\|_{L^{4/\eta}}+\|\na(u\bar x^{-\eta})\|_{L^3})\\
&\le C(\eta)(\|u\bar x^{-\eta}\|_{L^{4/\eta}}+\|\na u\|_{L^2}^{2/3}\|\na u\|_{H^1}^{1/3}+\|u\bar x^{-1}\|_{L^{3}}\|\bar x^{-\eta}\na\bar x\|_{L^{\infty}})\\
&\le C(\eta)\psi^A.
\ea\ee

Now, multiplying \eqref{a1}$_2$ by $2u_t$ and integrating by parts, we derive
\be\la{b5}\ba
&\frac{d}{dt}\int (\mu|\na u|^2+(\mu+\lambda)(\div u)^2-2P\div u)dx+\int \n|u_t|^2dx\\
&\le -2\int P_t\div u dx+\int \n|u|^2|\na u|^2dx\\
&\le C\int(|P||u|+|\na\te|)|\na^2 u|dx+C\int(|\na u|^3+\de|\te||\na u|+P|\na u|^2)dx\\
&\quad+C\|\n\bar x^{a}\|_{L^\infty}\|u\bar x^{-1/2} \|_{L^\infty}^{2}\|\na u\|_{L^2}^{2}\\
&\le C(\|\n\bar x^a\|_{L^\infty}\|\te\bar x^{-a/2}\|_{L^4}\|u\bar x^{-a/2}\|_{L^4}+\|\na\te\|_{L^2})\|\na^2 u\|_{L^2}+C\de\|\te\|_{L^2}\|\na u\|_{L^2}\\
&\quad+C(\|\na u\|_{L^2}+\|\n\|_{L^\infty}^{1/2}\|\sqrt\n\te\|_{L^2})\|\na u\|_{L^2}\|\na u\|_{H^1}+C\psi^A\\
&\le C\psi^A,
\ea\ee
where we have used  \eqref{uu}, \eqref{uu1}, \eqref{b04}, \eqref{b111}, and the following fact:
\be\la{pf}P_t+\div(Pu)=c_\nu^{-1}R(F(\na u)+\ka\Delta \te-\de\te-P\div u).\ee


Next, multiplying \eqref{a1}$_3$ by $\te $, one deduces from  integration by parts, \eqref{term}, and \eqref{uu1} that
\be\ba\la{on}
\frac{c_\upsilon}{2}\frac{d}{dt}\int\n\te^2dx+\int(\ka|\na\te|^2+\de|\te|^2)dx
&=\int F(\na u)\te dx- R\int \n\te^2\div u dx\\
&\le C\psi^A+C\|\n\bar x\|_{L^{6}}\|\te\bar x^{-{1/2}}\|_{L^6}^2\|\na u\|_{L^2}\\
&\le C\psi^A.
\ea\ee

Finally, adding \eqref{1} and \eqref{b5} to \eqref{on} and  
integrating the resulting inequality with respect to $t$, we 
obtain that
\be\ba\notag
&\psi_1(T)\le C+C\int_0^T\psi^A dt+C\sup_{0\le t\le T}\|P\|_{L^2}^2,
\ea\ee
which combined with the following fact:
\be\ba\notag
\frac{d}{dt}\|P\|_{L^2}^2
&\le C\int(|P|^2|\na u|+|P||\na u|^2+\de|\te||P|+|\na\te||\na P|)dx\\
&\le C\|\n\bar x^a\|_{L^\infty}^2\|\te\bar x^{-a}\|_{L^4}^2\|\na u\|_{L^2}+C\|\n\|_{L^\infty}^{1/2}\|\sqrt\n\te\|_{L^2}\|\na u\|_{L^2}\|\na u\|_{H^1}\\
&\quad+C\|\sqrt\n\te\|_{L^2}^2+C(\|\n\|_{L^\infty}\|\na\te\|_{L^2}^2+\|\na\n\bar x\|_{L^q}\|\te\bar x^{-1}\|_{L^{2q/(q-2)}}\|\na\te\|_{L^2})\\
&\le C\psi^A,
\ea\ee
due to \eqref{pf}, \eqref{uu1}, and \eqref{b04}, 
infers \eqref{b155}. The proof of Lemma \ref{le01} is finished.
\end{proof}


\begin{lemma}\la{le3}
Let  $q$ be as in Theorem \ref{th1} and $T_1$ as in Lemma \ref{lee0}. Then for any $p'>2$, there exist  positive constants $C$, $C(p')$, and $A\geq 1$ such that for any $0<T\le T_1$,
\be\la{b14}\psi_2(T)\le C+C(p')\psi_3^{q/(2q+p'q-2p')}(T)\left(1+\int_0^T\psi^Adt\right).\ee
\end{lemma}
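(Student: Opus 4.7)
The plan is to treat $v\triangleq \bar x^{a}\n$ as a solution of a transport equation. Multiplying \eqref{a1}$_1$ by $\bar x^{a}$ and rearranging,
\begin{equation*}
v_t + \div(vu) = \n u\cdot\na \bar x^{a},
\end{equation*}
where the source satisfies $|\na \bar x^{a}|\le C\bar x^{a-1}\log^{1+\eta_0}(e+|x|^2)$, i.e.\ one power of weight lower than $\bar x^{a}$ itself. I will estimate the three constituent norms of $\psi_2$ separately against this equation.

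For $\|v\|_{L^1}$, integrating in $x$ gives $\frac{d}{dt}\|v\|_{L^1}=\int\n u\cdot\na\bar x^a\,dx$, which by H\"older and the weighted inequalities \eqref{uu} is $\le C\psi^A$. For $\|v\|_{L^\infty}$, I form the $L^p$-energy identity, integrate by parts, and pass $p\to\infty$ to obtain $\frac{d}{dt}\|v\|_{L^\infty}\le C\|\na u\|_{L^\infty}\|v\|_{L^\infty}+C\|\n u\cdot\na\bar x^{a}\|_{L^\infty}$. For $\|\na v\|_{L^2}$, I differentiate the transport equation, test with $\na v$, and integrate by parts to arrive at
\begin{equation*}
\frac{d}{dt}\|\na v\|_{L^2}^2 \le C\|\na u\|_{L^\infty}\|\na v\|_{L^2}^2+C\|v\|_{L^\infty}\|\na^2 u\|_{L^2}\|\na v\|_{L^2}+C\psi^A,
\end{equation*}
where $\|\na^2 u\|_{L^2}\le C\psi^A$ comes from \eqref{b04}. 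Summing the three pieces and invoking Gronwall, the whole argument reduces to a sharp control of $\int_0^T\|\na u\|_{L^\infty}\,dt$ in terms of $\psi_3$ and $\int_0^T\psi^A\,dt$.

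The crucial step—where the exponent $\theta\triangleq q/(2q+p'q-2p')$ enters—is this last bound, and I will use the 2D Gagliardo--Nirenberg interpolation
\begin{equation*}
\|\na u\|_{L^\infty} \le C\|\na u\|_{L^{p'}}^{\lambda}\|\na^2 u\|_{L^{q}}^{1-\lambda},\qquad \lambda=\frac{p'(q-2)}{2q+p'q-2p'},
\end{equation*}
whose scaling is calibrated precisely so that $1-\lambda=2\theta$. The first factor is bounded by $C\|\na u\|_{H^1}\le C\psi^A$ via the 2D Sobolev embedding $H^1\hookrightarrow L^{p'}$ together with \eqref{b04}. For the second, the elliptic identities \eqref{h19}--\eqref{h20} reduce $\|\na^2 u\|_{L^q}$ to bounding $\|\n\dot u\|_{L^q}+\|\na P\|_{L^q}$; splitting $\n\dot u=\n u_t+\n u\cdot\na u$ and applying the weighted Sobolev inequality of Lemma \ref{w5} together with the integrability of $u$ afforded by Lemma \ref{lee0} yields $\|\na^2 u\|_{L^q}\le C\psi_3^{1/2}\psi^A$ pointwise in $t$. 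Raising to the $2\theta$ power, integrating in time, and pulling out the sup gives $\int_0^T\|\na u\|_{L^\infty}\,dt\le C\psi_3^\theta(T)\int_0^T\psi^A\,dt$, which fed back into the Gronwall inequalities for the three pieces produces \eqref{b14}.

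The principal technical obstacle is the pointwise bound $\|\n u_t\|_{L^q}\le C\psi_3^{1/2}\psi^A$: since $\psi_3$ controls only $\|\sqrt\n u_t\|_{L^2}$, the upgrade to $L^q$ must exploit the weight $\bar x^{a}$ already carried by $\n$ as a multiplier, combined with the weighted Poincar\'e estimate of Lemma \ref{w5} applied to $u_t$ and the presence of $\|\na(\bar x^{a}\n)\|_{L^q}$ inside $\psi$ to absorb the convective piece $\|\n u\cdot\na u\|_{L^q}$; tracking the exponents through this chain is what pins down the specific power $\theta=q/(2q+p'q-2p')$ on $\psi_3$.
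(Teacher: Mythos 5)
Your overall route coincides with the paper's: you write the same transport equation for $\bar x^{a}\n$, run $L^{p}$ (uniformly in $p$) and $H^{1}$/$W^{1,q}$ Gronwall estimates, reduce everything to $\int_0^T\|\na u\|_{L^\infty}\,dt$, and interpolate $\|\na u\|_{L^\infty}\le C\|\na u\|_{L^{p'}}^{\lambda}\|\na u\|_{W^{1,q}}^{1-\lambda}$ with exactly the exponent $1-\lambda=2q/(2q+p'q-2p')$ that the paper uses in the display preceding \eqref{b14}. The first factor is handled identically via \eqref{b04}. So this is not a different proof; the question is only whether your treatment of the second factor closes.

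It does not, and the gap is precisely the step you yourself flag as the ``principal technical obstacle.'' The claimed pointwise-in-time bound $\|\na^2 u\|_{L^q}\le C\psi_3^{1/2}\psi^A$ is false. The elliptic estimate forces you to control $\|\n u_t\|_{L^q}$ with $q>2$, and in the presence of vacuum the only available mechanism is $\|\n u_t\|_{L^q}\le \|\n\bar x\|_{L^\infty}\|u_t\bar x^{-1}\|_{L^q}$ followed by the weighted Poincar\'e inequality \eqref{e5}/\eqref{bb1} applied to $u_t$; but that inequality necessarily returns $\|\sqrt\n u_t\|_{L^2}+\|\na u_t\|_{L^2}$ on the right, and $\|\na u_t\|_{L^2}$ is \emph{not} contained in $\psi_3$ (which controls only $\sup_t\|\sqrt\n u_t\|_{L^2}^2$, $\sup_t\|\na\te\|_{L^2}^2$, $\de\sup_t\|\te\|_{L^2}^2$). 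No amount of exploiting the weight $\bar x^a$ carried by $\n$ can repair this: $\|\sqrt\n u_t\|_{L^2}$ gives no information on $u_t$ where $\n$ vanishes, so an $L^q$ bound on $u_t\bar x^{-1}$ genuinely requires the full gradient norm. The same issue recurs in $\|\na P\|_{L^q}$, whose estimate (via $\|\na\te\|_{H^1}$ and \eqref{b05}) brings in $\|\sqrt\n\te_t\|_{L^2}$, also absent from $\psi_3$ pointwise in time. The paper's resolution, which your argument is missing, is to keep these terms explicit — $\|\na u\|_{W^{1,q}}\le C\psi^A(\|\na u_t\|_{L^2}+\|\sqrt\n\te_t\|_{L^2}+1)$ as in \eqref{r2} — and then observe that they enter $\|\na u\|_{L^\infty}$ with the power $2q/(2q+p'q-2p')<1$, so that H\"older in time against the dissipation integral $\int_0^T(\|\na u_t\|_{L^2}^2+\|\sqrt\n\te_t\|_{L^2}^2)\,dt$ produces the factor $\psi_3^{q/(2q+p'q-2p')}(T)$ \emph{outside} the time integral; this strictly-less-than-one power is what later allows Lemma \ref{le4} to absorb it by Young's inequality. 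Your version, by asserting a pointwise bound that does not hold, skips the only place where the structure of \eqref{b14} is actually earned.
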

\begin{proof}
First,  combining the standard $L^2$-estimate to \eqref{a1}$_3$ with  \eqref{uu1}, \eqref{b111}, and \eqref{b04} 
 leads to
\be\ba\la{b05}
\|\na\te\|_{H^1}
&\le C(\|\n (\te_t+ u \cdot \na \te) \|_{L^2} +\|\na u\|_{L^4}^2+\|P\div u\|_{L^2}+\de\|\te\|_{L^2}+\|\na \te\|_{L^2})\\
& \le C\|\n\te_t\|_{L^2}+C\|\n\bar x\|_{L^\infty}\|u\bar x^{-1}\|_{L^\infty}\|\na\te\|_{L^2}+C\|\na u\|_{L^2}\|\na u\|_{H^1}\\
&\quad+C\|\n\te\|_{L^4}\|\na u\|_{L^2}^{1/2}\|\na u\|_{H^1}^{1/2}+C\de\|\te\|_{L^2}+C\|\na \te\|_{L^2}\\
& \le  C\psi^A(\|\sqrt\n\te_t\|_{L^2}+1).\ea\ee

Next, applying the standard $L^{q}$-estimate to \eqref{a1}$_2$, one gets
\be \ba\la{r2}
\|\na u\|_{W^{1,q}}
&\le C\left(\|\n u_t\|_{L^{q}}+\|\n  u\cdot\na u\|_{L^{q}}+\|\na P\|_{L^{q}}+\|\na u\|_{L^q}\right)\\
&\le C\|\n\bar x\|_{L^\infty}\|u_t\bar x^{-1}\|_{L^{q}}+C\|\n u\|_{L^{2q}}\|\na u\|_{L^2}^{1/q}\|\na u\|_{H^1}^{1-1/q}\\
&\quad+ C\|\n\|_{L^\infty}\|\na\te\|_{L^2}^{2/q}\|\na \te\|_{H^1}^{1-2/q}+C\|\bar x\na\n\|_{L^q}\|\te\bar x^{-1}\|_{L^\infty} +C\|\na u\|_{H^1}\\
&\le C\psi^A(\|\na u_t\|_{L^2}+\|\sqrt\n\te_t\|_{L^2}+1),
\ea\ee
where  we have used  \eqref{bb1}, \eqref{uu}, \eqref{b04}, \eqref{b05}, and the following fact: for any $\eta\in(0,1]$,
\be\ba\la{b112}
\|\te\bar x^{-\eta}\|_{L^\infty}
&\le C(\eta)(\|\te\bar x^{-\eta}\|_{L^{4/\eta}}+\|\na(\te\bar x^{-\eta})\|_{L^q})\\
&\le C(\eta)(\|\te\bar x^{-\eta}\|_{L^{4/\eta}}+\|\na \te\|_{L^2}^{2/q}\|\na\te\|_{H^1}^{1-2/q}+\|\te\bar x^{-1}\|_{L^{q}}\|\bar x^{-\eta}\na\bar x\|_{L^\infty})\\
&\le C(\eta)\psi^A(\|\sqrt\n\te_t\|_{L^2}+1),
\ea\ee
owing to \eqref{uu1} and \eqref{b05}.

Now, denote $\omega=\bar x^a\n$. By virtue of $\eqref{a1}_1$, it holds that
\be\la{b9}\p_t\omega+\div(\omega u)-a\omega u\cdot\na \log \bar x=0.\ee

On the one hand, for any $p\in[1,\infty)$, multiplying \eqref{b9} by $p\omega^{p-1}$  and integrating the resulting equality over $\r^2,$  one obtains  that
\be\la{b11}\ba
\frac{d}{dt}\|\omega\|_{L^p}  &\le C(\|\div u\|_{L^\infty}+\|u\cdot\na \log \bar x\|_{L^\infty})\| \omega\|_{L^p},
\ea\ee
where the constant $C$ is independent of $p$.

On the other hand,  applying the operator $\p_i (i=1,2)$ to \eqref{b9} 
along with some standard calculation yields that
 \be\la{b50}\ba
\frac{d}{dt}\|\na \omega\|_{L^{2}}
&\le C(\|\na u\|_{L^\infty}+\|u\cdot\na \log \bar x\|_{L^\infty})\|\na \omega\|_{L^{2}}\\
&\quad+C\|\omega\|_{L^\infty}(\|\na^2u \|_{L^{2}}+\||\na u||\na\log\bar x|\|_{L^{2}}+\||u||\na^2\log \bar x|\|_{L^{2}}).
\ea\ee

Hence,   we deduce from \eqref{b11}, \eqref{b50}, \eqref{uu}, \eqref{b04}, \eqref{b111},  \eqref{r2}, and Gagliardo-Nirenberg inequality that for    any $p'>2$,
\be\notag\ba
\sup_{0\le t\le T}\|\omega\|_{L^p\cap H^1}
&\le C+C(p')\int_0^T\psi^A(\|\na u\|_{L^{p'}}^{p'(q-2)/(2q+p'q-2p')}\|\na u\|_{W^{1,q}}^{2q/(2q+p'q-2p')}\!+\!1)dt\\
&\le C+C(p')\int_0^T\psi^A(\|\na u_t\|_{L^2}+\|\sqrt\n\te_t\|_{L^2}+1)^{2q/(2q+p'q-2p')}dt\\
&\le C+C(p')\psi_3^{q/(2q+p'q-2p')}(T)\left(1+\int_0^T\psi^Adt\right),
\ea\ee
which implies \eqref{b14}, because of the  uniformity with respect to $p$. The proof of Lemma \ref{le3} is finished.
\end{proof}

With   Lemmas \ref{lee0}--\ref{le3} at hand, we can estimate the most troublesome term $\psi_3$.
\begin{lemma}\la{le4}
Let $T_1$ be as in Lemma \ref{lee0}. Then there exist positive  constants $C$ and $A\geq1$ such that for any $0<T\le T_1$,
\be\ba\la{b47}
\psi_3(T)+\int_0^T(\|\na u_t\|_{L^2}^2+\|\sqrt\n\te_t\|_{L^2}^2)dt\le C+C\int_0^T\psi^Adt.
\ea\ee
\end{lemma}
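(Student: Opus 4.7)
The plan is to derive coupled differential inequalities for $\|\sqrt{\rho}u_t\|_{L^2}^2$ on one hand, and $\|\nabla\theta\|_{L^2}^2+\delta\|\theta\|_{L^2}^2$ on the other, then combine them and absorb the highest-order dissipation terms into the left-hand side, Gronwall in $\psi$.

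First I would differentiate $\eqref{a1}_2$ in $t$, test with $u_t$, and use $\eqref{a1}_1$ to rewrite $\rho_t$. This yields
\begin{equation*}
\tfrac{1}{2}\tfrac{d}{dt}\!\int\!\rho|u_t|^2 + \mu\|\nabla u_t\|_{L^2}^2 + (\mu+\lambda)\|\div u_t\|_{L^2}^2 = \int\! P_t\div u_t\,dx - \int\!\rho(u\cdot\nabla u_t + u_t\cdot\nabla u)\cdot u_t\,dx + \cdots
\end{equation*}
Using \eqref{pf} to replace $P_t$ introduces $F(\nabla u)\div u_t$, $\Delta\theta\,\div u_t$, $\delta\theta\div u_t$, and $P\div u\,\div u_t$, each of which I bound by Hölder combined with the weighted Poincar\'e bounds \eqref{bb1}, \eqref{uu}, \eqref{uu1}, \eqref{b111}, \eqref{b112} and the $L^2$/$L^q$-elliptic estimates \eqref{b04}, \eqref{b05}, \eqref{r2}. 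In each case enough factors of $\|\nabla u_t\|_{L^2}$ can be extracted so that Cauchy's inequality lets us absorb them into $\mu\|\nabla u_t\|_{L^2}^2$; what remains is $\le C\psi^A$.

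Second I would test $\eqref{a1}_3$ with $\theta_t$. The Laplacian and damping terms produce $\frac{\kappa}{2}\frac{d}{dt}\|\nabla\theta\|_{L^2}^2+\frac{\delta}{2}\frac{d}{dt}\|\theta\|_{L^2}^2$, while the coercive $c_\upsilon\|\sqrt{\rho}\theta_t\|_{L^2}^2$ appears on the left. The troublesome term $\int F(\nabla u)\theta_t\,dx$ is rewritten as
\begin{equation*}
\int F(\nabla u)\theta_t\,dx = \tfrac{d}{dt}\!\int F(\nabla u)\theta\,dx - \int F(\nabla u)_t\,\theta\,dx,
\end{equation*}
so that the dangerous expression is transferred to the time-derivative side, where \eqref{term} gives $\bigl|\int F(\nabla u)\theta\,dx\bigr|\le C\psi_1^2\psi_2^2\psi_3^{7/8}$. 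The leftover $\int F(\nabla u)_t\theta\,dx$ contains the structure $\int \nabla u\cdot\nabla u_t\,\theta\,dx$, which I split as $\|\nabla u\bar x^{\alpha/2}\|_{L^2}\|\theta\bar x^{-\alpha/2}\|_{L^\infty\text{ or }L^p}\|\nabla u_t\|_{L^2}$ with a controllable weight, the $\|\nabla u_t\|_{L^2}$-factor again absorbed. The remaining $P\div u\,\theta_t$ and $\rho u\cdot\nabla\theta\,\theta_t$ are handled by Cauchy against $c_\upsilon\|\sqrt{\rho}\theta_t\|_{L^2}^2$ using \eqref{uu}, \eqref{uu1}.

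Adding both estimates, absorbing the coercive terms, and integrating in time gives
\begin{equation*}
\psi_3(T) + \int_0^T\!\bigl(\|\nabla u_t\|_{L^2}^2+\|\sqrt{\rho}\theta_t\|_{L^2}^2\bigr)dt \le C + C\sup_{0\le t\le T}\!\int F(\nabla u)\theta\,dx + C\!\int_0^T\!\psi^A\,dt.
\end{equation*}
Using \eqref{term} and Young's inequality on the power $\psi_3^{7/8}$ (strictly less than $1$) allows us to move that term to the left, yielding \eqref{b47}. The main obstacle is precisely the treatment of $\frac{d}{dt}\int F(\nabla u)\theta\,dx$: without the decay estimate \eqref{hii} on $\|\nabla u\,\bar x^{\alpha/2}\|_{L^2}$ established via the cancellation property \eqref{zer}, this term would not be absorbable, and all other estimates are routine Hölder–weighted-Sobolev manipulations in terms of $\psi$.
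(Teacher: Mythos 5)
Your skeleton matches the paper's Steps 1, 2 and 4: a $u_t$-energy estimate, a $\theta_t$-energy estimate with $\int F(\nabla u)\theta_t\,dx$ rewritten as $\frac{d}{dt}\int F(\nabla u)\theta\,dx-\int F(\nabla u)_t\theta\,dx$, the key use of \eqref{hii}/\eqref{term} for the $F$-terms, and a final Young absorption of $\psi_3^{7/8}$. However, there is a genuine gap in Step 1. The term $I_4=\int P_t\div u_t\,dx$ cannot be reduced to ``$\epsilon\|\nabla u_t\|_{L^2}^2+C\psi^A$'': whether you write $P_t=R(\rho\theta_t+\rho_t\theta)$ or substitute \eqref{pf} (which brings in $\Delta\theta$, bounded via \eqref{b05} by $C\psi^A(\|\sqrt\rho\theta_t\|_{L^2}+1)$), a contribution of size $\|\rho\theta_t\|_{L^2}\|\nabla u_t\|_{L^2}$ survives, and $\|\sqrt\rho\theta_t\|_{L^2}$ is \emph{not} part of $\psi$ — it can only be absorbed by the coercive term $c_\upsilon\|\sqrt\rho\theta_t\|_{L^2}^2$ coming from the $\theta$-equation, which has a \emph{fixed} constant. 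If you estimate $\|\rho\theta_t\|_{L^2}\le\|\rho\|_{L^\infty}^{1/2}\|\sqrt\rho\theta_t\|_{L^2}$, Cauchy leaves a coefficient $C\psi_2$ in front of $\|\sqrt\rho\theta_t\|_{L^2}^2$, which the fixed dissipation cannot swallow. The paper's resolution is to split $\|\rho\theta_t\|_{L^2}\le\|\sqrt\rho\theta_t\|_{L^2}+\|\rho^{3/2}\theta_t\|_{L^2}$ (giving a fixed constant $\tilde C$ in \eqref{b32}) and to add a \emph{third} energy estimate, multiplying $\eqref{a1}_3$ by $2\rho^2\theta_t$, which supplies the dissipation $c_\upsilon\int\rho^3|\theta_t|^2dx$ needed to absorb $\|\rho^{3/2}\theta_t\|_{L^2}^2$; your proposal omits this step entirely, and without it (or an equivalent device) the argument does not close.

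A secondary point: in the final absorption you treat $C\psi_1^2\psi_2^2\psi_3^{7/8}$ as if only the exponent $7/8<1$ mattered. But $\psi_2$ is itself bounded, via Lemma \ref{le3}, by a positive power of $\psi_3$ times $(1+\int_0^T\psi^A dt)$, so after Young's inequality the prefactor $\psi_2^{16}$ reintroduces $\psi_3$ to a power $16q/(2q+p'q-2p')$ that must still keep the total exponent below $1$; this is why the paper chooses $p'=30q/(q-2)$ explicitly. The conclusion survives, but only after this bookkeeping, which your proposal does not perform.
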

\begin{proof}
We will prove this lemma in four steps.

{\it Step 1}
Differentiating \eqref{a1}$_2$ with respect to $t$ gives
\be\la{nt0}\ba
&\n (u_{tt}+u\cdot\na u_t)-\mu\Delta u_t-(\mu+\lambda) \na\div  u_t\\
&=-\n_t (u_t+u\cdot\na u)-\n u_t\cdot\na u -\na P_t.\\
\ea\ee
Then multiplying \eqref{nt0} by $u_t$ and integrating the resulting equality over $\r^2$, it holds that
\be\la{b26} \ba
&\frac{1}{2}\frac{d}{dt}\int\rho|u_t|^2dx +\int\left(\mu|\na u_t|^2+(\mu+\lambda)(\div u_t)^2\right) dx \\
&=-2\int\n u\cdot \na u_t\cdot u_t dx -\int\n u \cdot\na (u\cdot\na u\cdot u_t)dx\\
&\quad-\int \n u_t\cdot\na u\cdot u_t dx+\int P_t\div u_t dx\triangleq \sum_{i=1}^4 I_i.
\ea \ee
It thus follows from   \eqref{bb1}--\eqref{uu1}, \eqref{b04}, \eqref{b111}, 
 and \eqref{a1}$_1$ that
\be\ba\la{b27}
|I_1|&\le C\|\n\bar x\|_{L^\infty}^{1/2}\|u\bar x^{-{1/2}}\|_{L^\infty}\|\sqrt\n u_t\|_{L^2}\|\na u_t\|_{L^2}\\
   &\le C\psi^A\|\na u_t\|_{L^2}, 
\ea\ee
\be\ba\la{b28}
|I_2|&\le\int\n|u|(|\na u|^2|u_t|+|u||\na^2 u||u_t|+|u||\na u||\na u_t|)dx\\
   &\le C\|\sqrt\n u_t\|_{L^2}\|\n\bar x\|_{L^\infty}^{1/2}(\|u\bar x^{-\!{1/2}\!}\|_{L^\infty}\|\na u\|_{L^2}\|\na u\|_{H^1}+\|u\bar x^{-\!{1/4}\!}\|_{L^{\infty}}^2\|\na^2 u\|_{L^2})\\
   &\quad+ C\|\n\bar x\|_{L^\infty}\|u\bar x^{-{1/2}}\|_{L^\infty}^2\|\na u\|_{L^2}\|\na u_t\|_{L^2}\\
   &\le C\psi^A(\|\na u_t\|_{L^2}+1), 
\ea\ee
\be\ba\la{b29}
 |I_3|&\le C\|\n\bar x\|_{L^\infty}^{1/2}\|\sqrt\n u_t\|_{L^2}\|u_t\bar x^{-{1/2}}\|_{L^6}\|\na u\|_{L^2}^{2/3}\|\na u\|_{H^1}^{1/3}\\
      &\le C\psi^A(\|\na u_t\|_{L^2}+1),
 \ea\ee
 and
\begin{equation}\ba \la{b30}
|I_4|
\le & C\| \na u_t  \|_{L^2}(\|\rho \theta_t\|_{L^2} +\|\n_t\te\|_{L^2})\\
\le & C\| \na u_t  \|_{L^2}\|\rho \theta_t\|_{L^2}+C\| \na u_t  \|_{L^2}\|\n\te\|_{L^4}\|\na u\|_{L^2}^{1/2}\|\na u\|_{H^1}^{1/2} \\
&+C\| \na u_t  \|_{L^2}\|\na\n\bar x\|_{L^q}\|u\bar x^{-{1/2}}\|_{L^{4q/(q-2)}}\|\te\bar x^{-{1/2}}\|_{L^{4q/(q-2)}}\\
\le & C\| \na u_t  \|_{L^2}(\|\sqrt\n \te_t\|_{L^2}+\|\n^{3/2} \te_t\|_{L^2}+\psi^A). 
\ea \end{equation}

Putting \eqref{b27}--\eqref{b30} into \eqref{b26} and using Cauchy's inequality, one has
\be \ba\la{b32}
\frac{d}{dt}\int \rho|u_t|^2dx +\mu\int|\na u_t|^2 dx \le \tilde C(\|\sqrt\n \te_t\|_{L^2}^2+\|\n^{3/2} \te_t\|_{L^2}^2)+C\psi^A.
\ea \ee

{\it Step 2}
Multiplying \eqref{a1}$_3$ by $2\te_t$ and integrating the resulting equality  lead to
\be\ba\la{b33}
&\frac{d}{dt}\int(\ka|\na\te|^2+\de|\te|^2-2 F(\na u)\te)dx+ 2c_\upsilon\int\rho|\te_t|^2dx\\
&=-2c_\upsilon\int\n \te_t u\cdot\na\te dx-2R\int\n\te\te_t\div u dx-2\int F(\na u)_t\te dx\triangleq  \sum_{i=1}^3J_i.
\ea\ee
We deduce from  \eqref{uu1}, \eqref{b04},  
\eqref{b111}, and \eqref{hii}  that
\be\la{r18}\ba
|J_1|&\le C\|\sqrt\n \te_t\|_{L^2}\|\n\bar x\|_{L^\infty}^{1/2}\|u \bar x^{-{1/2}}\|_{L^\infty}\|\na\te\|_{L^2}\\
&\le \frac{c_\upsilon}{2}\|\sqrt\n \te_t\|_{L^2}^2+C\psi^A,
\ea\ee
\be\la{r19}\ba
|J_2|&\le C \|\sqrt\n\te_t\|_{L^2}\|\n^{1/2}\te\|_{L^6}\|\na u\|_{L^2}^{2/3}\|\na u\|_{H^1}^{1/3}\\
&\le \frac{c_\upsilon}{2}\|\sqrt\n\te_t\|_{L^2}^2+C\psi^A,
\ea\ee
and
\be\la{r20}\ba
|J_3|
&\le \|\na u_t\|_{L^2}\|\na u\bar x^{\al/2}\|_{L^2}^{1/2}\|\te\bar x^{-{\al/4}}\|_{L^{16/\al}}\|\na u\|_{L^2}^{(4-\al)/8}\|\na u\|_{H^1}^{\al/8}\\
&\le C\psi^A\|\na u_t\|_{L^2}.
\ea\ee
Submitting \eqref{r18}--\eqref{r20} into \eqref{b33}, one derives
\be\la{r21}\ba
\frac{d}{dt}\int(\ka|\na\te|^2+\de|\te|^2-2 F(\na u)\te)dx+c_\upsilon\int\rho|\te_t|^2dx\le C\psi^A(\|\na u_t\|_{L^2}+1).
\ea\ee

{\it Step 3}
Multiplying \eqref{a1}$_3$ by $2\rho^2 \theta_t$ and integrating the resulting equality  yield  that
\be\la{b38}\ba
&\frac{d}{dt} \int (\ka\rho^2 |\na\te|^2+\de\n^2|\te|^2)dx + 2c_\upsilon\int \rho^3 | {\te}_t |^2dx\\
&=2\int \n\n_t(\ka|\na\te|^2+\de|\te|^2)dx-4\ka\int\na\n\cdot \na \te\rho\theta_t dx-2c_\upsilon \int \rho^3 u\cdot\na\theta\theta_t dx \\
&\quad -2R\int \rho^3 \te \div u \theta_t dx +2\int F(\na u) \rho^2 \theta_t dx\triangleq \sum_{i=1}^5 W_i.
\ea\ee
Direct calculations show  that
\be\la{b39}\ba
|W_1|&\le C\|\n\|_{L^\infty}\|\n_t\|_{L^2}(\|\na\te\|_{L^2}\|\na\te\|_{H^1}+\de\|\te\|_{L^2}\|\te\|_{H^1})\\
   &\le C\psi^A(\|\n\|_{L^\infty}\|\na u\|_{L^2}+\|\na\n\bar x\|_{L^q}\|u\bar x^{-1}\|_{L^{2q/(q-2)}})(\|\na\te\|_{H^1}+1)\\
   &\le C\psi^A(\|\sqrt\n\te_t\|_{L^2}+1),
\ea\ee
\be\ba\la{b40}
|W_2|
	&\le C\|\n\|_{L^\infty}^{1/2}\|\sqrt\n\te_t\|_{L^2}\|\na\n\|_{L^q}\|\na\te\|_{L^2}^{(q-2)/q}\|\na\te\|_{H^1}^{2/q}\\
	&\le C\psi^A(\|\sqrt\n\te_t\|_{L^2}^{(q+2)/q}+1),
\ea\ee
and
\be\ba\la{b41}
\sum_{i=3}^5|W_i|&\le c_\upsilon\int\n^3|\te_t|^2dx+C\int (\n^3 (|u||\na\te|+\te|\na u|)^2+ \n|\na u|^4)dx\\
   &\le c_\upsilon\int\n^3|\te_t|^2dx+C\|\n\|_{L^\infty}\|\n u\|_{L^4}^2\|\na\te\|_{L^2}\|\na \te\|_{H^1} \\
   &\quad +C\|\n\|_{L^\infty}\|\n\te\|_{L^4}^2\|\na u\|_{L^2}\|\na u\|_{H^1} + C\|\n\|_{L^\infty}\|\na u\|_{L^2}^2\|\na u\|_{H^1}^2\\
   &\le c_\upsilon\int\n^3|\te_t|^2dx +C\psi^A(\|\sqrt\n\te_t\|_{L^2}+1), 
\ea\ee
owing to \eqref{uu}, \eqref{uu1}, 
\eqref{b04}, and \eqref{b05}. 
Putting \eqref{b39}--\eqref{b41} into \eqref{b38}  leads to
\be \ba \la{b42}
&\frac{d}{dt} \int (\ka\rho^2 |\na\te|^2+\de\n^2|\te|^2)dx+c_\upsilon \int\n^3|\te_t|^2dx\le  C\psi^A(\|\sqrt\n\te_t\|_{L^2}^{(q+2)/q}+1). 
\ea\ee

{\it Step 4}
Adding \eqref{r21} and \eqref{b42} both multiplied by $c_\upsilon^{-1}(\tilde C+1)$ to \eqref{b32} and applying Young's inequality to the resulting inequality conclude that
\be\ba\la{b44}
&c_\upsilon^{-1}(\tilde C+1)\frac{d}{dt}\int(\ka|\na\te|^2+\de|\te|^2-2 F(\na u)\te+\ka\rho^2 |\na\te|^2+\de\n^2|\te|^2)\\
&+\frac{d}{dt}\int\rho|u_t|^2dx+\frac{1}{2}\int(\mu|\na u_t|^2+\n|\te_t|^2+\n^3|\te_t|^2)dx\le C\psi^A.
\ea\ee
Taking into account the compatibility condition \eqref{co1}, we can define
\be\la{qq1}\sqrt\n u_t(x,t=0)=-(\n_0)^{1/2}u_0\cdot \na u_0-g.\ee
Therefore, using \eqref{b44}, \eqref{qq1},  \eqref{term},  \eqref{b155}, and choosing $p'=30q/(q-2)$, one gets 
\be\ba\notag
&\psi_3(T)+\int_0^T(\|\na u_t\|_{L^2}^2+\|\sqrt\n\te_t\|_{L^2}^2)dt\\
&\le C+C\int_0^T\psi^Adt+C\psi_1^2(T)\psi_2^2(T)\psi_3^{7/8}(T)\\
&\le \left(C+C(p')\psi_3^{2q/(2q+p'q-2p')+7/8}(T)\right)\left(1+\int_0^T\psi^Adt\right)^4\\
&\le \frac{1}{2}\psi_3(T)+C+C\int_0^T\psi^Adt.
\ea\ee
 The proof of Lemma \ref{le4} is completed.
\end{proof}

\begin{lemma}\la{lee7}
Let  $q$ be as in Theorem \ref{th1} and $T_1$ as in Lemma \ref{lee0}. Then there exist positive  constants $C$ and $A\geq1$ such that for any $0<T\le T_1$,
\be\ba\la{bxj}
\sup_{0\le t\le T}\|\na(\bar x^a\n)\|_{L^{q}}\le C+C\int_0^T\psi^Adt.
\ea\ee
\end{lemma}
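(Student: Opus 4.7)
\medskip

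\noindent\textbf{Proof proposal for \eqref{bxj}.} The plan is to differentiate the transport equation \eqref{b9} for $\omega\triangleq\bar x^a\rho$, test against $q|\nabla\omega|^{q-2}\nabla\omega$, and reduce matters to the estimate \eqref{r2} on $\|\nabla u\|_{W^{1,q}}$ together with the time integrability of $\|\nabla u_t\|_{L^2}+\|\sqrt\rho\theta_t\|_{L^2}$ furnished by \eqref{b47}.

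First I would apply $\partial_i$ ($i=1,2$) to \eqref{b9}, multiply by $|\partial_i\omega|^{q-2}\partial_i\omega$, integrate over $\mathbb{R}^2$, and integrate by parts in the transport term $u\cdot\nabla\partial_i\omega$. This yields, after summation in $i$ and the usual manipulations,
\begin{equation*}
\frac{d}{dt}\|\nabla\omega\|_{L^q}\le C\bigl(\|\nabla u\|_{L^\infty}+\|u\cdot\nabla\log\bar x\|_{L^\infty}\bigr)\|\nabla\omega\|_{L^q}+C\|\omega\|_{L^\infty}\bigl(\|\nabla^2u\|_{L^q}+\|\nabla u\|_{L^q}+\|u\bar x^{-2}\|_{L^q}\bigr),
\end{equation*}
where I have used $|\nabla\log\bar x|\le C\bar x^{-1}$ and $|\nabla^2\log\bar x|\le C\bar x^{-2}$.

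Next I would dispatch each factor by the estimates already established. The term $\|u\cdot\nabla\log\bar x\|_{L^\infty}$ is bounded by $\|u\bar x^{-1}\|_{L^\infty}\le C\psi^A$ thanks to \eqref{b111}, while $\|u\bar x^{-2}\|_{L^q}$ and $\|\nabla u\|_{L^q}$ are controlled by $\psi^A$ via \eqref{uu} and \eqref{b04}. For the $L^\infty$-norm of $\nabla u$ I invoke the Sobolev embedding $W^{1,q}(\mathbb{R}^2)\hookrightarrow L^\infty(\mathbb{R}^2)$ for $q>2$, giving $\|\nabla u\|_{L^\infty}\le C\|\nabla u\|_{W^{1,q}}$, which together with \eqref{r2} produces
\begin{equation*}
\|\nabla u\|_{L^\infty}+\|\nabla^2 u\|_{L^q}\le C\psi^A\bigl(\|\nabla u_t\|_{L^2}+\|\sqrt\rho\theta_t\|_{L^2}+1\bigr).
\end{equation*}
Since $\|\omega\|_{L^\infty}\le\psi_2\le\psi$ and $\|\nabla\omega\|_{L^q}\le\psi$, the differential inequality collapses to
\begin{equation*}
\frac{d}{dt}\|\nabla\omega\|_{L^q}\le C\psi^A\bigl(\|\nabla u_t\|_{L^2}+\|\sqrt\rho\theta_t\|_{L^2}+1\bigr).
\end{equation*}

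Finally I would integrate in time and estimate the right-hand side by Cauchy--Schwarz:
\begin{equation*}
\int_0^T\psi^A\bigl(\|\nabla u_t\|_{L^2}+\|\sqrt\rho\theta_t\|_{L^2}\bigr)dt\le\frac{1}{2}\int_0^T\psi^{2A}dt+\frac{1}{2}\int_0^T\bigl(\|\nabla u_t\|_{L^2}^2+\|\sqrt\rho\theta_t\|_{L^2}^2\bigr)dt.
\end{equation*}
The second term on the right is absorbed using \eqref{b47}, after which the conclusion \eqref{bxj} follows with a possibly larger exponent $A$. The main obstacle I anticipate is ensuring that the $\|\nabla u_t\|_{L^2}$-factor arising from \eqref{r2} appears only to the first power in time, so that the Cauchy--Schwarz trick together with \eqref{b47} closes the estimate without reintroducing $\psi_3$ on the right; this is why the bound must be arranged linearly in $\|\nabla u_t\|_{L^2}$ rather than in some higher power.
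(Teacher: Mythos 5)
Your proposal reproduces the paper's argument step by step: differentiate the weighted transport equation \eqref{b9}, derive a differential inequality for $\|\nabla\omega\|_{L^q}$, control the coefficients via \eqref{b111}, \eqref{uu}, \eqref{b04}, and \eqref{r2}, then integrate in time and dispose of the $\|\nabla u_t\|_{L^2}+\|\sqrt\rho\theta_t\|_{L^2}$ contribution by Cauchy--Schwarz together with \eqref{b47}. The overall structure, the choice of test function, and the closing step are all the same as what the paper does.

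One small correction is in order, though. The pointwise bounds you invoke, $|\nabla\log\bar x|\le C\bar x^{-1}$ and $|\nabla^2\log\bar x|\le C\bar x^{-2}$, are not true: since $\bar x=(e+|x|^2)^{1/2}\log^{1+\eta_0}(e+|x|^2)$ carries the extra logarithmic factor, the actual decay is only $|\nabla\log\bar x|\le C(e+|x|^2)^{-1/2}$ and $|\nabla^2\log\bar x|\le C(e+|x|^2)^{-1}$, which are strictly weaker than $\bar x^{-1}$ and $\bar x^{-2}$. Consequently, the claimed replacement $\||u||\nabla^2\log\bar x|\|_{L^q}\le C\|u\bar x^{-2}\|_{L^q}$ goes in the wrong direction and cannot be used as written (and $\|u\bar x^{-2}\|_{L^q}$ is also outside the range $\eta\in(0,1]$ in \eqref{uu}). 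This is harmless to the final conclusion, because for any $\eta\in(0,1)$ one still has $(e+|x|^2)^{-1/2}\le C_\eta\bar x^{-\eta}$, hence $\|u\cdot\nabla\log\bar x\|_{L^\infty}\le C\|u\bar x^{-\eta}\|_{L^\infty}\le C\psi^A$ by \eqref{b111}, and similarly $\||u||\nabla^2\log\bar x|\|_{L^q}\le\|u\bar x^{-\eta}\|_{L^\infty}\,\|(e+|x|^2)^{-1}\bar x^{\eta}\|_{L^q}\le C\psi^A$ since $(e+|x|^2)^{-1}\bar x^{\eta}\in L^q(\mathbb{R}^2)$ when $\eta<2-2/q$. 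With these corrected decay rates the rest of your argument closes exactly as in the paper.
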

\begin{proof}
It follows from the standard calculation, \eqref{uu}, \eqref{b111}, and \eqref{r2} that
 \be\ba\notag
\frac{d}{dt}\|\na \omega\|_{L^{q}}
&\le C(\|\na u\|_{L^\infty}+\|u\cdot\na \log \bar x\|_{L^\infty})\|\na \omega\|_{L^{q}}\\
&\quad+C\|\omega\|_{L^\infty}(\|\na^2u \|_{L^{q}}+\||\na u||\na\log\bar x|\|_{L^{q}}+\||u||\na^2\log \bar x|\|_{L^{q}})\\
&\le C\psi(\|\na u\|_{W^{1,q}}+\|u\cdot\na \log \bar x\|_{L^\infty}+\||u||\na^2\log \bar x|\|_{L^{q}})\\
&\le C\psi^A(\|\na u_t\|_{L^2}+\|\sqrt\n \te_t\|_{L^2}+1).
\ea\ee
Then integrating the above inequality with respect to $t$ and using \eqref{b47} show \eqref{bxj} and finish the proof of Lemma \ref{lee7}
\end{proof}

Now,  we are ready to prove Proposition \ref{pr2}.

{\bf Proof of Proposition \ref{pr2}.}
For $T_1$ as in Lemma \ref{lee0}, by virtue of Lemma  \ref{le01} with $p'=3$ and Lemmas \ref{le3}--\ref{lee7}, one  gets that for any $0<T\le T_1$,
\be\ba\notag
\psi(T)\le C_1\left(1+\int_0^T\psi^A dt\right),\ea\ee
with $\psi(t)$ defined in \eqref{bb0}.
In fact, since the the constants $A$ and $C_1$ are independent of $M_1$, we can choose $ M_1=4C_1$. Then for $T^\ast\triangleq \min\{T_1,{M}_1^{-A}\}$, it holds
\be\ba\notag \psi(T^\ast)\le  M_1/2,\ea\ee
which along with  \eqref{hii}, \eqref{b47}, \eqref{b04}, \eqref{b05}, and \eqref{r2} concludes \eqref{d03} and completes the proof of Proposition \ref{pr2}.
\thatsall

\begin{lemma}\la{le6}
Let $T^\ast$ be as in Proposition \ref{pr2}. Then the following estimate holds
\be\ba\la{b52}
\sup_{0\le t\le T^\ast}t\left(\|\sqrt\n\te_t\|_{L^2}^2+\|\na^2\te\|_{L^2}^2\right)\!+\!\int_0^{T^\ast}t(\|\na\te_t\|_{L^2}^2+\de\|\te_t\|_{L^2}^2+\|\na^2\te\|_{{H^1}}^2)dt\le C.
\ea\ee
\end{lemma}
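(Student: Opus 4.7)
My plan is to derive a time-weighted energy estimate for $\theta_t$ by differentiating $(\ref{a1})_3$ in $t$ and testing against $t\theta_t$, and then recover the $\nabla^2\theta$-bounds from the elliptic structure of the temperature equation. Differentiating the third equation of \eqref{a1} in time and using continuity, I arrive at
\begin{equation*}
c_\upsilon\rho(\theta_{tt}+u\cdot\nabla\theta_t)-\kappa\Delta\theta_t+\delta\theta_t = F(\nabla u)_t-(P\div u)_t+c_\upsilon\div(\rho u)(\theta_t+u\cdot\nabla\theta)-c_\upsilon\rho u_t\cdot\nabla\theta.
\end{equation*}
Multiplying by $t\theta_t$, integrating over $\r^2$, using the material-derivative identity $\int\rho(\theta_{tt}+u\cdot\nabla\theta_t)\theta_t\,dx=\tfrac{1}{2}\tfrac{d}{dt}\int\rho\theta_t^2\,dx$, and rewriting $\tfrac{t}{2}\tfrac{d}{dt}(\cdot) = \tfrac{1}{2}\tfrac{d}{dt}(t\,\cdot)-\tfrac{1}{2}(\cdot)$, I obtain a differential inequality of the form
\begin{equation*}
\tfrac{d}{dt}\!\int tc_\upsilon\rho\theta_t^2\,dx + 2t\!\int(\kappa|\nabla\theta_t|^2+\delta\theta_t^2)\,dx \le c_\upsilon\|\sqrt\rho\theta_t\|_{L^2}^2 + 2t|\mathcal R(t)|,
\end{equation*}
where $\mathcal R(t)$ collects the non-energy contributions.

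Each summand of $\mathcal R$ is to be controlled by the weighted Poincar\'e-type inequality \eqref{bb1} applied to $\theta_t$ and $u_t$, combined with Proposition \ref{pr2}. The transport pieces $\int\div(\rho u)(\theta_t+u\cdot\nabla\theta)\theta_t\,dx$ and $\int\rho u_t\cdot\nabla\theta\,\theta_t\,dx$ yield products of $\|\sqrt\rho\theta_t\|_{L^2}$ and $\|\nabla u_t\|_{L^2}$ times weighted $L^p$-norms of $\theta_t$, $u$, $u_t$; the viscous-heating term $\int F(\nabla u)_t\theta_t\,dx$ is controlled by $\|\nabla u\bar x^{\alpha/2}\|_{L^2}\|\nabla u_t\|_{L^2}\|\theta_t\bar x^{-\alpha/2}\|_{L^{2+\epsilon}}$; and $\int(P\div u)_t\theta_t\,dx$ is expanded using \eqref{pf} so as to eliminate $P_t$ in favour of $F(\nabla u)$, $\kappa\Delta\theta$, $\delta\theta$, and $P\div u$. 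The most delicate summand is the quadratic-in-$\theta_t$ expression $R\int\rho\theta_t^2\div u\,dx$, bounded by $\|\div u\|_{L^\infty}\|\sqrt\rho\theta_t\|_{L^2}^2$ with $\|\div u\|_{L^\infty}\le C\|\nabla u\|_{W^{1,q}}\le C\psi^A(\|\nabla u_t\|_{L^2}+\|\sqrt\rho\theta_t\|_{L^2}+1)$ via \eqref{r2}. Gathering everything, absorbing $\tfrac{\kappa t}{2}\|\nabla\theta_t\|_{L^2}^2$ into the dissipation, and invoking Gronwall together with the integrability $\int_0^{T^\ast}(\|\nabla u_t\|_{L^2}^2+\|\sqrt\rho\theta_t\|_{L^2}^2)\,dt\le C$ from \eqref{d03} closes the bounds on $\sup_t t\|\sqrt\rho\theta_t\|_{L^2}^2$ and $\int_0^{T^\ast}t(\|\nabla\theta_t\|_{L^2}^2+\delta\|\theta_t\|_{L^2}^2)\,dt$.

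The remaining pieces of \eqref{b52} follow from elliptic regularity applied to $\kappa\Delta\theta=c_\upsilon\rho\dot\theta+P\div u-F(\nabla u)+\delta\theta$. All right-hand side terms except $\rho\theta_t$ are $L^\infty_tL^2_x$-controlled by \eqref{d03}, so $\sqrt t\|\nabla^2\theta\|_{L^2}\le C\sqrt t\|\rho\|_{L^\infty}^{1/2}\|\sqrt\rho\theta_t\|_{L^2}+C$ gives the pointwise-in-time bound. For $\|\nabla^3\theta\|_{L^2}$, I would apply the $L^2$-estimate to the spatially differentiated temperature equation; the only new ingredient is $\|\nabla\rho\,\theta_t\|_{L^2}\le\|\nabla(\bar x^a\rho)\|_{L^q}\|\theta_t\bar x^{-1}\|_{L^{2q/(q-2)}}$, whose second factor is controlled by $\|\sqrt\rho\theta_t\|_{L^2}+\|\nabla\theta_t\|_{L^2}$ via Lemma \ref{w5}, so its $t$-weighted time integral is finite by the first part. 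The main obstacle is the coupling in $(P\div u)_t$: after substitution via \eqref{pf}, one must carefully ensure the quadratic-in-$\theta_t$ term is absorbable, and since $\theta_t$ itself has no direct $L^2$-bound (only $\sqrt\rho\theta_t$), every step must go through the weighted Poincar\'e inequality of Lemma \ref{w5} and the density decay $\bar x^a\rho\in L^\infty$.
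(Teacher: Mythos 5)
Your overall strategy mirrors the paper's: time-differentiate $\eqref{a1}_3$, test against $\theta_t$ with a $t$-weight, use weighted Poincar\'e estimates and Proposition \ref{pr2} to absorb the nonlinear terms, then recover $\nabla^2\theta$ and $\nabla^3\theta$ from elliptic regularity. The two small procedural differences (multiplying by $t\theta_t$ directly versus first deriving a clean differential inequality \eqref{lg} and then multiplying by $t$; expanding $(P\div u)_t$ via \eqref{pf} rather than via $\rho_t=-\div(\rho u)$) are cosmetic and either route would close, provided the individual term estimates hold.

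The concrete gap is in your bound for the viscous-heating term. You claim
\begin{equation*}
\left|\int F(\nabla u)_t\,\theta_t\,dx\right|\le C\|\nabla u\bar x^{\alpha/2}\|_{L^2}\,\|\nabla u_t\|_{L^2}\,\|\theta_t\bar x^{-\alpha/2}\|_{L^{2+\epsilon}},
\end{equation*}
but the H\"older triplet $(2,2,2+\epsilon)$ has $\tfrac12+\tfrac12+\tfrac{1}{2+\epsilon}>1$, so this is not a valid application of H\"older's inequality. Moreover, Lemma \ref{w5} applied with weight $\bar x^{-\alpha/2}$ gives control in $L^{(4+2\epsilon)/\alpha}$, not $L^{2+\epsilon}$, and since $\alpha\le 1$ there is no way to interpolate down to $L^{2+\epsilon}$ on the whole plane with only $L^2$-type control on $\theta_t$. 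The paper's actual estimate \eqref{b59} is genuinely more delicate: it splits $|\nabla u|=|\nabla u\bar x^{\alpha/2}|^{1/2}|\nabla u|^{1/2}$ and $|\theta_t|=|\theta_t\bar x^{-\alpha/4}|$ so that the $\bar x$-powers cancel exactly, then applies a four-way H\"older $(4,\tfrac{16}{4-\alpha},2,\tfrac{16}{\alpha})$ yielding
\begin{equation*}
|L_5|\le C\|\nabla u_t\|_{L^2}\,\|\nabla u\bar x^{\alpha/2}\|_{L^2}^{1/2}\,\|\theta_t\bar x^{-\alpha/4}\|_{L^{16/\alpha}}\,\|\nabla u\|_{L^2}^{(4-\alpha)/8}\,\|\nabla u\|_{H^1}^{\alpha/8},
\end{equation*}
and only the weighted exponent $16/\alpha\ge 16$ is reachable via \eqref{e5}. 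Without this fractional splitting and the interpolation $\|\nabla u\|_{L^{8/(4-\alpha)}}\le\|\nabla u\|_{L^2}^{(4-\alpha)/4}\|\nabla u\|_{H^1}^{\alpha/4}$, the $L_5$-term cannot be absorbed into $\epsilon\|\nabla\theta_t\|_{L^2}^2+C(\|\sqrt\rho\theta_t\|_{L^2}^2+\|\nabla u_t\|_{L^2}^2)$, and your Gronwall closure fails at precisely the term that the weighted $L^2$-bound on $\nabla u$ was introduced to handle.
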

\begin{proof}
First, differentiating    \eqref{a1}$_3$ with respect to $t$ yields
\be\ba\la{b53}
&c_\upsilon\n(\te_{tt}+u\cdot\na \te_t)-\ka\Delta\te_t+\de\te_t+c_\upsilon\n_t(\te_{t}+u\cdot\na \te)+c_\upsilon\n u_t\cdot\na\te\\
&+P_t\div u+P\div u_t-F(\na u)_t=0.
\ea\ee
Multiplying (\ref{b53}) by $ \te_t$ and integrating the resulting equality over $\r^2$, we obtain that
\be\la{b54}\ba
& \frac{c_\upsilon}{2}\frac{d}{dt}\int \n |\te_t|^2dx +\int( \ka|\na\te_t|^2 +\de|\te_t|^2)dx\\
&= -c_\upsilon\int\n_t|\te_t|^2dx-   \int\n_t\te_t(c_\upsilon u\cdot\na\te+R\te\div u) dx-R\int \n\te\te_t\div u_t dx\\
&\quad-   \int\n\te_t(c_\upsilon u_t\cdot\na\te+R\te_t\div u) dx+\int F(\na u)_t\te_t dx\triangleq \sum_{i=1}^5L_i.
\ea\ee
It follows from \eqref{bb1}--\eqref{uu1},  \eqref{b111}, \eqref{b05}, \eqref{b112},  and Proposition \ref{pr2}  that for any  $t\in(0,T^\ast]$, 
\be\ba\la{b55}
|L_1|&\le C\int \n|u||\te_t||\na\te_t|dx\\
     &\le C\|\na\te_t\|_{L^2}\|\sqrt\n \te_t\|_{L^2}\|\n\bar x\|_{L^\infty}^{1/2}\|u\bar x^{-{1/2}}\|_{L^\infty}\\
     &\le \epsilon\|\na\te_t\|_{L^2}^2+C(\epsilon)\|\sqrt\n \te_t\|_{L^2}^2,
\ea\ee
\be\ba\la{b56}
|L_2|&\le C\int\n|u||\te_t|(|\na u||\na\te|+|u||\na^2\te|+\te|\na^2 u|)dx\\
     &\quad+C\int\n|u||\na\te_t|(|u||\na\te|+\te|\na u|)dx\\
     &\le C\|\sqrt\n \te_t\|_{L^2}\|\rho^{1/4} u\|_{L^{\infty}}(\|\rho^{1/4} u\|_{L^{\infty}}\|\na^2\te\|_{L^2}+\|\rho^{1/4} \te\|_{L^{\infty}}\|\na^2 u\|_{L^2})\\
     &\quad+C\|\sqrt\n \te_t\|_{L^2}\|\n\bar x\|_{L^\infty}^{1/2}\|u\bar x^{-{1/2}}\|_{L^\infty}\|\na u\|_{L^4}\|\na\te\|_{L^2}^{1/2}\|\na\te\|_{H^1}^{1/2}\\
     &\quad+C\|\na\te_t\|_{L^2}\|\rho^{1/2} u\|_{L^6}(\|\rho^{1/2} u\|_{L^6}\|\na\te\|_{L^6}+\|\rho^{1/2} \te\|_{L^6}\|\na u\|_{L^6})\\
     &\le \epsilon\|\na\te_t\|_{L^2}^2+C(\epsilon)(\|\sqrt\n \te_t\|_{L^2}^2+1),
\ea\ee
\be\ba\la{b57}
|L_3|&\le C\|\na u_t\|_{L^2}\|\n\bar x\|_{L^6}\| \te\bar x^{-{1/2}}\|_{L^6}\| \te_t\bar x^{-{1/2}}\|_{L^6}\\
     &\le C\|\na u_t\|_{L^2}(\|\sqrt\n \te_t\|_{L^2}+\|\na\te_t\|_{L^2})\\
     &\le \epsilon\|\na\te_t\|_{L^2}^2+C(\epsilon)(\|\na u_t\|_{L^2}^2+\|\sqrt\n \te_t\|_{L^2}^2),
\ea\ee
\be\ba\la{b58}
|L_4|&\le C\|\te_t\bar x^{-{1/2}}\|_{L^6}\|\n\bar x\|_{L^\infty}^{1/2}(\|\sqrt\n\te_t\|_{L^2}\|\na u\|_{L^3}+\|\sqrt\n u_t\|_{L^2}\|\na \te\|_{L^3})\\
    &\le C(\|\sqrt\n \te_t\|_{L^2}+\|\na\te_t\|_{L^2})(\|\sqrt\n \te_t\|_{L^2}+\|\na\te\|_{H^1}^{1/3})\\
    &\le \epsilon\|\na\te_t\|_{L^2}^2+C(\epsilon)(\|\sqrt\n \te_t\|_{L^2}^2+1),
\ea\ee
and
\be\ba\la{b59}
|L_5|
&\le C\|\na u_t\|_{L^2}\|\na u\bar x^{\al/2}\|_{L^2}^{1/2}\|\te_t\bar x^{-{\al/4}}\|_{L^{16/\al}}\|\na u\|_{L^2}^{(4-\al)/8}\|\na u\|_{H^1}^{\al/8}\\
&\le C(\|\sqrt\n\te_t\|_{L^2}+\|\na\te_t\|_{L^2})\|\na u_t\|_{L^2}\\
&\le \epsilon\|\na\te_t\|_{L^2}^2+C(\epsilon)(\|\sqrt\n\te_t\|_{L^2}^2+\|\na u_t\|_{L^2}^2).
\ea\ee
Substituting \eqref{b55}--\eqref{b59} into \eqref{b54} and choosing $\epsilon$ suitably small, one gets
\be\la{lg}\ba
c_\upsilon\frac{d}{dt}\|\sqrt\n\te_t\|_{L^2}^2 +\ka\|\na\te_t\|_{L^2}^2+\de\|\te_t\|_{L^2}^2\le C(\|\sqrt\n \te_t\|_{L^2}^2+\|\na u_t\|_{L^2}^2+1).
\ea\ee

Multiplying \eqref{lg} by $t$, integrating the resulting inequality with respect to $t$ over $(0,T^\ast)$, and using Proposition \ref{pr2} infer
\be\ba\la{b522}
\sup_{0\le t\le T^\ast}t\|\sqrt\n\te_t\|_{L^2}^2
+\int_0^{T^\ast}t(\|\na\te_t\|_{L^2}^2+\de\|\te_t\|_{L^2}^2)dt\le C.
\ea\ee

Moreover,  we deduce from the standard $L^{2}$-estimate to \eqref{a1}$_3$, Proposition \ref{pr2}, \eqref{bb1}--\eqref{uu1}, \eqref{b111}, and \eqref{b05}--\eqref{b112} that
\be\ba\la{der}
&\|\na^2\te\|_{H^{1}}\\
&\le C(\|\na (\n(\te_t+ u \cdot \na \te)) \|_{L^{2}} +\||\na u||\na^2 u|\|_{L^{2}}+\|\na(P\div u)\|_{L^{2}})\\
&\quad+C(\de\|\na\te\|_{L^{2}}+\|\na\te\|_{H^{1}})\\
& \le C(\|\na\n\bar x\|_{L^q}\|\te_t\bar x^{-1}\|_{L^{2q/(q-2)}}+\|\n\|_{L^\infty}\|\na\te_t\|_{L^2})+C\|\n\|_{L^{\infty}}\|\na u\|_{L^4}\|\na\te\|_{L^4}\\
&\quad+C\|\na\n\bar x\|_{L^q}(\|u\bar x^{-1}\|_{L^\infty}\|\na \te\|_{L^{2q/(q-2)}}+\|\te\bar x^{-1}\|_{L^\infty}\|\na u\|_{L^{2q/(q-2)}})\\
&\quad+C\|\n\bar x\|_{L^\infty}(\|u\bar x^{-1}\|_{L^\infty}\|\na ^2\te\|_{L^{2}}+\|\te\bar x^{-1}\|_{L^\infty}\|\na^2 u\|_{L^{2}})\\
&\quad+C\|\na u\|_{L^\infty}\|\na^2 u\|_{L^2}+C\|\sqrt\n\te_t\|_{L^2}+C\\
&\le C(\|\te_t\bar x^{-1}\|_{L^{2q/(q-2)}}+\|\na\te_t\|_{L^2}+\|\na\te\|_{H^1}+\|\sqrt\n\te_t\|_{L^2}+\|\na u\|_{W^{1,q}}+1)\\
&\le C(\|\sqrt\n\te_t\|_{L^2}+\|\na u_t\|_{L^2}+\|\na\te_t\|_{L^2}+1),\ea\ee
which as well as  \eqref{b05}, \eqref{d03}, and \eqref{b522} shows \eqref{b52}.
The proof of Lemma \ref{le6} is finished.
\end{proof}


\subsection{The vanishing damping of $\te$}\la{sec3.3}
With all the a priori estimates in Subsection \ref{sec3.2}  in mind, we are ready to prove Theorem \ref{th1}.

{\bf Proof of Theorem \ref{th1}.} Let $(\n_0\geq 0,u_0,\te_0\geq 0)$ satisfying conditions \eqref{q4} and  \eqref{co1} be the initial data as described in Theorem \ref{th1}. Without loss of generality, assume that there exists a constant $N_1\geq 1$ such that 
$$\int_{B_{N_1}}\n_0 dx\geq 1/2.$$
We construct approximate initial data $(\n_0^\de,u_0^\de,\te_0^\de)=(\n_0,u_0^\de,\te_0^\de)$. 

In fact, since $\na \te_0\in L^2$, there exists a sequence $v^\de\in C_0^\infty$ such that
\be\la{sb}\lim_{\de\rightarrow 0}\|v^\de-\na\te_0\|_{L^2}=0.\ee
Then we choose $\te_0^\de$ as the unique smooth solution to the following elliptic equation:
\be\notag-\Delta\te_0^\de+\de\te_0^\de+\n_0^\de\te_0^\de=-\div v^\de+\n_0\te_0,\quad\text{in}\,\r^2.\ee

According to  the procedures in \cite{liliang}, \eqref{sb}, and \eqref{q4}, there exists a positive constant $C$ independent of $\de$ such that
\be\la{47} \|\na \te^\de_0\|_{L^2}^2+\de\|\te_0^\de\|_{L^2}^2+\|\sqrt{\n_0^\de} \te_0^\de\|_{L^2}^2\le C,\ee
and
\be\ba\la{57} &\lim_{\de\rightarrow0}\left(\|\na \te_0^\de-\na \te_0\|_{L^2}+\|\sqrt{\n_0^\de}\te_0^\de-\sqrt{\n_0}\te_0\|_{L^2}\right)=0.
\ea\ee

Next, consider $u_0^\de$ as the unique smooth solution to the following elliptic equation:
\be\notag-\mu\Delta u_0^\de-(\mu+\lambda)\na\div u_0^\de+R\na(\n_0^\de\te_0^\de)+\n_0^\de u_0^\de=\n_0^{1/2}g+\n_0 u_0,\quad \text{in}\,\r^2.\ee
Analogously, by \eqref{q4}, \eqref{co1}, \eqref{47}, and \eqref{57}, we derive
\be\la{u47} \|\na u^\de_0\|_{H^1}^2+\|\sqrt{\n_0^\de}u_0^\de\|_{L^2}^2\le C,\ee
and
\be\ba\la{u57}\lim_{\de\rightarrow0}\left(\|\na u_0^\de-\na u_0\|_{H^1}+\|\sqrt{\n_0^\de}u_0^\de-\sqrt{\n_0}u_0\|_{L^2}\right)=0.
\ea\ee

Moreover, for initial data $(\n_0^\de,u_0^\de,\te_0^\de)$,  $g$ defined in  \eqref{co1} in fact is replaced by $g^\de\triangleq g+\sqrt{\n_0}u_0-\sqrt{\n_0^\de}u_0^\de$ satisfying
\be\ba\la{pp}\|g^\de\|_{L^2}\le C,\ea\ee
due to \eqref{q4}, \eqref{co1}, and \eqref{u47}.

Now, for the approximate initial data $(\n_0^\de,u_0^\de,\te_0^\de)$ satisfying  \eqref{47}, \eqref{u47}, and \eqref{pp} with fixed $\de>0$, we deduce from Lemma \ref{th3} that there exists a strong solution $(\n^\de,u^\de,\te^\de)$ of (\ref{a1}) (\ref{q2}) (\ref{q3}) on $\r^2\times (0,T_\de]$ for some $T_\de>0$. Furthermore, it follows from Proposition \ref{pr2} and Lemma \ref{le6}  that there exist positive constants $T^\ast$, $M_1$, $\tilde M_1$, and $C$ independent of $\de$ such that \eqref{d01}, \eqref{d03}, and \eqref{b52} all hold with $(\n,u,\te)$ being replaced by $(\n^\de,u^\de,\te^\de)$. Then passing to the limit $\de\rightarrow  0$ together with \eqref{57}, \eqref{u57}, and standard compactness arguments leads to that there exists a strong solution  $(\n,u,\te)$ of the Cauchy problem (\ref{q1}) (\ref{q2}) (\ref{q3}) on $\r^2\times (0,T^\ast]$ satisfying \eqref{q5} and \eqref{q7} except that   $\te\in L^2(0,T^\ast;L^{ q_2})$.

Indeed, 
 proceeding as the procedures in Lemma \ref{lee0} , we can  obtain the norm of $\te$ itself smoothly.  To be precise, adding \eqref{q1}$_2$ multiplied by $u$ to \eqref{q1}$_3$ and using \eqref{q1}$_1$ show that
\be\la{m0}\ka\Delta\te=\n\dot w-\frac{\mu}{2}\Delta |u|^2-\mu\div(u\cdot\na u)-\lambda\div(u\div u)+\div(Pu),\ee
with $w=c_\nu \te+|u|^2/2$,
which together with \eqref{q5} implies that for $t\in (0,T^\ast)$,
\be\la{m1}\int \n\dot w (x,t)dx=0.\ee
Then \eqref{m0} leads to
\be\ba\notag
\ka\na\te&=-\na (-\Delta)^{-1}(\n\dot w)-\frac{\mu}{2}\na |u|^2+\na (-\Delta)^{-1}\div(\mu u\cdot\na u+\lambda u\div u-Pu),\ea\ee
which is equivalent to
$$\ka\na \te\triangleq \tilde K\ast (\n\dot w)-\frac{\mu}{2}\na |u|^2+\tilde{\mathcal{T}}(\mu u\cdot\na u+\lambda u\div u-Pu),$$
 where $\tilde{\mathcal{T}}=\na(-\Delta)^{-1}\div$ is the singular integral operator of convolution type with Calderon-Zygmund kernel which is bounded in $L^p\,(1<p<\infty)$ and the kernel $\tilde K(x)$ satisfies
\be\notag|\tilde K(x)|\le C|x|^{-1},\quad|\na\tilde K(x)|\le C|x|^{-2}.\ee
Now for any $x\neq0$, $t\in (0,T^\ast)$, and $\al\triangleq \min\{1,a/2\}$, by \eqref{m1} and the method in Lemma \ref{lee0}, it holds
\be\notag\ba
|\na \te(x,t)
&|\le C|x|^{-\alpha}\mathcal{I}_1(|y|^\alpha|\n\dot w|)+C|x|^{-1-\alpha}\||y|^\alpha \n\dot w\|_{L^1}\\
&\quad+C|u||\na u|+|\tilde {\mathcal{T}}(\mu u\cdot\na u+\lambda u\div u-Pu)|.
\ea\ee
Hence, for $q_3\triangleq 3/(1+\al)\in[3/2,2)$, one has
\be\ba\la{m2}
\|\na\te\|_{L^{q_3}}
&\le \|\na\te\|_{L^{q_3}(B_1)}+C\||x|^{-\alpha}\mathcal{I}_1(|y|^\alpha|\n\dot w|)\|_{L^{q_3}(\r^2\setminus{B_1})}\\
&\quad +C\||x|^{-1-\alpha}\|_{L^{q_3}(\r^2\setminus{B_1})}\||y|^\alpha \n\dot w\|_{L^1}+C(\||u||\na u|\|_{L^{ q_3}}+\|\n\te u\|_{L^{ q_3}})\\
&\le C\|\na\te\|_{L^2}+C\||x|^{-\alpha}\|_{L^{{3/\al}}(\r^2\setminus B_1)}\|\mathcal{I}_1(|y|^\alpha|\n\dot w|)\|_{L^3}\\
&\quad +C\||y|^\alpha \n\dot w\|_{L^1}+C\|u\|_{L^{q_1}}(\|\na u\|_{L^{12/(4+\al)}}+\|\n\te\|_{L^{12/(4+\al)}})\\
&\le C+C\||y|^\alpha |\n\dot w|\|_{L^{6/5}}+C\||y|^\alpha \n\dot w\|_{L^1}\\
&\le C(\|\sqrt\n\te_t\|_{L^2}+1),
\ea\ee
where we have used \eqref{q5} and the following fact: for any $p\in[1,2)$,
\be\ba\notag
&\||y|^\al \n\dot w\|_{L^p}\\
&\le C(\||y|^\al\n \te_t\|_{L^p}+\||y|^\al\n u\cdot\na \te\|_{L^p}+\||y|^\al\n |u|^2_t\|_{L^p}+\||y|^\al\n u\cdot\na |u|^2\|_{L^p})\\
&\le C\|\sqrt\n \te_t\|_{L^2}\|\n\bar y^{2\al}\|_{L^{p/(2-p)}}^{1/2}+C\|\na \te\|_{L^2}\|\n\bar y^\al\|_{L^{2p/(2-p)}}\|u\|_{L^\infty}\\
&\quad+C\|\sqrt\n u_t\|_{L^2}\|\n\bar y^{2\al}\|_{L^{p/(2-p)}}^{1/2}\|u\|_{L^\infty}+C\|\na u\|_{L^2}\|\n\bar y^\al\|_{L^{2p/(2-p)}}\|u\|_{L^\infty}^2\\
&\le C(\|\sqrt\n \te_t\|_{L^2}+1),
\ea\ee
due to \eqref{q5}.
Combining \eqref{m2} with \eqref{q5} yields that for $q_2\triangleq 2q_3/(2-q_3)=6/(2\al-1)$,
\be\notag\int_0^{T^\ast}\|\te\|_{L^{q_2}}^2dt\le C\int_0^{T^\ast}\|\na\te\|_{L^{q_3}}^2dt\le C. \ee

Finally, since the proof of the uniqueness of $(\n,u,\te)$ is similar to that in \cite{liliang}, it will be omitted for simplicity.  The proof of Theorem \ref{th1} is finished.
\thatsall

\section{Proof of Theorem \ref{th2}}\la{sec4}
Let $(\n,u,\te)$ be a strong solution to the Cauchy problem \eqref{q1} \eqref{q2} \eqref{q3} on $\r^2\times(0,T^\ast]$ described in Theorem \ref{th1}. Suppose that \eqref{qq6} were false, that is,
\be\la{t3}\lim_{T\rightarrow T_\ast}\|\div u\|_{L^1(0,T;L^\infty)}\le M_0<+\infty.\ee
This as well as \eqref{q1}$_1$ immediately leads to the following  upper bound of the density  (see \cite[Lemma 3.4]{hlx}).
\begin{lemma}\la{le00}
Under the condition (\ref{t3}), it holds that for $0\le T< T_\ast$,
\be\la{t4} \sup_{0\le t\le T}\|\n\|_{L^\infty}\le C,\ee
where (and in what follows) $C,\,C_i\,(i=2,\cdots,5)$ denote some generic constants depending only on $\mu$, $\lambda$, $R,$ $c_\upsilon,\,\ka,\,q,\,a,\,\eta_0,\,T_\ast,\,M_0$, and the initial data.
\end{lemma}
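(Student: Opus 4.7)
The plan is to integrate the continuity equation along particle trajectories. Rewrite $\eqref{q1}_1$ in non-conservative form as
\begin{equation*}
\n_t + u\cdot\na\n = -\n\,\div u,
\end{equation*}
and define the flow map $X(t,x)$ by $\frac{d}{dt}X(t,x) = u(X(t,x),t)$ with $X(0,x) = x$. Because the regularity of $u$ in \eqref{q5} gives $\na u \in L^1(0,T;L^\infty)$ locally in $T$ via Sobolev/embedding on $\na u\in L^\infty_tH^1\cap L^2_tW^{1,q}$, the trajectories are well-defined and cover $\r^2$ for each $t<T_\ast$, so this reformulation is rigorous.

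Along each trajectory, the equation above becomes the ODE
\begin{equation*}
\frac{d}{dt}\n(X(t,x),t) = -\n(X(t,x),t)\,\div u(X(t,x),t),
\end{equation*}
which integrates to
\begin{equation*}
\n(X(t,x),t) = \n_0(x)\exp\!\Big(-\!\int_0^t \div u(X(s,x),s)\,ds\Big).
\end{equation*}
Taking absolute values and the supremum over $x$, and bounding the time integral by its $L^\infty_x$ norm, I obtain
\begin{equation*}
\|\n(\cdot,t)\|_{L^\infty} \le \|\n_0\|_{L^\infty}\exp\!\big(\|\div u\|_{L^1(0,t;L^\infty)}\big).
\end{equation*}

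Finally, invoking hypothesis \eqref{t3} to bound the exponent by $M_0$, together with $\|\n_0\|_{L^\infty}\le C$ coming from the initial-data assumption $\bar x^a\n_0\in W^{1,q}$ (which embeds into $L^\infty$ for $q>2$), yields the uniform estimate $\sup_{0\le t\le T}\|\n\|_{L^\infty}\le \|\n_0\|_{L^\infty}e^{M_0} \le C$ for all $T<T_\ast$. There is no real obstacle here; the only mild point to verify is that the formal computation along characteristics is justified by the regularity of $u$ available from \eqref{q5}, which in turn guarantees that the Lagrangian and Eulerian formulations agree.
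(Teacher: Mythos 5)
Your proof is correct and is essentially the argument the paper relies on: the paper simply cites \cite{hlx}, Lemma 3.4, whose proof is exactly this integration of $\eqref{q1}_1$ along particle trajectories, giving $\|\n(\cdot,t)\|_{L^\infty}\le\|\n_0\|_{L^\infty}\exp(\|\div u\|_{L^1(0,t;L^\infty)})\le Ce^{M_0}$. Your justifications of $\|\n_0\|_{L^\infty}\le C$ (via $W^{1,q}\hookrightarrow L^\infty$, $q>2$, and $\bar x\ge 1$) and of the well-posedness of the flow map (via $\na u\in L^2(0,T;W^{1,q}\cap L^2)\hookrightarrow L^1(0,T;L^\infty)$) are both sound.
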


\begin{lemma}\la{le55} Under the condition (\ref{t3}), it holds that for $0\le T< T_\ast$,
\be \la{a2.12}\sup_{0\le t\le T}\left( \|\n\te\|_{L^1}+\|\sqrt\n u\|_{L^2}^2\right)+\int_0^T\|\na u\|_{L^2}^2 dt\le C.\ee
\end{lemma}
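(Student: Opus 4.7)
The plan is to extract Lemma \ref{le55} from the classical total-energy identity plus the kinetic-energy identity, exploiting only the assumption \eqref{t4} on the density and the hypothesis \eqref{t3}.

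First I would multiply $\eqref{q1}_2$ by $u$ and integrate by parts over $\r^2$, using the continuity equation $\eqref{q1}_1$ to rewrite the convective terms, to obtain the kinetic-energy identity
\be\notag
\frac{1}{2}\frac{d}{dt}\int \n |u|^2\,dx+\int\bigl(\mu|\na u|^2+(\mu+\lambda)(\div u)^2\bigr)dx=\int P\div u\,dx.
\ee
Integrating $\eqref{q1}_3$ directly over $\r^2$ (the term $\ka\int\Delta\te\,dx$ vanishes thanks to the far-field decay guaranteed by \eqref{q5}) gives the internal-energy identity
\be\notag
c_\upsilon\frac{d}{dt}\int\n\te\,dx=\int F(\na u)\,dx-\int P\div u\,dx.
\ee
Adding the two identities cancels the pressure-work terms and also the dissipation (using $\int 2\mu|\mathfrak{D}(u)|^2+\lambda(\div u)^2\,dx=\int\mu|\na u|^2+(\mu+\lambda)(\div u)^2\,dx$), yielding total-energy conservation
\be\notag
\frac{d}{dt}\left(\frac{1}{2}\int\n|u|^2\,dx+c_\upsilon\int\n\te\,dx\right)=0.
\ee
Since $\te\ge 0$ by Theorem \ref{th1}, both contributions are non-negative, so this identity delivers the first half of \eqref{a2.12}, namely the uniform bound on $\|\sqrt\n u\|_{L^2}^2+\|\n\te\|_{L^1}$.

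For the dissipation bound, I would integrate the kinetic-energy identity in time over $(0,T)$ and estimate the sole source term by
\be\notag
\left|\int_0^T\!\!\int P\div u\,dx\,dt\right|\le\int_0^T\|\div u\|_{L^\infty}\|P\|_{L^1}\,dt=R\int_0^T\|\div u\|_{L^\infty}\|\n\te\|_{L^1}\,dt.
\ee
Using the total-energy bound for $\|\n\te\|_{L^1}$ and the hypothesis \eqref{t3} to bound $\int_0^T\|\div u\|_{L^\infty}\,dt\le M_0$, this right-hand side is at most a constant depending on the initial data and $M_0$. Combined with the physical restriction $\mu+\lambda\ge 0$ from \eqref{h3}, which ensures the $(\div u)^2$ term on the left is non-negative, we conclude $\int_0^T\|\na u\|_{L^2}^2\,dt\le C$, completing the proof.

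There is no real obstacle here: the assumption \eqref{t3} together with the sign condition $\te\ge 0$ makes everything work at the level of the basic energy identity, and the Laplacian of $\te$ drops out upon integration. The only point requiring minor care is verifying that all boundary-at-infinity terms vanish, which is justified by the regularity class \eqref{q5} established in Theorem \ref{th1}.
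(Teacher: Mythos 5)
Your proposal is correct and follows essentially the same route as the paper: the paper adds $2u\cdot\eqref{q1}_2$ to $\eqref{q1}_3$, integrates, uses $\int F(\na u)\,dx=\int\bigl(\mu|\na u|^2+(\mu+\lambda)(\div u)^2\bigr)dx$ and $\bigl|\int P\div u\,dx\bigr|\le R\|\div u\|_{L^\infty}\|\n\te\|_{L^1}$, and closes with Gronwall and \eqref{t3}. Your variant merely reorganizes this into exact total-energy conservation (weight $u$ instead of $2u$, so no Gronwall is needed) followed by a second pass through the kinetic-energy identity for the dissipation bound; the ingredients and conclusion are identical.
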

\begin{proof}
Adding $(\ref{q1})_2$ multiplied by $2u$ to $(\ref{q1})_3$, we obtain
after integrating the resulting equality over $\r^2$    that
\be\notag\ba
\frac{d}{dt} \int\left(c_\upsilon\n\te+ \n |u|^2\right)dx+\mu\|\na u\|_{L^2}^2+(\mu+\lambda)\|\div u\|_{L^2}^2\le R\|\div u\|_{L^\infty}\int\n\te dx,\ea\ee
which along with Gronwall's inequality and \eqref{t3} yields \eqref{a2.12} directly.
 The proof of Lemma \ref{le55} is  finished.
\end{proof}

\begin{lemma}\la{le7}
Under the condition (\ref{t3}), it holds that for $0\le T< T_\ast$,
\be \la{a2.1}\sup_{0\le t\le T}\left( \|\sqrt\n\te\|_{L^2}^2+\|\na u\|_{L^2}^2\right)+\int_0^T\left(\|\na \te\|_{L^2}^2+\|\sqrt\n\dot u\|_{L^2}^2\right) dt\le C.\ee
\end{lemma}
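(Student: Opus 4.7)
The plan is a Hoff-type higher-order energy estimate, combining the momentum equation tested by $\dot u=u_t+u\cdot\na u$ with the temperature equation tested by $\te$, and absorbing the coupling via the effective-viscous-flux estimates of Lemma \ref{w6}, the density bound of Lemma \ref{le00}, and the basic energy of Lemma \ref{le55}. Multiplying \eqref{q1}$_2$ by $\dot u$ and integrating over $\r^2$, the viscous piece supplies $\tfrac12\tfrac{d}{dt}\int(\mu|\na u|^2+(\mu+\lambda)(\div u)^2)dx+\int\n|\dot u|^2dx$ together with the cubic commutator $\int(\p_iu^j\p_iu^k\p_ku^j-\tfrac12\div u|\na u|^2)dx$, while the pressure term $-\int P\div\dot u\,dx$ is rewritten using the identity $\div\dot u=(\div u)_t+u\cdot\na\div u+\p_iu^j\p_ju^i$ and the pressure evolution $P_t+\div(Pu)=c_\upsilon^{-1}R(F(\na u)-P\div u+\ka\Delta\te)$ already used in \eqref{pf}. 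After the natural cancellation this produces $-\tfrac{d}{dt}\int P\div u\,dx$ plus remainders involving $\int F(\na u)\div u$, $\int P(\div u)^2$, and $\int\na\te\cdot\na\div u$.

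In parallel, testing \eqref{q1}$_3$ by $\te$ yields $\tfrac{c_\upsilon}{2}\tfrac{d}{dt}\|\sqrt\n\te\|_{L^2}^2+\ka\|\na\te\|_{L^2}^2=\int F(\na u)\te\,dx-R\int\n\te^2\div u\,dx$, the last term being directly dominated by $R\|\div u\|_{L^\infty}\|\sqrt\n\te\|_{L^2}^2$ and hence Gronwall-compatible under \eqref{t3}. For the cubic $\int|\na u|^3$ and the $\int F(\na u)\div u$, $\int P(\div u)^2$ remainders from the momentum side, I would invoke Lemma \ref{w6}: with $\|\n\|_{L^\infty}\le C$ from Lemma \ref{le00} and Lemma \ref{w5} applied to $\te$, one has $\|\n\te\|_{L^p}\le C(\|\sqrt\n\te\|_{L^2}+\|\na\te\|_{L^2})$, hence $\|\na u\|_{L^6}\le C\|\sqrt\n\dot u\|_{L^2}^{2/3}(\|\na u\|_{L^2}+\|\n\te\|_{L^2})^{1/3}+C\|\n\te\|_{L^6}$. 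The interpolation $\|\na u\|_{L^3}^3\le\|\na u\|_{L^2}\|\na u\|_{L^6}^2$ together with Young's inequality then absorbs $\ve(\|\sqrt\n\dot u\|_{L^2}^2+\|\na\te\|_{L^2}^2)$ back to the left, while $\int\na\te\cdot\na\div u$ is handled via $(2\mu+\lambda)\na\div u=\na G+\na P$ together with the elliptic bound $\|\na G\|_{L^2}\le C\|\n\dot u\|_{L^2}$ from \eqref{h19}.

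The main obstacle will be the pure term $\int F(\na u)\te\,dx$ on the right-hand side of the temperature identity, because $\te$ admits no intrinsic $L^p(\r^2)$ control. My plan is to choose a small $\eta>0$ and apply Lemma \ref{w5} to get $\|\te\bar x^{-\eta}\|_{L^{(2+\ve)/\eta}}\le C(\|\sqrt\n\te\|_{L^2}+\|\na\te\|_{L^2})$, then pair it against a weighted $L^{r'}$ bound on $|\na u|^2\bar x^{\eta}$ obtained by interpolating the plain $L^2$ norm of $\na u$ (from Lemma \ref{le55}) against the weighted norm $\|\na u\bar x^{\al/2}\|_{L^\infty_TL^2}\le C$ already guaranteed by \eqref{q5}. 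After H\"older's and Young's inequalities this should deliver $|\int F(\na u)\te\,dx|\le\ve(\|\sqrt\n\dot u\|_{L^2}^2+\|\na\te\|_{L^2}^2)+C(\ve)(1+\|\div u\|_{L^\infty})(1+\|\na u\|_{L^2}^2)(1+\|\sqrt\n\te\|_{L^2}^2)$, which is absorbable.

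Adding the two energy identities and absorbing all the $\ve$-terms, noting that $|{-}\!\int P\div u\,dx|\le\|\n\|_{L^\infty}^{1/2}\|\sqrt\n\te\|_{L^2}\|\div u\|_{L^2}\le C$ (so the auxiliary functional differs from $\|\na u\|_{L^2}^2$ only by a controlled constant), the remaining differential inequality takes the schematic form $\tfrac{d}{dt}E(t)+D(t)\le C(1+\|\div u\|_{L^\infty})(1+E(t))$ with $E=\|\na u\|_{L^2}^2+\|\sqrt\n\te\|_{L^2}^2$ and $D=\|\sqrt\n\dot u\|_{L^2}^2+\|\na\te\|_{L^2}^2$. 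Gronwall's inequality with the integrable factor $\|\div u\|_{L^\infty}\in L^1(0,T)$ from \eqref{t3} then closes the argument and yields \eqref{a2.1}.
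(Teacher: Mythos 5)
Your overall architecture (momentum equation tested by the material derivative, energy equation tested by $\te$, effective viscous flux $G$ with \eqref{h19} to handle $\na\div u$, Gronwall with $\|\div u\|_{L^\infty}\in L^1(0,T)$) is the same as the paper's, and your treatment of $-R\int\n\te^2\div u\,dx$, of the cubic term via $\|\na u\|_{L^3}^3\le\|\na u\|_{L^2}\|\na u\|_{L^6}^2$ and Lemma \ref{w6}, and of $\int\na\te\cdot\na\div u\,dx$ are all sound. The problem is exactly where you predicted it would be: the term $\int F(\na u)\te\,dx$, and your plan for it has a genuine gap. You propose to pair $\|\te\bar x^{-\eta}\|_{L^{(2+\ve)/\eta}}$ against a weighted norm of $|\na u|^2$ controlled by ``$\|\na u\bar x^{\al/2}\|_{L^\infty_TL^2}\le C$ already guaranteed by \eqref{q5}.'' But \eqref{q5} is the regularity class of the local solution on $(0,T^\ast]$; its norms are finite on every $[0,T]$ with $T<T_\ast$, yet nothing guarantees they stay bounded as $T\to T_\ast$ --- uniform control of such norms up to the maximal time is precisely what the blowup-criterion argument must \emph{establish}, so invoking it here is circular. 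The paper instead re-derives a weighted bound on $\na u$ from scratch in \eqref{yy}--\eqref{ai}, via the cancellation $\int\n\dot u\,dy=0$ and the kernel decomposition of Lemma \ref{lee0}; the price is that the bound reads $\|\na u\bar x^{\al/4}\|_{L^2}\le C(\|\sqrt\n\dot u\|_{L^2}+\|\na\te\|_{L^2}+1)$, i.e.\ it is controlled by the \emph{dissipation}, not by the energy, so your H\"older--Young step would not produce a Gronwall-compatible right-hand side at this stage (it is only usable later, in Lemma \ref{a113.4}, after \eqref{a2.1} is already known).

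Inside Lemma \ref{le7} the paper resolves $\int F(\na u)\te\,dx$ by an entirely different device that is absent from your proposal: it multiplies \eqref{q1}$_2$ by $u\te$ and integrates by parts (estimate \eqref{c4}), which converts $\int(\mu|\na u|^2+(\mu+\lambda)(\div u)^2)\te\,dx$ into $\epsilon(\|\na\te\|_{L^2}^2+\|\sqrt\n\dot u\|_{L^2}^2)+C(\epsilon)(\|\div u\|_{L^\infty}+\|u\|_{L^\infty}^2)(\|\sqrt\n\te\|_{L^2}^2+\|\na u\|_{L^2}^2)$. This introduces $\|u\|_{L^\infty}^2$, which in $\r^2$ is not controlled by $\|\na u\|_{L^2}$; the paper then needs the weighted estimate \eqref{lo}, the $W^{1,q_3}$ bound \eqref{gu}, the Brezis--Wainger logarithmic Sobolev inequality (Lemma \ref{w8}), and the log-Gronwall computation \eqref{c9} with the elementary inequality $xy\le C(x\ln(e+x)+e^y)$ to close. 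None of this machinery appears in your plan, and your claimed final bound $C(\ve)(1+\|\div u\|_{L^\infty})(1+\|\na u\|_{L^2}^2)(1+\|\sqrt\n\te\|_{L^2}^2)$ for $\int F(\na u)\te\,dx$ does not follow from the H\"older argument you sketch. As written, the proposal does not close.
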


\begin{proof}
First, multiplying \eqref{q1}$_3$ by $2\te$ and integrating the resulting equality by parts lead to
\be\la{c1}\ba  &c_\upsilon\frac{d}{dt}\int \n\te^2dx+2\ka\|\na\te\|_{L^2}^2\le C\|\div u\|_{L^\infty}\int\n\te^2 dx+C\int |\na u|^2\te dx.\ea\ee

Next, multiplying $(\ref{q1})_2$  by $2u_t $ and integrating the resulting equality over $\r^2,$  one deduces from (\ref{hj1}) and \eqref{t4} that
\be\ba \la{hh17}
&\frac{d}{dt}\int \left(  {\mu}|\na u|^2+ (\mu+\lambda)(\div u)^2\right)dx+ \frac{3}{2}\int\rho|\dot u|^2dx \\
&\le  2\frac{d}{dt}\int  P \div u  dx-2\int P_t \div udx+C\int \n|u\cdot \na u|^2dx\\
&\le 2\frac{d}{dt}\int  P \div u  dx-\frac{1}{2\mu+\lambda}\frac{d}{dt}\int P^2 dx-\frac{2}{2\mu+\lambda}\int P_t Gdx+ C\|u\|_{L^\infty}^2\|\na u\|_{L^2}^2.\ea\ee

Noticing that (\ref{q1})$_3$ is equivalent to
\be \notag c_\upsilon\ba P_t=-c_\upsilon\div (Pu)-RP\div u+R\ka\Delta\te+RF(\na u),
 \ea\ee
which combined with \eqref{t4} and \eqref{h19} infers
\be\la{a16}\ba
\frac{2}{2\mu+\lambda}\left|\int   P_t Gdx\right| 
& \le C\int (P|u|+|\na\te|)|\na G|dx+ \int( P|\div u|+F(\na u))|G|dx
\\ &  \le  C (\|\n\te u\|_{L^2}+\|\na \te\|_{L^2})\|\na G\|_{L^2} \\
&\quad+ C \|\div u\|_{L^\infty}(\|\sqrt\n\te\|_{L^2}^2+ \|\na u\|_{L^2}^2)+C\int  |\na u|^2\te dx\\
&  \le \frac{1}{2} \|\sqrt\n  \dot u\|^2_{L^2} + C_2\|\na \te\|_{L^2}^2+C\int  |\na u|^2\te dx\\
&\quad+C( \|\div u\|_{L^\infty}+\|u\|_{L^\infty}^2)(\|\sqrt\n\te\|_{L^2}^2+ \|\na u\|_{L^2}^2).
 \ea\ee
Submitting \eqref{a16}  into \eqref{hh17}, one has
\be\ba \la{hh7}
&B'(t)+ \|\sqrt\n \dot u\|_{L^2}^2\\
&\le  C_2\|\na \te\|_{L^2}^2+C( \|\div u\|_{L^\infty}+\|u\|_{L^\infty}^2)(\|\sqrt\n\te\|_{L^2}^2+ \|\na u\|_{L^2}^2)+C\int  |\na u|^2\te dx,
\ea\ee
where $$B(t)\triangleq \int\left({\mu}|\na u|^2+ (\mu+\lambda)(\div u)^2 +\frac{1}{2\mu+\lambda}P^2-2P \div u\right) dx$$
satisfies
\be\la{h1} \mu\|\na u\|_{L^2}^2/2-C_3\|\sqrt\n\te\|_{L^2}^2\le B(t)\le C(\|\sqrt\n\te\|_{L^2}^2+ \|\na u\|_{L^2}^2),\ee
due to \eqref{t4}. 

To estimate the last term in \eqref{c1} and \eqref{hh7}, multiplying \eqref{q1}$_2$ by $u\te$ and using integration by parts and \eqref{t4}, it holds  that for any $\epsilon\in(0,1),$
\be\la{c4}\ba
&\int ({\mu}|\na u|^2+ (\mu+\lambda)(\div u)^2)\te dx\\
&\le C\int\n\te|u|(|\dot u|+|\na\te|)dx+C\int|u||\na u||\na\te|dx+C\int\n\te^2|\div u|dx\\
&\le \epsilon(\|\na\te\|_{L^2}^2+\|\sqrt\n \dot u\|_{L^2}^2)+C(\epsilon)( \|\div u\|_{L^\infty}+\|u\|_{L^\infty}^2)(\|\sqrt\n\te\|_{L^2}^2+ \|\na u\|_{L^2}^2).
 \ea\ee

Therefore, adding  \eqref{c1} multiplied  by $C_4\triangleq \max\{c_\upsilon^{-1}(C_3+1),(2\ka)^{-1}(C_2+1)\}$ to \eqref{hh7} and using \eqref{c4}, \eqref{h1}, 
  one derives after choosing $\epsilon$ small enough that
\be\la{c3}B'_1(t)+\frac{1}{2}B_2\le C( \|\div u\|_{L^\infty}+\|u\|_{L^\infty}^2)B_1+C,\ee
with
\be\la{xy}
B_1(t)\triangleq e+C_4 c_\upsilon\|\sqrt\n\te\|_{L^2}^2+B(t)\geq e+\mu\|\na u\|_{L^2}^2/2+\|\sqrt\n\te\|_{L^2}^2,\ee
\be\notag B_2(t)\triangleq e+\|\na\te\|_{L^2}^2+\|\sqrt\n \dot u\|_{L^2}^2.\ee


It remains to estimate $\|u\|_{L^\infty}^2$.
In fact, $\eqref{q1}_1$ implies that for any $t\in(0,T_\ast)$,
$\|\n\|_{L^1}(t)=\|\n_0\|_{L^1},$
which along with \eqref{a2.12} and  the method in \cite{liliang} leads to
\be\la{t11}\inf_{0\le t\le T}\int_{B_{N_0}}\n dx\geq \frac{1}{2}\|\n_0\|_{L^1},\ee for some suitably large $N_0\geq 1$.
  Thus, by virtue of Lemma \ref{w5},  \eqref{t11},  and \eqref{t4}, for  any $\ve> 0$ and $\eta\in(0,1],$  every $v\in   \ti D^{1,2}(\r^2) $  satisfies
  \be \la{cc1}\ba  \|v\bar x^{-1}\|_{L^2}+\|v\bar x^{-\eta}\|_{L^{(2+\ve)/\eta} } &\le C(\epsilon,\eta)(\|\sqrt{\n}v\|_{L^2}+\|\na v\|_{L^2}).\ea\ee
We thus deduce from \eqref{b9}, \eqref{t4}, \eqref{a2.12}, and \eqref{cc1} that
\be\ba\notag
\frac{d}{dt}\|\n\bar x^{a}\|_{L^1}
&\le C\|\n\bar x^{a-{2/3}+{1/(1+a)}}\|_{L^{3(1+a)/(2+3a)}}\|u\bar x^{-{1/(1+a)}}\|_{L^{3(1+a)}}\\
&\le C\|\n\bar x^{a}\|_{L^1}^{(2+3a)/(3(1+a))}(1+\|\na u\|_{L^2}),\ea\ee
which as well as Gronwall's inequality and \eqref{a2.12} gives
\be\la{ch}\sup_{0\le t\le T}\|\n\bar x^{a}\|_{L^1}\le C.\ee

Note that by adopting the  method in Lemma \ref{lee0}, \eqref{key3} and \eqref{k1} both still hold in this section. In fact, for $\al\triangleq\min\{a/2,1\}$, taking  $\tilde \al=\al/2$ in \eqref{k1} and using \eqref{t4}, \eqref{cc1}, and \eqref{ch}, we obtain  that for $\beta=\al/3$,  
\be\ba\la{yy}
&\|\na u|x|^{\beta}\|_{L^2}\\
&\le \|\na u\|_{L^2(B_1)}+C\||x|^{\beta-\al/2}\mathcal{I}_1(|y|^{\al/2}|\n\dot u|)\|_{L^2(\r^2\setminus B_1)}\\
&\quad+C\||x|^{\beta-1-\al/2}\|_{L^2(\r^2\setminus B_1)}\||y|^{\alpha/2}\n\dot u\|_{L^1}+\|\mathcal{T}_1 P|x|^{\beta}\|_{L^2}+\|\mathcal{T}_2 P|x|^{\beta}\|_{L^2}\\
&\le \|\na u\|_{L^2}+C\||x|^{-\al/6}\|_{L^{14/\al}(\r^2\setminus B_1)}\|\mathcal{I}_1(|y|^{\al/2}|\n\dot u|)\|_{L^{14/(7-\al)}}\\
&\quad+C\||y|^{\al/2}\n\dot u\|_{L^1}+C\| P|x|^{\beta}\|_{L^2}\\
&\le \|\na u\|_{L^2}+C(\||y|^{\al/2}\n\dot u\|_{L^{14/(14-\al)}}+\||y|^{\al/2}\n\dot u\|_{L^1})+C\|P|x|^{\al/3}\|_{L^2}\\
&\le \|\na u\|_{L^2}+C\|\sqrt\n\dot u\|_{L^2}(\|\bar y^{7\al/(7-\al)}\n\|_{L^1}^{(7-\al)/14}+\|\bar y^{\al}\n\|_{L^1}^{1/2})\\
&\quad+C\|\te\bar x^{-\al/3}\|_{L^{8/\al}}\|\n\bar x^{16\al/(3(4-\al))}\|_{L^1}^{(4-\al)/8}\\
&\le C(\|\na u\|_{L^2}+\|\sqrt\n \dot u\|_{L^2}+\|\sqrt\n\te\|_{L^2}+\|\na\te\|_{L^2}),
\ea\ee
which combined with Lemma \ref{w4} gives that for $q_3=6/\al\geq 6$,
\be\la{lo}\|u\|_{L^{q_3}}\le C\|\na u|x|^{\beta}\|_{L^2}\le C(\|\na u\|_{L^2}+\|\sqrt\n\dot u\|_{L^2}+\|\sqrt\n\te\|_{L^2}+\|\na\te\|_{L^2}).\ee

Then, in view of \eqref{h18}, \eqref{h20}, \eqref{lo}, and  \eqref{t4},  we get
\be\la{gu}\ba
\|u\|_{W^{1,q_3}}
&\le \|u\|_{L^{q_3}}+C \|\rho\dot{u}\|_{L^2}^{(q_3-2)/q_3}\left(\|{\nabla u}\|_{L^2}
   + \|\n\te\|_{L^2}\right)^{{2/q_3}}+C\|\n\te\|_{L^{q_3}}\\
&\le C(\|\na u\|_{L^2}+\|\sqrt\n \dot u\|_{L^2}+\|\sqrt\n\te\|_{L^2}+\|\na\te\|_{L^2}),
\ea\ee
where we have used  the following fact: for any $p\in[4,\infty)$,
\be\la{cc}
\|\n\te\|_{L^p}\le C\|\n\bar x^{a}\|_{L^1}^{4/(p(4+a))}\|\te\bar x^{-{4a/(p(4+a))}}\|_{L^{p(4+a)/a}}
\le C(p)(\|\sqrt\n\te\|_{L^2}\!+\!\|\na\te\|_{L^2}),\ee
due to \eqref{t4}, \eqref{ch}, and \eqref{cc1}.

Now, it follows from \eqref{lo1}, \eqref{c3}, \eqref{xy}, and \eqref{gu} that
\be\la{c9}\ba
&B'_1(t)+\frac{1}{2}B_2\\
&\le C( \|\div u\|_{L^\infty}+\|\na u\|_{L^2}^2\ln(e+\|u\|_{W^{1,q_2}})+1)B_1\\
&\le C( \|\div u\|_{L^\infty}+\|\na u\|_{L^2}^2\ln(B_1^{1/2}+B_2^{1/2})+1)B_1\\
&\le C( \|\div u\|_{L^\infty}+\|\na u\|_{L^2}^2+1)B_1\ln B_1+C\|\na u\|_{L^2}^2\ln (B_2^{1/2})B_1\\
&\le C( \|\div u\|_{L^\infty}+\|\na u\|_{L^2}^2+1)B_1\ln B_1+C\|\na u\|_{L^2}^2(B_1\ln B_1+B_2^{1/2})\\
&\le C( \|\div u\|_{L^\infty}+\|\na u\|_{L^2}^2+1)B_1\ln B_1+\frac{1}{4}B_2,
\ea\ee
where we have used the key  inequality: 
$$ xy\le C(x\ln(e+x)+e^y), \quad\text{for any}\,\, x,y>0.$$ 
Then applying Gronwall's inequality to \eqref{c9} along with \eqref{t3}, \eqref{a2.12}, and \eqref{xy} infers \eqref{a2.1}.
The proof of Lemma \ref{le7} is completed.
\end{proof}

\begin{lemma}\la{le77}
Under the condition (\ref{t3}), it holds that for any $0\le T< T_\ast$,
\be\la{c14}\sup_{0\le t\le T}\|\bar x^a\n\|_{L^1\cap L^\infty}\le C.\ee
\end{lemma}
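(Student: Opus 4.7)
\textbf{Proof plan for Lemma \ref{le77}.} The $L^1$ bound on $\bar x^a\n$ has already been obtained as (\ref{ch}) during the proof of Lemma \ref{le7}, so the task reduces to establishing the $L^\infty$ bound. The natural starting point is the transport-type equation (\ref{b9}) for $\omega\triangleq\bar x^a\n$, which was used to derive the $L^p$ estimate (\ref{b11}),
\begin{equation*}
\frac{d}{dt}\|\omega\|_{L^p}\le C\bigl(\|\div u\|_{L^\infty}+\|u\cdot\na\log\bar x\|_{L^\infty}\bigr)\|\omega\|_{L^p},
\end{equation*}
with a constant $C$ that is crucially independent of $p\in[1,\infty)$. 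The plan is to apply Gronwall, observe that the initial datum $\omega_0=\bar x^a\n_0$ lies in $L^\infty$ (it belongs to $W^{1,q}$ with $q>2$, whence to $L^\infty$ by Sobolev embedding, with a $p$-uniform bound via interpolation between $L^1$ and $L^\infty$), and then let $p\to\infty$ to obtain $\sup_{0\le t\le T}\|\omega\|_{L^\infty}\le C$.

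The whole argument therefore hinges on showing
\begin{equation*}
\int_0^T\bigl(\|\div u\|_{L^\infty}+\|u\cdot\na\log\bar x\|_{L^\infty}\bigr)\,dt\le C.
\end{equation*}
The first term is integrable by the very hypothesis (\ref{t3}). For the second, a direct computation of $\na\log\bar x=\na\bar x/\bar x$ (exploiting that $\bar x$ grows like $|x|\log^{1+\eta_0}|x|$ at infinity while $|\na\bar x|$ grows only like $\log^{1+\eta_0}|x|$) gives the pointwise bound $|\na\log\bar x|\le C(e+|x|^2)^{-1/2}\le C$ on $\r^2$, so it suffices to prove $\int_0^T\|u\|_{L^\infty}\,dt\le C$.

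For this last bound I would simply recycle the weighted $W^{1,q_3}$ estimate with $q_3=6/\al>2$ already produced in the course of proving Lemma \ref{le7} (see (\ref{yy})--(\ref{gu})):
\begin{equation*}
\|u\|_{W^{1,q_3}}\le C\bigl(\|\na u\|_{L^2}+\|\sqrt\n\dot u\|_{L^2}+\|\sqrt\n\te\|_{L^2}+\|\na\te\|_{L^2}\bigr).
\end{equation*}
Squaring in time and using the conclusions of Lemma \ref{le7} gives $\int_0^T\|u\|_{W^{1,q_3}}^2\,dt\le C$. The Sobolev embedding $W^{1,q_3}(\r^2)\hookrightarrow L^\infty(\r^2)$ (valid since $q_3>2$) then yields $\int_0^T\|u\|_{L^\infty}^2\,dt\le C$, so by Cauchy--Schwarz $\int_0^T\|u\|_{L^\infty}\,dt\le C\sqrt{T}\le C$, as required. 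The main obstacle is really absorbed into Lemma \ref{le7}: it is the two-dimensional cancellation identity (\ref{zer}) that allows the ``bad'' singular-integral piece of $\na u$ to be controlled by a weighted norm of $\n\dot u$ and, in turn, delivers the key $W^{1,q_3}$ bound with $q_3>2$; granted this, the $L^\infty$ bound on $\bar x^a\n$ is obtained by the $p$-uniform Gronwall procedure sketched above, and combined with (\ref{ch}) yields (\ref{c14}).
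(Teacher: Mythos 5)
Your proposal is correct and follows essentially the same route as the paper: the transport inequality (\ref{b11}) for $\omega=\bar x^a\n$, a $p$-uniform Gronwall argument with the limit $p\to\infty$, and the reduction to the time-integrability of $\|\div u\|_{L^\infty}+\|u\cdot\na\log\bar x\|_{L^\infty}$. The only (harmless) difference is in the auxiliary step: the paper controls the second term through the weighted bound $\|u\bar x^{-\eta}\|_{L^\infty}\le C(\eta)(\|\sqrt\n\dot u\|_{L^2}+\|\na\te\|_{L^2}+1)$ derived from an $L^4$-estimate of $\na u$, whereas you invoke the unweighted bound $\|u\|_{L^\infty}\le C\|u\|_{W^{1,q_3}}$ via (\ref{gu}); both quantities are square-integrable in time by Lemma \ref{le7}, so either choice closes the argument.
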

\begin{proof}
It follows from \eqref{h18}, \eqref{h20}, \eqref{cc}, \eqref{t4}, and \eqref{a2.1} that for any $p\in[4,\infty)$,
\be\la{gff}\ba
\|\na u\|_{L^{p}}\le C(p)( \|\sqrt\n\dot u\|_{L^2}+\|\na\te\|_{L^2}+1 ),
\ea\ee
which together with \eqref{cc1}, \eqref{a2.12}, and \eqref{a2.1} infers that for $0<\eta\le1$,
\be\ba\la{bb111}
\|u\bar x^{-\eta}\|_{L^\infty}&\le C(\eta)(\|u\bar x^{-\eta}\|_{L^{4/\eta}}+\|\na(u\bar x^{-\eta})\|_{L^4})\\
&\le C(\eta)(\|u\bar x^{-\eta}\|_{L^{4/\eta}}+\|\na u\|_{L^4}+\|u\bar x^{-\eta}\|_{L^{8/\eta}}\|\bar x^{-1}\na\bar x\|_{L^{8/(2-\eta)}})\\
&\le C(\eta)(\|\sqrt\n\dot u\|_{L^2}+\|\na\te\|_{L^2}+1).
\ea\ee
Then, in view of \eqref{b11} with $\omega=\bar x^a\n$, using  Gronwall's inequality, \eqref{t3}, \eqref{a2.1}, and \eqref{bb111}, it holds that for any $l\in(1,\infty)$,
\be\la{fd}\sup_{0\le t\le T}\|\bar x^a\n\|_{L^l}\le C.\ee
Note that here the constant $C$ is independent of $l$, therefore \eqref{c14} will be derived after using \eqref{fd} and \eqref{ch}.
The proof of Lemma \ref{le77} is completed.
\end{proof}

\begin{lemma}\la{a113.4} Under the condition (\ref{t3}), it holds that for $0\le T< T_\ast$,
\be\la{ae0} \ba
 \sup_{0\le t\le T}\left(\|\na\te\|_{L^2}^2+\|\sqrt\rho\dot{u}\|_{L^2}^2\right)dx
 + \int_0^T\left(\|\sqrt\n\dot \te\|_{L^2}^2+\|\nabla\dot{u}\|_{L^2}^2\right)dt \le C.\ea\ee
\end{lemma}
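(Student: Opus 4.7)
The goal is to derive coupled differential inequalities for $\|\sqrt\n\dot u\|_{L^2}^2$ and $\|\na\te\|_{L^2}^2$, closed by Gronwall with the time-integrable forcing provided by \eqref{t3} and Lemma \ref{le7}. The initial value $\|\sqrt\n\dot u\|_{L^2}^2(0)$ is finite because the compatibility condition \eqref{co1} defines $\sqrt{\n_0}\dot u|_{t=0}$ as an $L^2$ function via $g$.

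\emph{Step 1 (material-derivative estimate for $u$).} I apply the operator $\pa_t + \div(\,\cdot\,u)$ to $(\ref{q1})_2$ to get a transport-type equation for $\n\dot u$, multiply by $\dot u$, and integrate by parts over $\r^2$. Standard Hoff-type manipulations give
\begin{equation*}
\tfrac{1}{2}\tfrac{d}{dt}\|\sqrt\n\dot u\|_{L^2}^2 + \int\!\bigl(\mu|\na\dot u|^2 + (\mu+\lambda)(\div\dot u)^2\bigr)dx = \sum_{i} R_i,
\end{equation*}
where the $R_i$ are commutator terms involving $\na u$ and a pressure contribution $\int P_t \div\dot u\,dx$. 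I rewrite $P_t$ via $(\ref{q1})_3$ as $c_\upsilon P_t = -c_\upsilon\div(Pu) - RP\div u + R(F(\na u)+\ka\Delta\te)$ and integrate by parts to avoid derivatives on $\dot u$. Using \eqref{t4} and the bounds of Lemma \ref{le7}, this leads to
\begin{equation*}
\tfrac{d}{dt}\|\sqrt\n\dot u\|_{L^2}^2 + \|\na\dot u\|_{L^2}^2 \le C\bigl(1 + \|\na u\|_{L^4}^4 + \|\na\te\|_{L^2}^2\bigr) + C(\|\div u\|_{L^\infty}+1)\|\sqrt\n\dot u\|_{L^2}^2.
\end{equation*}

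\emph{Step 2 (temperature estimate).} I multiply $(\ref{q1})_3$ by $\dot\te = \te_t + u\cdot\na\te$ and integrate by parts to produce
\begin{equation*}
c_\upsilon\|\sqrt\n\dot\te\|_{L^2}^2 + \tfrac{\ka}{2}\tfrac{d}{dt}\|\na\te\|_{L^2}^2 = \int\!\bigl(F(\na u) - P\div u\bigr)\dot\te\,dx - \ka\int\!\na\te\cdot\na(u\cdot\na\te)\,dx.
\end{equation*}
The convective remainder is dominated by $\|\na u\|_{L^\infty}\|\na\te\|_{L^2}^2 + \|u\|_{L^\infty}\|\na\te\|_{L^2}\|\na^2\te\|_{L^2}$, with $\|\na u\|_{L^\infty}$ controlled logarithmically by Lemma \ref{w8}, $\|u\|_{L^\infty}$ by the weighted Poincaré $\eqref{cc1}$ and \eqref{gff}, and $\|\na^2\te\|_{L^2}$ expressed through $\|\sqrt\n\dot\te\|_{L^2}$ by the standard elliptic estimate for $(\ref{q1})_3$. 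Together with the $L^4$-specialization of \eqref{gff},
\begin{equation*}
\|\na u\|_{L^4} \le C\bigl(\|\sqrt\n\dot u\|_{L^2} + \|\na\te\|_{L^2} + 1\bigr),
\end{equation*}
the right-hand side is reduced to $\epsilon(\|\na\dot u\|_{L^2}^2 + \|\sqrt\n\dot\te\|_{L^2}^2)$ plus a factor that is $L^1$ in time multiplied by $\|\sqrt\n\dot u\|_{L^2}^2 + \|\na\te\|_{L^2}^2 + 1$.

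\emph{Step 3 (Gronwall).} Setting $\Phi(t) \triangleq \|\sqrt\n\dot u\|_{L^2}^2 + \|\na\te\|_{L^2}^2$ and adding a suitable multiple of Step 2 to Step 1, after absorbing the dissipative terms on the left, I obtain
\begin{equation*}
\Phi'(t) + \|\na\dot u\|_{L^2}^2 + \|\sqrt\n\dot\te\|_{L^2}^2 \le C h(t)\bigl(\Phi(t) + 1\bigr),
\end{equation*}
with $h(t) = 1 + \|\div u\|_{L^\infty} + \|\na u\|_{L^2}^2 + \|\sqrt\n\dot u\|_{L^2}^2$, which is in $L^1(0,T)$ by \eqref{t3}, Lemma \ref{le55}, and Lemma \ref{le7}. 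Gronwall's inequality then yields \eqref{ae0}.

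\emph{Main obstacle.} The crux is the dissipation coupling $\int F(\na u)\dot\te\,dx$. A naive Hölder split produces $\|\na u\|_{L^4}^2 \|\dot\te\|_{L^2}$, but only the density-weighted norm $\|\sqrt\n\dot\te\|_{L^2}$ is naturally available (the weight being essential because vacuum is allowed). The remedy is to split $\dot\te = \te_t + u\cdot\na\te$, control $\|\te_t\bar x^{-\eta}\|_{L^{(2+\epsilon)/\eta}}$ via \eqref{cc1} in terms of $\|\sqrt\n\te_t\|_{L^2}+\|\na\te_t\|_{L^2}$, and absorb the $\|\na\dot\te\|_{L^2}$-type leftover into the left-hand dissipation. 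A parallel subtlety in Step 1 is that feeding $\|\na u\|_{L^4}^4$ back through \eqref{gff} generates $\|\sqrt\n\dot u\|_{L^2}^4$, which must be Young-split against $\|\na\dot u\|_{L^2}^2$ so that the Gronwall feedback is only linear in $\Phi$ rather than super-linear.
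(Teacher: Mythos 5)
Your overall architecture (Hoff-type estimate for $\dot u$, testing $(\ref{q1})_3$ with $\dot\te$, combined Gronwall using the $L^1_t$ coefficients from \eqref{t3} and Lemma \ref{le7}) matches the paper's Steps 1--3. But your treatment of the decisive term $\int F(\na u)\dot\te\,dx$ does not close, and this is precisely where the difficulty of the lemma lies.

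You propose to split $\dot\te=\te_t+u\cdot\na\te$, bound $\|\te_t\bar x^{-\eta}\|_{L^{(2+\epsilon)/\eta}}$ by $\|\sqrt\n\te_t\|_{L^2}+\|\na\te_t\|_{L^2}$ via \eqref{cc1}, and ``absorb the $\|\na\dot\te\|_{L^2}$-type leftover into the left-hand dissipation.'' There is no such dissipation available at this level: testing $(\ref{q1})_3$ with $\dot\te$ produces only $\|\sqrt\n\dot\te\|_{L^2}^2$ on the left, not $\|\na\dot\te\|_{L^2}^2$. The gradient dissipation of $\dot\te$ appears only in the next estimate (Lemma \ref{le8}), which is time-weighted and which itself uses the conclusion of the present lemma, so invoking $\|\na\te_t\|_{L^2}$ here is circular. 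The paper's resolution is structurally different: in \eqref{ee3}--\eqref{ee5} the time derivative is integrated by parts onto $F(\na u)$, so that $\int F(\na u)\dot\te\,dx$ becomes $\bigl(\int F(\na u)\te\,dx\bigr)_t$ plus terms like $\int\te\,\div u\,\div\dot u\,dx$, which are absorbed into the \emph{velocity} dissipation $\eta\|\na\dot u\|_{L^2}^2$ at the cost of $C(\eta)\|\te\na u\|_{L^2}^2$. The boundary-in-time term $\int F(\na u)\te\,dx$ is then folded into the energy functional $B_3$ and must be shown to be a small perturbation of $\|\na\te\|_{L^2}^2+\|\sqrt\n\dot u\|_{L^2}^2$; since $\te$ itself is not in $L^2$ on $\r^2$, this requires the weighted gradient bound $\|\na u\bar x^{\al/4}\|_{L^2}\le C(\|\sqrt\n\dot u\|_{L^2}+\|\na\te\|_{L^2}+1)$ of \eqref{ai}, which comes from the cancellation/singular-integral estimate \eqref{yy}. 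The same weighted bound is needed for $\|\te\na u\|_{L^2}^2$ in \eqref{ohh}, a quantity your plan never addresses. Your proposal omits both the integration by parts in time and the weighted estimate on $\na u$, so the argument does not close as written. (A minor additional point: the quartic term $\|\sqrt\n\dot u\|_{L^2}^4$ cannot be Young-split against $\|\na\dot u\|_{L^2}^2$; it is handled simply because one factor $\|\sqrt\n\dot u\|_{L^2}^2$ is integrable in time by Lemma \ref{le7}, so the super-linear Gronwall still closes.)
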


\begin{proof}
First, operating $ \dot u^j[\pa/\pa t+\div (u\cdot)]$ to $ (\ref{q1})_2^j$
and integrating the resulting equality over $\r^2 ,$ we obtain after integrating by parts that
\be\la{m4} \ba
 \frac{1}{2}\frac{d}{dt}\int\rho|\dot{u}|^2dx = &-\int\dot{u}^j[\p_jP_t +\div (\p_jPu)]dx +\mu\int\dot{u}^j[\Delta u_t^j +\div (u\Delta u^j)] dx\\
&+ (\mu+\lambda)\int\dot{u}^j[\p_j \div u_t  +\div (u\p_j \div u )] dx   \triangleq \sum_{i=1}^3\tilde J_i. \ea \ee
Note that $ P_t=R\n\dot\te -\div(Pu)$, which along with integration by parts and \eqref{t4} yields 
\be\la{m5} \ba
\tilde J_1 
    & =- \int \dot{u}^j\left(R\p_j (\n \dot\te)-\pa_j\div(Pu)  + \div (\p_jPu)\right) dx \\
    & =- \int \dot{u}^j\left( R\p_j(\n \dot\te)-  \div (P\p_ju)\right) dx\\
    &\le \frac{ 1}{8}\mu \int |\nabla\dot{u}|^2dx+C  \left( \|\sqrt\n \dot\te\|_{L^2}^2+\|\te\na u\|_{L^2}^2 \right) .\ea \ee
Integration by parts leads to
\be\la{m6} \ba
\tilde J_2 
    & = - \mu\int \left(\p_i\dot{u}^j\p_iu_t^j +\Delta u^ju\cdot\nabla\dot{u}^j\right)dx \\
    & = -  \mu\int\left(|\nabla\dot{u}|^2 -\p_i\dot{u}^ju^k\p_k\p_iu^j - \p_i\dot{u}^j\p_iu^k\p_ku^j +
       \Delta u^ju\cdot\nabla\dot{u}^j\right)dx \\
    & = - \mu\int \left(|\nabla\dot{u}|^2 + \p_i\dot{u}^j\p_iu^j\div u - \p_i\dot{u}^j\p_iu^k\p_ku^j - \p_iu^j\p_iu^k\p_k\dot{u}^j
     \right)dx \\
&\le -\frac{7}{8}\mu \int |\nabla\dot{u}|^2dx  + C \|\nabla u\|_{L^4}^4. \ea \ee
Analogously, we have
\be\la{m7}\ba
\tilde J_3&  \le -\frac{7}{8}(\mu+\lambda) \int ({\rm div} \dot u)^2dx + C\|\nabla u\|_{L^4}^4.\ea
\ee
Substituting (\ref{m5})--(\ref{m7}) into (\ref{m4}) and  using \eqref{gff} give that
\be\la{e0}\ba
& \frac{d}{dt}\|\sqrt\rho\dot{u}\|_{L^2}^2
  +\frac{3}{2}\mu \|\nabla\dot{u}\|_{L^2}^2 +\frac{7}{4}(\mu+\lambda)\| {\rm div} \dot u\|_{L^2}^2\\
& \le C_5\|\sqrt\n\dot\te\|_{L^2}^2+C(\|\sqrt\rho\dot{u}\|_{L^2}^4  +\|\na\te\|_{L^2}^4+\|\te\na u\|_{L^2}^2+1).\ea\ee

Next, multiplying $(\ref{q1})_3 $ by $\dot\te$ and integrating the resulting equality over $\r^2 $ infer
\be\la{ee1} \ba
&\frac{\ka}{2}\frac{d}{dt}\|\na\te\|_{L^2}^2+c_\upsilon\int\rho|\dot{\te}|^2dx\\
&=-\ka\int\na\te\cdot\na(u\cdot\na\te)dx+\lambda\int  (\div u)^2\dot\te dx\\
&\quad+2\mu\int |\mathfrak{D}(u)|^2\dot\te dx-R\int\n\te \div u\dot\te dx \triangleq \sum_{i=1}^4\tilde K_i . \ea\ee
Applying the standard $L^2$-estimate to \eqref{q1}$_3$ and using  \eqref{t4} and \eqref{gff}, it holds
\be  \la{op4}\ba
\|\na^2\te\|_{L^2}
&\le C (\|\n\dot \te\|_{L^2}+ \|\na u\|_{L^4}^2+\|\te\na u\|_{L^2}) \\
&\le C(\|\sqrt\n\dot \te\|_{L^2}+\|\sqrt\rho\dot{u}\|_{L^2}^2+\|\na\te\|_{L^2}^2 +\|\te\na u\|_{L^2}+1),\ea\ee
which together with  (\ref{a2.1}) shows that for any $\epsilon\in(0,1),$
\be\la{ee2} \ba
|\tilde K_1|&\le C \int|\na u||\na\te|^2dx\\
     &\le C   \|\na u\|_{L^2}\|\na\te\|_{L^2}\|\na^2\te\|_{L^2}  \\
     &\le \epsilon \|\sqrt\n\dot\te\|^2_{L^2}+C(\epsilon)( \|\na \te\|_{L^2}^4+\|\sqrt\rho\dot{u}\|_{L^2}^4 +\|\te\na u\|_{L^2}^2+1).\ea\ee
Next, integration by parts yields that for any $\eta\in (0,1),$
\be\la{ee3}\ba
\tilde K_2 
=&\lambda\left(\int (\div u)^2 \te dx\right)_t-2\lambda \int \te \div u \div (\dot u-u\cdot\na u)dx\\
&+\lambda\int (\div u)^2u\cdot\na\te dx \\=
&\lambda\left(\int (\div u)^2 \te dx\right)_t-2\lambda\int \te \div u \div \dot udx\\
&+2\lambda\int \te \div u \pa_i u^j\pa_j  u^i dx+ \lambda\int u \cdot\na\left(\te   (\div u)^2 \right)dx\\
\le &\lambda\left(\int (\div u)^2 \te dx\right)_t+\eta\|\na \dot u\|_{L^2}^2+C(\eta)\|\te\na u\|_{L^2}^2+C\|\na u\|_{L^4}^4 .\ea\ee
 Then, similar to (\ref{ee3}),
\be \la{ee5}\ba
\tilde K_3&\le 2\mu\left(\int|\mathfrak{D}(u)|^2 \te dx\right)_t+\eta\|\na \dot u\|_{L^2}^2+C(\eta)\|\te\na u\|_{L^2}^2+C\|\na u\|_{L^4}^4.    \ea\ee
Finally, Cauchy's inequality and \eqref{t4} give
 \be\la{ee39}\ba
| \tilde K_4|    \le \epsilon \|\sqrt\n\dot\te\|^2_{L^2}+C(\epsilon)\|\te\na u\|_{L^2}^2 .  \ea\ee

Substituting  (\ref{ee2})--(\ref{ee39}) into (\ref{ee1}), one has after using  (\ref{gff})  and choosing $\epsilon$ suitably small that, for any $\eta\in (0,1),$
\be\la{e7}\ba
B_3 '(t)+\|\sqrt\n\dot \te\|_{L^2}^2\le  C \eta \|\na\dot u\|_{L^2}^2+C(\eta)\|\te\na u\|_{L^2}^2+C( \|\na \te\|_{L^2}^4+\|\sqrt\rho\dot{u}\|_{L^2}^4+1) ,\ea\ee
where
\be\notag
B_3(t)\triangleq c_\upsilon^{-1}\left(\ka \|\na\te\|_{L^2}^2-2\int F(\na u)\te dx \right).
\ee

Note that  by \eqref{yy} and \eqref{a2.1},  it holds that 
\be\la{ai}\|\na u\bar x^{\al/4}\|_{L^2}\le C(\|\na u\|_{L^2}+\|\na u |x|^{\al/3}\|_{L^2})\le C(\|\sqrt\n\dot u\|_{L^2}+\|\na\te\|_{L^2}+1).\ee
Then by virtue of  \eqref{a2.1}, \eqref{cc1}, \eqref{gff},  and \eqref{ai}, one arrives at that for any $\epsilon_1\in(0,1),$
\be\notag\ba
\int F(\na u)\te dx
&\le C\|\na u\bar x^{\al/4}\|_{L^2}^{1/2}\|\na u\|_{L^2}^{(9-\al)/6}\|\na u\|_{L^4}^{\al/6}\|\te\bar x^{-{\al/8}}\|_{L^{24/\al}}\\
&\le \epsilon_1(\|\na u\bar x^{\al/4}\|_{L^2}^2+\|\na\te\|_{L^2}^2+\|\na u\|_{L^4}^2)+C(\epsilon_1)\\
&\le C\epsilon_1\left(\|\na\te\|_{L^2}^2+\|\sqrt\n\dot u\|_{L^2}^2\right)+C(\epsilon_1),
\ea\ee
which implies that for $C_5$ in  (\ref{e0}), if $\epsilon_1$ is small enough, one has
\be\la{op}\ba
B_4(t)\triangleq \|\sqrt\n\dot u\|_{L^2}^2+(C_5+1) B_3(t)\geq C\left(\|\na\te\|_{L^2}^2+\|\sqrt\n\dot u\|_{L^2}^2\right)-C.\ea\ee

Adding  (\ref{e7}) multiplied by $C_5+1$ to (\ref{e0}),
  we conclude after choosing $ \eta$ suitably small that
\be\la{e8}\ba
B_4'(t) +\mu\|\nabla\dot{u}\|_{L^2}^2+\|\sqrt\n\dot \te\|_{L^2}^2\le C( \|\na \te\|_{L^2}^4 +\|\sqrt\rho\dot{u}\|_{L^2}^4+\|\te\na u\|_{L^2}^2+1).
\ea\ee
Now applying Gronwall's inequality to  \eqref{e8} yields \eqref{ae0} directly, where we have used \eqref{a2.1}, \eqref{op}, and the following  fact:
\be\la{ohh}\ba
\|\te\na u\|_{L^2}^2
&\le C\|\na u\bar x^{\al/2}\|_{L^2}\|\te\bar x^{-{\al/4}}\|_{L^{16/\al}}^2\|\na u\|_{L^2}^{1-{\al/2}}\|\na u\|_{L^4}^{{\al/2}}\\
&\le C(\|\na \te\|_{L^2}^4 +\|\sqrt\rho\dot{u}\|_{L^2}^4+1),
\ea\ee
due to \eqref{cc1}, \eqref{gff}, \eqref{ai}, and \eqref{a2.1}.
 The proof of Lemma \ref{a113.4} is completed.
\end{proof}

\begin{lemma}\la{le8} Under the condition (\ref{t3}), it holds that for $0\le T< T_\ast$,
\be\la{522}\sup_{0\le t\le T}t(\|\sqrt\n\dot\te\|_{L^2}^2+\|\na^2\te\|_{L^2}^2)+\int_0^{T}t\|\na\dot\te\|_{L^2}^2dt\le C.\ee
   \end{lemma}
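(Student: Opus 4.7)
The plan is to derive the desired weighted bound on $\dot\te$ by applying the material-derivative operator $\partial_t+\div(\cdot\,u)$ to the temperature equation, written as $c_\upsilon\n\dot\te=\ka\Delta\te+F(\na u)-P\div u$, multiplying the resulting identity by $t\dot\te$, and integrating over $\r^2$. Using the commutator identity
\begin{equation*}
\Delta\te_t+\div(u\,\Delta\te)=\Delta\dot\te-2\na u\!:\!\na^2\te-\Delta u\cdot\na\te+\Delta\te\,\div u,
\end{equation*}
the principal part becomes $c_\upsilon\n\ddot\te-\ka\Delta\dot\te$, and after integration by parts on the Laplace term,
\begin{equation*}
\frac{c_\upsilon}{2}\frac{d}{dt}\bigl(t\|\sqrt\n\dot\te\|_{L^2}^2\bigr)+\ka t\|\na\dot\te\|_{L^2}^2=\frac{c_\upsilon}{2}\|\sqrt\n\dot\te\|_{L^2}^2+t\sum_i\mathcal{R}_i,
\end{equation*}
where the $\mathcal{R}_i$ collect the commutator pieces from the Laplacian, the material derivative of $F(\na u)$, and that of $P\div u$, each paired with $\dot\te$.

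The heart of the argument is to dominate each $\mathcal{R}_i$ by a small multiple of $\ka t\|\na\dot\te\|_{L^2}^2$ plus a function of time that is integrable on $(0,T)$. To this end, every $\dot\te$-factor is estimated by the weighted Poincar\'e inequality \eqref{cc1}, giving $\|\dot\te\,\bar x^{-\eta}\|_{L^p}\le C(\eta,p)(\|\sqrt\n\dot\te\|_{L^2}+\|\na\dot\te\|_{L^2})$, while the velocity derivatives are handled using \eqref{gff}, \eqref{ai}, \eqref{bb111}, and the $L^\infty$ bound on $\bar x^a\n$ from \eqref{c14}. In particular, terms such as $\int\na u\!:\!\na^2\te\,\dot\te\,dx$ and $\int\Delta u\cdot\na\te\,\dot\te\,dx$ are bounded by $\|\na u\|_{L^4}\|\na^2\te\|_{L^2}\|\dot\te\|_{L^4}$ (with $\|\na^2\te\|_{L^2}$ re-expressed via \eqref{op4}); those arising from $(F(\na u))_t$ split into a piece $\na\dot u\cdot\na u\cdot\dot\te$ (controlled by $\|\na\dot u\|_{L^2}\|\na u\|_{L^4}\|\dot\te\|_{L^4}$) and a cubic piece $|\na u|^3\dot\te$ bounded via \eqref{gff} and \eqref{ohh}; finally the $P_t$-contribution is processed by substituting $P_t=R\n\dot\te-R\div(Pu)$ from $(\ref{q1})_3$, producing $\n|\dot\te|^2$-type and routine convective pieces.

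After absorbing the small multiples of $\ka t\|\na\dot\te\|_{L^2}^2$ on the left-hand side, an application of Gronwall's inequality, combined with the $L^1_t$ bounds $\int_0^T(\|\sqrt\n\dot\te\|_{L^2}^2+\|\na\dot u\|_{L^2}^2)dt\le C$ from Lemma \ref{a113.4}, will yield
\begin{equation*}
\sup_{0\le t\le T}t\|\sqrt\n\dot\te\|_{L^2}^2+\int_0^Tt\|\na\dot\te\|_{L^2}^2\,dt\le C.
\end{equation*}
The companion bound $\sup_{0\le t\le T}t\|\na^2\te\|_{L^2}^2\le C$ is then immediate from the $L^2$-elliptic inequality \eqref{op4}, whose right-hand side is controlled by the newly obtained bound together with \eqref{ae0} and \eqref{ohh}.

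The main obstacle lies in the $(F(\na u))_t$ and $(P\div u)_t$ terms: these are the only places where a time derivative of the velocity enters, and it is essential to replace $u_t=\dot u-u\cdot\na u$ and then apportion the spatial weight $\bar x^{\al/2}$ between $\na u$ and $\dot\te$ (via \eqref{ai} together with \eqref{cc1}) so that no factor of $\|u\|_{L^\infty}$ ever appears, the latter being unavailable for the Cauchy problem. Once this bookkeeping is done, everything reduces to Lemma \ref{a113.4} and Gronwall.
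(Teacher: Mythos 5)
Your proposal matches the paper's proof in all essentials: the paper likewise applies $\pa_t+\div(u\,\cdot)$ to $(\ref{q1})_3$, tests against $\dot\te$, distributes the spatial weight between $\na u\bar x^{\al/4}$ and $\dot\te\bar x^{-\al/8}$ exactly as you describe to avoid any $\|u\|_{L^\infty}$ or unweighted $\|\dot\te\|_{L^p}$ factor, multiplies the resulting differential inequality by $t$, and closes with Gronwall and Lemma \ref{a113.4}, recovering $t\|\na^2\te\|_{L^2}^2$ from \eqref{op4}. Only your intermediate bound $\|\na u\|_{L^4}\|\na^2\te\|_{L^2}\|\dot\te\|_{L^4}$ is written loosely (the unweighted $\|\dot\te\|_{L^4}$ is not available on $\r^2$), but you correct for this yourself by insisting all $\dot\te$-factors pass through the weighted inequality \eqref{cc1}, which is precisely what the paper does.
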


\begin{proof}
First, by virtue of  \eqref{a2.1}, \eqref{gff}, \eqref{ae0}, \eqref{op4}, \eqref{ai}, and \eqref{ohh}, it holds that for any $p\in[2,\infty)$,
\be\la{glf}\ba
\sup_{0\le t\le T}(\|\na u\|_{L^p}^2+\|\na u\bar x^{\al/4}\|_{L^2}^2)+\int_0^T\|\na^2\te\|_{L^2}^2dt\le C(p).
\ea\ee
Next, applying the operator $\pa_t+\div(u\cdot) $ to (\ref{q1})$_3$  leads to
\be\la{3.96}\ba
&c_\upsilon \n (\pa_t\dot \te+u\cdot\na\dot \te)-\ka \Delta \dot \te\\
&=\ka( \div u\Delta \te  -
\pa_i(\pa_iu\cdot\na \te)- \pa_iu\cdot\na \pa_i\te
)+R\n \te  \pa_ku^l\pa_lu^k -R\n \dot\te \div u\\
&\quad -R\n \te\div \dot u+( \lambda (\div u)^2+2\mu |\mathfrak{D}(u)|^2)\div u +2\lambda ( \div\dot u-\pa_ku^l\pa_lu^k)\div u\\
&\quad+ \mu (\pa_iu^j+\pa_ju^i)( \pa_i\dot u^j+\pa_j\dot u^i-\pa_iu^k\pa_ku^j-\pa_ju^k\pa_ku^i).
\ea\ee
Multiplying (\ref{3.96}) by $\dot \te,$  we deduce from integration by parts, \eqref{glf},  \eqref{cc1}, \eqref{c14}, \eqref{ae0}, and \eqref{a2.1} that
\be\la{3.99}\ba
&\frac{c_\upsilon}{2}\frac{d}{dt}\int \n|\dot\te|^2dx + \ka   \|\na\dot\te\|_{L^2}^2 \\
&\le C  \int|\na u||\dot\te|\left(|\na^2\te|+ |\na\dot u|\right)dx+  C\int  \n\te |\dot\te|(|\na\dot u|+ |\na u|^2)dx\\
&\quad+C\int |\na u|^3|\dot\te|dx+C\int\n  |\dot\te|^2|\div  u|dx +C  \int  |\na u| |\na \te| |\na\dot\te|dx \\
&\le C  \|\na u\bar x^{\al/4}\|^{1/2}_{L^2}\|\na u\|^{1/2}_{L^{12/(6-\al)}}\|\dot \te\bar x^{-{\al/8}}\|_{L^{24/\al}}(\|\na^2\te\|_{L^2}+\|\na\dot u\|_{L^2}) \\
&\quad+C\|\n \bar x\|_{L^6}\|\te \bar x^{-{1/2}}\|_{L^6}\|\dot\te \bar x^{-{1/2}}\|_{L^6}\left(\|\na \dot u\|_{L^2}+\|\na u\|_{L^4}^2\right) \\
&\quad+C\|\na u\bar x^{\al/4}\|_{L^2}\|\na u\|^{2}_{L^{20/(5-\al)}}\|\dot\te\bar x^{-{\al/4}}\|_{L^{10/\al}}
+C\|\div u\|_{L^\infty}\|\sqrt\n\dot\te\|_{L^2}^2 \\
&\quad+C   \|\na u\|_{L^4} \|\na\te\|_{L^2}^{1/2}\|\na^2\te\|_{L^2}^{1/2} \|\na\dot\te\|_{L^2} \\
&\le \frac{\ka}{2}   \|\na\dot\te\|_{L^2}^2 +C(1+\|\div u\|_{L^\infty})\|\sqrt\n\dot\te\|_{L^2}^2+C  (\|\na^2\te\|_{L^2}^2+\|\na\dot u\|_{L^2}^2+1).
\ea\ee
Multiplying \eqref{3.99} by $t$ and applying Gronwall's inequality, \eqref{t3}, (\ref{glf}), and (\ref{ae0}) infer  \eqref{522}.
 The proof of  Lemma \ref{le8} is finished.
\end{proof}

\begin{lemma}\la{le9} Under the condition (\ref{t3}), it holds that for $0\le T< T_\ast$,
\be\la{v1}\sup_{0\le t\le T}\left(\|\na(\bar x^a\n)\|_{L^2\cap L^q}+\|\na^2u\|_{L^2}\right)\le C.\ee
   \end{lemma}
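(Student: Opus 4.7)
The plan is to couple a transport-type inequality for $\omega\triangleq\bar x^a\n$, coming from equation \eqref{b9}, with the $L^p$-elliptic regularity of the momentum equation \eqref{q1}$_2$, and to close the resulting loop through a Beale--Kato--Majda logarithmic Gronwall argument. The main inputs are the bounds already established in \eqref{t3}, \eqref{t4}, \eqref{a2.1}, \eqref{c14}, \eqref{ae0}, \eqref{glf}, together with Lemmas \ref{lem-bkm} and \ref{w6}.

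First, differentiating \eqref{b9} exactly as in the derivation of \eqref{b50}--\eqref{b11} but for general $p\in\{2,q\}$, I would obtain
\begin{equation*}
\frac{d}{dt}\|\na\omega\|_{L^p}\le C(\|\na u\|_{L^\infty}+\|u\cdot\na\log\bar x\|_{L^\infty})\|\na\omega\|_{L^p}+C\|\omega\|_{L^\infty}(\|\na^2 u\|_{L^p}+1),
\end{equation*}
where the residual lower-order terms $\||\na u||\na\log\bar x|\|_{L^p}$ and $\||u||\na^2\log\bar x|\|_{L^p}$ are absorbed using \eqref{cc1}, \eqref{glf}, and \eqref{c14}, and $\|u\cdot\na\log\bar x\|_{L^\infty}$ is bounded in time by \eqref{bb111} together with \eqref{ae0}. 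Next, applying the standard $L^p$-elliptic estimate to \eqref{q1}$_2$ gives $\|\na^2 u\|_{L^p}\le C(\|\n\dot u\|_{L^p}+\|\na P\|_{L^p}+\|\na u\|_{L^p})$. For $p=2$ this yields $\|\na^2 u\|_{L^2}\le C(1+\|\na\omega\|_{L^2})$ after using \eqref{ae0} for $\|\sqrt\n\dot u\|_{L^2}$, \eqref{c14} for $\|\n\bar x^a\|_{L^\infty}$, and \eqref{glf} for $\|\na\te\|_{L^2}$. For $p=q$, interpolation between $\|\sqrt\n\dot u\|_{L^2}$ and $\|\na\dot u\|_{L^2}$ yields $\|\na^2 u\|_{L^q}\le C(1+\|\na\omega\|_{L^q})(1+\|\na\dot u\|_{L^2})$.

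The main obstacle is controlling $\|\na u\|_{L^\infty}$, since the transport inequality involves it linearly. By Lemma \ref{lem-bkm},
\begin{equation*}
\|\na u\|_{L^\infty}\le C(\|\div u\|_{L^\infty}+\|\curl u\|_{L^\infty})\log(e+\|\na^2 u\|_{L^q})+C\|\na u\|_{L^2}+C.
\end{equation*}
The factor $\|\div u\|_{L^\infty}$ is time-integrable by the blowup-criterion hypothesis \eqref{t3}. For $\|\curl u\|_{L^\infty}$ I would combine the Sobolev embedding $W^{1,q}(\r^2)\hookrightarrow L^\infty(\r^2)$ with the elliptic estimates \eqref{h19}--\eqref{h20} from Lemma \ref{w6} to obtain $\|\curl u\|_{L^\infty}\le C(1+\|\rho\dot u\|_{L^q})\le C(1+\|\na\dot u\|_{L^2})$, which is $L^2$-integrable in time by \eqref{ae0}.

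Plugging the BKM bound into the transport inequality for $f(t)\triangleq \|\na\omega\|_{L^2}+\|\na\omega\|_{L^q}$, I obtain an inequality of the schematic form
\begin{equation*}
\frac{d}{dt}f(t)\le Ch(t)\bigl(1+f(t)\bigr)\log\bigl(e+f(t)+\|\na\dot u\|_{L^2}\bigr),
\end{equation*}
where $\int_0^T h(t)\,dt\le C$ thanks to \eqref{t3} and \eqref{ae0}. A standard double-exponential (logarithmic) Gronwall argument, exactly parallel to \eqref{c9}, then yields $\sup_{0\le t\le T}f(t)\le C$, and substituting back into the $p=2$ elliptic estimate produces $\sup_{0\le t\le T}\|\na^2 u\|_{L^2}\le C$. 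The delicate technical point is arranging the matching of time weights so that the logarithmic factor from BKM is multiplied only by a time-integrable quantity, which is what permits the logarithmic Gronwall to close.
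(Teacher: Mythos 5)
Your proposal matches the paper's proof in structure and substance: a transport inequality for $\na\omega$ with $\omega=\bar x^a\n$, the $L^p$-elliptic estimate for \eqref{q1}$_2$, the Beale--Kato--Majda inequality (Lemma \ref{lem-bkm}) to control $\|\na u\|_{L^\infty}$ logarithmically, and a double-logarithmic Gronwall closure using the time-integrability of $\|\sqrt\n\dot u\|_{L^2}^2+\|\na\dot u\|_{L^2}^2+\|\na^2\te\|_{L^2}^2$ from \eqref{ae0} and \eqref{glf}. The only cosmetic differences are that you invoke \eqref{t3} directly for $\|\div u\|_{L^\infty}$ whereas the paper bounds both $\|\div u\|_{L^\infty}$ and $\|\curl u\|_{L^\infty}$ through the elliptic estimates \eqref{h19}--\eqref{h20}, and your $p=q$ elliptic bound drops the $\|\na^2\te\|_{L^2}$ factor (needed for the $\|\n\na\te\|_{L^q}$ term), but since that quantity is square-integrable in time by \eqref{glf} this does not change the Gronwall argument.
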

\begin{proof}
First, standard calculation along with \eqref{bb111}, \eqref{ae0}, \eqref{c14}, \eqref{cc1}, \eqref{a2.12}, and \eqref{a2.1} shows that for $\o=\bar x^a\n$,
 \be\la{bb50}\ba
\frac{d}{dt}\|\na \omega\|_{L^q}
&\le C(\|\na u\|_{L^\infty}+\|u\cdot\na \log \bar x\|_{L^\infty})\|\na \omega\|_{L^q}\\
&\quad+C\|\omega\|_{L^\infty}(\|\na^2u \|_{L^q}+\||\na u||\na\log\bar x|\|_{L^q}+\||u||\na^2\log \bar x\|_{L^q})\\
&\le C(\|\na u\|_{L^\infty}+1)\|\na \omega\|_{L^q}+C(\|\na u\|_{W^{1,q}}+1).
\ea\ee

Next, it follows from the standard elliptic estimate,  \eqref{a2.1}, \eqref{ae0}, \eqref{cc1}, \eqref{c14}, \eqref{glf}, \eqref{h19}, and \eqref{h20} that 
\be\la{v3}\ba
\|\na u\|_{W^{1,q}}
&\le C(\|\n \dot u\|_{L^q}+\|\n\na\te\|_{L^q}+\|\na\n\te\|_{L^q}+1)\\
&\le C(\|\n\bar x\|_{L^\infty}\|\dot u\bar x^{-1}\|_{L^q}+\|\na^2\te\|_{L^2}+(\|\na \omega\|_{L^q}+1)\|\te\bar x^{-1}\|_{W^{1,q}}+1)\\
&\le C(\|\sqrt\n \dot u\|_{L^2}+\|\na \dot u\|_{L^2}+\|\na^2\te\|_{L^2}+1)(\|\na\o\|_{L^q}+1),
\ea\ee
and
\be\ba\notag
\|\div u\|_{L^\infty}+\|\curl u\|_{L^\infty}
&\le C(\|G\|_{L^\infty}+\|\n\te\|_{L^\infty}+\|\curl u\|_{L^\infty})\\
&\le C(\| G\|_{W^{1,q}}+\|\curl u\|_{W^{1,q}}+\|\n\bar x\|_{L^\infty}\|\te\bar x^{-1}\|_{W^{1,q}})\\
&\le C(\|\n\dot u\|_{L^q}+\|\na^2\te\|_{L^2}+1)\\
&\le C(\|\sqrt\n \dot u\|_{L^2}+\|\na \dot u\|_{L^2}+\|\na^2\te\|_{L^2}+1).
\ea\ee
This combined with Lemma \ref{lem-bkm} and \eqref{a2.1} leads  to that
\be\ba\la{v6}
\|\na u\|_{L^\infty}
&\le C\left(\|{\rm div}u\|_{L^\infty}+\|\o\|_{L^\infty} \right)\log(e+\|\na^2u\|_{L^q})+C\|\na u\|_{L^2} +C\\
&\le C(\|\sqrt\n \dot u\|_{L^2}+\|\na \dot u\|_{L^2}+\|\na^2\te\|_{L^2}+1)\log(e+\|\na\o\|_{L^q})\\
&\quad+ C(\|\sqrt\n \dot u\|_{L^2}^2+\|\na \dot u\|_{L^2}^2+\|\na^2\te\|_{L^2}^2+1).
\ea\ee

Then let
\be\notag f(t)\triangleq e+\|\na \omega\|_{L^q},\quad\quad f_1(t)\triangleq e+\|\sqrt\n \dot u\|_{L^2}^2+\|\na \dot u\|_{L^2}^2+\|\na^2\te\|_{L^2}^2. \ee
Putting \eqref{v3} and \eqref{v6} into \eqref{bb50}, it holds
\be\notag f'(t)\le Cf_1(t) f(t)\ln f(t),\ee
that is,
\be\notag(\ln(\ln f))'(t)\le Cf_1(t),\ee
which combined with \eqref{glf} and \eqref{ae0}  implies
\be\la{v9}\sup_{0\le t\le T}\|\na(\bar x^a\n)\|_{L^q}\le C.\ee
Similarly, we have
\be\la{v10}\sup_{0\le t\le T}\|\na(\bar x^a\n)\|_{L^2}\le C.\ee
Finally, applying the standard $L^2$-estimate to \eqref{q1}$_2$, we obtain after using \eqref{v9}, \eqref{c14}, \eqref{cc1}, \eqref{a2.1}, and \eqref{ae0} that
\be\notag\ba
\|\na^2u\|_{L^2}&\le C(\|\n\dot u\|_{L^2}+\|\n\na\te\|_{L^2}+\|\na\n\te\|_{L^2})\\
&\le C(\|\sqrt\n\dot u\|_{L^2}+\|\na\te\|_{L^2}+\|\na\n\bar x^{a}\|_{L^q}\|\te\bar x^{-a}\|_{L^{2q/(q-2)}}\\
&\le C,
\ea\ee
which as well as  \eqref{v9} and \eqref{v10} yields \eqref{v1}. The proof of Lemma \ref{le9} is completed.
\end{proof}

With Lemmas \ref{le00}--\ref{le9} in mind, we are  in a position to extend the strong solution $(\n,u,\te)$ beyond $t\geq T_\ast$. Since the generic constant $C$ in Lemmas \ref{le00}--\ref{le9} remains uniformly bounded for all $T<T_\ast$, the functions $(\n,u,\te)(x,T_\ast)\triangleq \lim_{t\rightarrow T_\ast}(\n,u,\te)(x,t)$ satisfy the conditions \eqref{q4} imposed on the initial data at the time $t=T_\ast$. Furthermore, standard arguments yield that $\n\dot u\in C([0,T];L^2)$, which implies
$$\n\dot u(x,T_\ast)=\lim_{t\rightarrow T_\ast}\n\dot u(x,t)\in L^2.$$
Therefore, one has
\be\notag
-\mu\Delta u-(\mu+\lambda)\na\div u+R\na(\n\te)|_{t=T_\ast}=\n^{1/2}(x,T_\ast)g(x),
\ee
with
\be\notag\ba
g(x)\triangleq
\begin{cases}
-\n^{-1/2}(x,T_\ast)\n\dot u(x,T_\ast),&\text{for}\,x\in\{x|\n(x,T_\ast)>0\},\\
0,&\text{for}\,x\in\{x|\n(x,T_\ast)=0\},
\end{cases}
\ea\ee
satisfying $g\in L^2$ owing to Lemma \ref{a113.4}. Then, $(\n,u,\te)(x,T_\ast)$ also satisfy \eqref{co1}. Consequently, one can take $(\n,u,\te)(x,T_\ast)$ as the initial data and apply Theorem \ref{th1} to extend the local strong solution beyond $T_\ast$. This contradicts the assumption on $T_\ast$. We thus finish the proof of Theorem \ref{th2}.

\section{Appendix}\la{sec5}
In this appendix, we present the proof of Lemma \ref{th3}. 

Before the formal argument, we first introduce some notations used frequently later. For $1\le p\le \infty$ and integer  $k\geq 0$, we denote
$$L^p\triangleq L^p(\O),\quad W^{k,p}\triangleq W^{k,p}(\O),\quad H^{k}\triangleq W^{k,2},\quad \int f dx\triangleq \int_{B_r} f dx.$$

Now, for any fixed $\de>0$, we consider the approximate problem (\ref{a1})--(\ref{bbb}). For $r>4N_2\geq 4$, we  assume the initial data   $(\n_0^r,u_0^r,\te_0^r)$ satisfy, in addition to \eqref{2.1}, that
\be\la{t1}\int_{B_{N_2}}\n_0^r dx\geq 1/2.\ee
Then, Lemma \ref{w1} implies that there exists some $T_r>0$ such that the initial-boundary-value problem (\ref{a1})--(\ref{bbb})    has a unique classical solution $(\n^r,u^r,\te^r)$    on $B_r\times(0,T_r]$ satisfying \eqref{mn6} and \eqref{mn5} with $(\n,u,\te)$ replaced by $(\n^r,u^r,\te^r)$.    The superscript $r$ hereinafter is omitted.

Next, we have the following key uniform a priori estimates with respect to $r$ on $\psi$ defined in \eqref{bb0}, which guarantees the extension of solutions in $B_r$.
\begin{pro}\la{pr1} 
For fixed $\de>0$ and the initial data $(\n_0,u_0,\te_0)$ satisfying (\ref{2.1}), (\ref{co1}),  and (\ref{t1}), assume that $(\n,u,\te)$ is a smooth solution to the problem (\ref{a1})--(\ref{bbb})  on $\O\times(0,T_r]$ obtained in Lemma \ref{w1}. 
 We define
\be\la{hk} T_r^\ast= \sup\left\{T\le T_r\left|\sup_{0\le t\le T}\|\sqrt\n u\|_{L^2}^2\le M_2\right.\right\},\ee
for some positive constant $M_2$ determined later, which is  depending only on $\mu$,  $\lambda$, $R,$ $c_\upsilon,\,\ka,\,q,\,a,\,\eta_0,$ $N_2$,   $\psi_{0}$, and $\de$.
Then there exist   positive constants  $T_\de(\le T_r^\ast)$ and $\tilde M_2$ depending only on $\mu$,  $\lambda$, $R,$ $c_\upsilon,\,\ka,\,q,\,a,\,\eta_0,$ $N_2$,  $\psi_0$, and $\de$ 
 but independent of $r$ such that
\be\ba\notag
\psi(T_\de)\le M_2/2,
\ea\ee
\be\notag\sup_{0\le t\le T_\de}\|\na^2u\|_{L^2}^2+\int_0^{T_\de}\left(\|\na u_t\|_{L^2}^2+\|\sqrt\n\te_t\|_{L^2}^2+\|\na^2\te\|_{L^2}^2+\|\na^2 u\|_{L^{q}}^2\right)dt\le \tilde M_2,\ee
where $\psi_0$ is defined in (\ref{000}) 
and $g$ given by  (\ref{co1}) in fact is
$$g=\n_0^{-1/2}(-\mu\Delta u_0-(\mu+\lambda)\na\div u_0+R\na(\n_0\te_0)).$$
\end{pro}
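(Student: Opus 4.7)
The strategy is to mimic the structure of the proof of Proposition \ref{pr2} (that is, the scheme of Lemmas \ref{lee0}--\ref{lee7}), adapting every step to the bounded domain $B_r$ while now allowing all constants to depend on $\de$ but not on $r$. As a preliminary step, \eqref{t1} combined with the definition of $T_r^\ast$ in \eqref{hk} and the continuity equation $\eqref{a1}_1$ yields, exactly as for \eqref{t2}, a uniform lower bound $\inf_{0\le t\le \tau}\int_{B_{2N_2}}\n\,dx\ge 1/4$ on a short interval $[0,\tau]$ with $\tau=\tau(M_2,N_2,\|\n_0\|_{L^1})$ independent of $r$. Since $r\ge 4N_2\ge 1$, Lemma \ref{w5} then supplies the weighted Poincar\'e-type bounds \eqref{uu}--\eqref{uu1} on $B_r$ with constants independent of $r$.

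Next I would estimate the three components of $\psi$ in turn. For $\psi_1$, testing $\eqref{a1}_2$ against $u$ and $2u_t$ and $\eqref{a1}_3$ against $\te$ proceeds just as in Lemma \ref{le01}; the boundary contributions in the integration by parts vanish due to $u|_{\pa B_r}=0$ and $\na\te\cdot n|_{\pa B_r}=0$, yielding $\psi_1(T)\le C+C\int_0^T\psi^A\,dt$. For $\psi_2$, the transport equation \eqref{b9} for $\omega=\bar x^a\n$ together with the standard $L^q$ elliptic estimate on $\eqref{a1}_2$ (now interior elliptic regularity on $B_r$, which produces no boundary terms for the quantity $\|\na u\|_{W^{1,q}(B_r)}$ because $u=0$ on $\pa B_r$) gives the analog of Lemma \ref{le3}. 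The decisive step is the higher-order bound on $\psi_3$ obtained by differentiating $\eqref{a1}_2$ in $t$ and testing by $u_t$, together with testing $\eqref{a1}_3$ by $2\te_t$ and by $2\n^2\te_t$; combining these three identities as in Lemma \ref{le4} produces the desired estimate once one disposes of the dissipation-temperature coupling $\int F(\na u)\te\,dx$.

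The main obstacle is precisely this coupling: the $\de$-independent argument of Lemma \ref{lee0} rested on the cancellation identity \eqref{zer}, which on $B_r$ breaks down because integration by parts against the momentum equation now leaves non-vanishing boundary contributions $\int_{\pa B_r}\mu\na u\cdot n$, $\int_{\pa B_r}(\div u)n$, and $\int_{\pa B_r}Pn$, so the weighted bound $\|\na u\bar x^{\al/2}\|_{L^2}$ of \eqref{hii} is not available at this stage. The remedy exploits the fact that the conclusion of Proposition \ref{pr1} permits $\de$-dependent constants: multiplying $\eqref{a1}_3$ by $\te$ and using the damping term yields $\de\int|\te|^2dx\le C$, hence $\|\te\|_{L^2}\le C\de^{-1/2}$, which together with $L^4$-elliptic estimates on $u$ (obtained from $\eqref{a1}_2$ in terms of $\|\sqrt\n u_t\|_{L^2}$, $\|\na\te\|_{L^2}$, and lower-order quantities) provides the crude but sufficient bound $\left|\int F(\na u)\te\,dx\right|\le C(\de)\|\na u\|_{L^4}^2\|\te\|_{L^2}\le C(\de)\psi^A$ and analogously for $\int F(\na u)_t\te\,dx$ after integration by parts in time.

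With the three component estimates assembled, one then repeats verbatim the continuation argument of Proposition \ref{pr2}: $\psi(T)\le C_1(1+\int_0^T\psi^A\,dt)$ for $T\le\tau$ with $C_1,A$ independent of $r$; choosing $M_2=4C_1$ and $T_\de=\min\{\tau,M_2^{-A}\}$, Gronwall's inequality gives $\psi(T_\de)\le M_2/2$, and the remaining bounds on $\|\na^2u\|_{L^2}$, $\|\na u_t\|_{L^2}$, $\|\sqrt\n\te_t\|_{L^2}$, $\|\na^2\te\|_{L^2}$, and $\|\na^2u\|_{L^q}$ are then read off from the standard elliptic regularity applied to $\eqref{a1}_2$, $\eqref{a1}_3$ together with the bound on $\psi(T_\de)$, yielding the constant $\tilde M_2$.
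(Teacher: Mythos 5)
Your overall strategy matches the paper's: extend the lower bound on $\int_{B_{2N_2}}\n\,dx$ to get the weighted Poincar\'e inequalities independently of $r$ (this is Lemma \ref{lee00}); estimate $\psi_1,\psi_2,\psi_3$ in turn; replace the cancellation-based bound of Lemma \ref{lee0} (unavailable on $B_r$ because the whole-space singular-integral representation \eqref{key3} and the cancellation \eqref{zer} both require $\Omega=\r^2$) by a $\de$-dependent bound using the damping term; and close by Gronwall with $M_2=4C(\de)$, $T_\de=\min\{\tau,M_2^{-A}\}$. You have also correctly identified $\int F(\na u)\te\,dx$ as the decisive term.

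However, there is a genuine gap in your treatment of that term. You propose
$\bigl|\int F(\na u)\te\,dx\bigr|\le C(\de)\|\na u\|_{L^4}^2\|\te\|_{L^2}\le C(\de)\psi^A$
and call this sufficient. It is not, because this term is not merely integrated in time: in the derivation of the $\psi_3$-bound it sits \emph{inside} the time derivative in \eqref{b33}/\eqref{b44}, so after integrating from $0$ to $T$ the quantity $\int F(\na u)\te\,dx$ evaluated at $t=T$ appears on the right-hand side, and it must be absorbed by $\frac12\psi_3(T)$ via Young's inequality. That absorption requires the bound to be \emph{strictly sublinear} in $\psi_3$. With your H\"older choice, $\|\na u\|_{L^4}^2\lesssim\|\na u\|_{L^2}\|\na u\|_{H^1}$ carries a factor $\psi_3^{1/2}$ through $\|\na u\|_{H^1}$, and $\|\te\|_{L^2}\lesssim\de^{-1/2}\psi_3^{1/2}$, for a total power $\psi_3^{1}$ -- which cannot be absorbed. (If instead you bound $\|\na u\|_{L^4}$ directly by an $L^{4/3}$ elliptic estimate on $\eqref{a1}_2$ you get $\psi_3^{3/2}$, which is worse.) The paper avoids this by a more carefully balanced H\"older,
$\bigl|\int F(\na u)\te\,dx\bigr|\le C\|\na u\|_{L^2}^{3/2}\|\na u\|_{L^6}^{1/2}\|\te\|_{L^2}^{1/3}\|\te\|_{H^1}^{2/3}\le C(\de)\psi_1^{5/4}\psi_2^{3/4}\psi_3^{3/4}$
(its \eqref{terms}), which uses only one-half of a power of $\|\na u\|_{L^6}$ (the sole $\na u$-factor carrying $\psi_3^{1/2}$) and spreads the rest over $\|\na u\|_{L^2}$, producing the sublinear exponent $3/4$ in $\psi_3$. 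The analogous sublinear bound $\bigl|\int F(\na u)_t\te\,dx\bigr|\le C(\de)\psi^A\|\na u_t\|_{L^2}$ is genuinely easy because it is linear in $\|\na u_t\|_{L^2}$ and is integrated in time, so your ``analogously'' remark is fine there; but for the $t=T$ boundary term of $\int F(\na u)\te\,dx$ you must redo the H\"older as above (or combine it with an additional Young step such as $\int F(\na u)\te\,dx\le\epsilon\|\na\te\|_{L^2}^2+C(\epsilon)\|\na u\|_{L^{8/3}}^{8/3}\|\te\|_{L^2}^{2/3}$, which also yields a sublinear power $\psi_3^{5/6}$), otherwise the $\psi_3$-estimate does not close.
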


In the following section, we assume  $C$ and $A$  denote some generic constants depending only on $\mu,\,\lambda,\,R,\,c_\upsilon,\,\ka$, $q$, $a$, $\eta_0$, $N_2$, and $\psi_0$, but independent of  $\de$ and $r$.

To obtain Proposition \ref{pr1}, we need the following lemmas. We start with  some elementary weighted estimates.
\begin{lemma}\la{lee00}
Under the conditions of Proposition \ref{pr1}, 
 there exists a positive time $T_2=T_2(  M_2,N_2,\|\n_0\|_{L^1})$ independent of  $r$ such that for any $0<t\le T_2$,  $v\in \tilde D^{1,2}(\O)$,  $\ve>0$, and $\eta\in(0,1],$ 
\be \la{dd1}\ba
\|v\bar x^{-1}\|_{L^2}^2+\|v\bar x^{-\eta}\|_{L^{(2+\ve)/\eta} }^2
\le C(\epsilon,\eta) \|\sqrt{\n}v\|_{L^2}^2 +C(\epsilon,\eta) (\|\n\|_{L^\infty}+1) \|\na v\|_{L^2 }^2.
\ea\ee
Moreover, for any $0<t\le T_2$,   $\ve>0$, and $\eta\in(0,1],$  the following estimates hold
\be\la{x2}\ba
\|u\bar x^{-\eta}\|_{L^{(2+\ve)/\eta}}\le C(\epsilon,\eta)\psi_1^{1/2}\psi_2^{1/2},\quad \|\n^{\eta}u\|_{L^{(2+\ve)/\eta}}\le C(\epsilon,\eta)\psi_1^{1/2}\psi_2^{1/2+\eta},\ea\ee
\be\la{x3}\ba
\|\te\bar x^{-\eta}\|_{L^{(2+\ve)/\eta}}
\le C(\epsilon,\eta)\psi_1^{{1/2}}\psi_2^{1/2}\psi_3^{1/2},\quad \|\n^{\eta}\te\|_{L^{(2+\ve)/\eta}}
\le C(\epsilon,\eta)\psi^{1+\eta},\ea\ee
with $\psi_1$, $\psi_2$, and $\psi_3$ respectively defined in (\ref{b0}), (\ref{bq0}), and (\ref{bp0}).
\end{lemma}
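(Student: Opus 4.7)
The plan is to mirror the structure already laid down in the proof of Lemma \ref{lee0}, now on the bounded domain $\O=B_r$. The heart of the argument is the observation that on a short time interval the mass of $\n$ stays trapped in a fixed sub-ball, after which Lemma \ref{w5} applies verbatim to yield \eqref{dd1}, and then setting $v=u$ and $v=\te$ produces \eqref{x2}--\eqref{x3}.

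First I would establish the mass-concentration bound
\be\notag \inf_{0\le t\le T_2}\int_{B_{2N_2}}\n\,dx\ge \frac{1}{4},\ee
for some time $T_2=T_2(M_2,N_2,\|\n_0\|_{L^1})$ independent of $r$ and $\de$. Following the template in \cite[Lemma~3.2]{liliang}, one tests $\eqref{a1}_1$ against a cutoff $\varphi\in C_0^\infty(B_{4N_2})$ with $\varphi\equiv 1$ on $B_{2N_2}$ (permissible since the hypothesis $r>4N_2$ makes $\mathrm{supp}\,\varphi\Subset\O$), integrates by parts, and combines Cauchy--Schwarz, the mass conservation $\|\n\|_{L^1}(t)=\|\n_0\|_{L^1}$, and the defining inequality $\|\sqrt{\n}u\|_{L^2}^2\le M_2$ of $T_r^\ast$ in \eqref{hk} to obtain $\left|\frac{d}{dt}\int\n\varphi\,dx\right|\le C\|\n_0\|_{L^1}^{1/2}M_2^{1/2}$. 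Together with the initial lower bound \eqref{t1} this yields the claim for $T_2$ chosen suitably small.

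Second, once the sub-ball mass bound is available, Lemma \ref{w5} applies with $N=2N_2$, $M=1/4$, delivering \eqref{dd1} for any $v\in\tilde D^{1,2}(\O)$, $\ve>0$, and $\eta\in(0,1]$; the requisite $\|\n\|_{L^\infty}$-control is automatic from \eqref{mn5}. Third, \eqref{x2} and \eqref{x3} follow by plugging $v=u$ and $v=\te$ into \eqref{dd1}: one uses $\|\sqrt{\n}u\|_{L^2}^2\le\psi_1$, $\|\na u\|_{L^2}^2\le\psi_1$, and $\|\n\|_{L^\infty}\le\|\bar x^a\n\|_{L^\infty}\le\psi_2$ for the $u$-part, and analogously $\|\sqrt{\n}\te\|_{L^2}^2\le\psi_1$, $\|\na\te\|_{L^2}^2\le\psi_3$ for the $\te$-part, absorbing constants via $\psi_1,\psi_2,\psi_3\ge 1$. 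For the weighted-density versions I would factorize $\n^\eta v=(\n\bar x^a)^\eta(\bar x^{-a\eta}v)$, bound $\|(\n\bar x^a)^\eta\|_{L^\infty}\le\psi_2^\eta$, and exploit $a>1$ together with $\bar x\ge 1$ to get $\bar x^{-a\eta}\le\bar x^{-\eta}$, so that the previously established bound on $\bar x^{-\eta}v$ can be recycled; this is what accounts for the extra $\psi_2^\eta$ factor in \eqref{x2} and \eqref{x3}.

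The only non-cosmetic step is the first one, the mass-concentration estimate, which is standard but requires care to make $T_2$ explicitly independent of both $r$ and $\de$; everything else is a direct transcription of the argument already carried out for \eqref{bb1}--\eqref{uu1} in Lemma \ref{lee0}, so no new obstacle is expected.
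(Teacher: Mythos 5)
Your proposal follows the paper's own route exactly: (i) use mass conservation, \eqref{t1}, and \eqref{hk} to fix a short time $T_2$ on which a definite amount of mass stays in a fixed ball (the paper cites \cite[Lemma~3.2]{liliang} for this and you supply the cutoff argument, which is the standard one); (ii) feed that into Lemma~\ref{w5} to get \eqref{dd1}; (iii) specialize $v=u,\te$ and factor out $(\n\bar x^a)^\eta$ for the weighted-density versions. Two small points are worth flagging, neither fatal.

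First, with a cutoff supported in $B_{4N_2}$ and $\equiv 1$ on $B_{2N_2}$ you only conclude $\inf_{t\le T_2}\int_{B_{4N_2}}\n\,dx\ge 1/4$, not the paper's $\int_{B_{2N_2}}\n\,dx\ge 1/4$; to match you should take $\varphi$ supported in $B_{2N_2}$ with $\varphi\equiv1$ on $B_{N_2}$. Since Lemma~\ref{w5} only needs mass concentration in \emph{some} fixed ball $B_N$ with $1\le N\le r$, this is cosmetic — it merely changes the constant absorbed into $C(\epsilon,\eta)$.

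Second, and more substantively: for the second inequality of \eqref{x3} your ``recycling'' scheme delivers $\psi_2^\eta\,\psi_1^{1/2}\psi_2^{1/2}\psi_3^{1/2}$, which is only $\le\psi^{3/2+\eta}$, \emph{not} the claimed $\psi^{1+\eta}$. The exponent $1+\eta$ does not follow from the product-form bound $\psi_1^{1/2}\psi_2^{1/2}\psi_3^{1/2}$ on $\|\te\bar x^{-\eta}\|_{L^{(2+\ve)/\eta}}$; you have to go back to the sum form coming directly out of Lemma~\ref{w5}, namely
$$\|\te\bar x^{-\eta}\|_{L^{(2+\ve)/\eta}}\le C\bigl(\|\sqrt\n\te\|_{L^2}+(\|\n\|_{L^\infty}+1)^{1/2}\|\na\te\|_{L^2}\bigr)\le C\bigl(\psi_1^{1/2}+\psi_2^{1/2}\psi_3^{1/2}\bigr)\le C\psi,$$
using $\psi_i\le\psi$ and $\psi\ge 1$, and only \emph{then} multiply by $\psi_2^\eta\le\psi^\eta$ to arrive at $C\psi^{1+\eta}$. (Compare the paper's own \eqref{uu1}, which deliberately keeps the sum form $\psi_1^{1/2}+\psi_2^{1/2}\|\na\te\|_{L^2}$ precisely so that the second estimate comes out right.) For \eqref{x2} your recycling happens to give exactly $\psi_1^{1/2}\psi_2^{1/2+\eta}$ because there both $\|\sqrt\n u\|_{L^2}^2$ and $\|\na u\|_{L^2}^2$ sit under $\psi_1$, so the product and sum forms coincide up to a factor of $2$; that coincidence does not survive for $\te$, where $\|\na\te\|_{L^2}^2$ lives in $\psi_3$.
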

\begin{proof}
It follows from $\eqref{a1}_1$ that for any $t\in(0,T_r]$,
$\|\n\|_{L^1}(t)=\|\n_0\|_{L^1},$
which together with \eqref{t1} and  \eqref{hk} yields
$$\inf_{0\le t\le T_2}\int_{B_{2N_2}}\n dx\geq \frac{1}{4},$$
for some positive constant $T_2$, which depends only on $M_2$, $N_2$, and  $\|\n_0\|_{L^1}$ and satisfies $T_2\le T_r^\ast$ for any $r>1$.  Then, combining this with Lemma \ref{w5} leads to \eqref{dd1}--\eqref{x3}. 
\end{proof}

Next, applying the standard elliptic estimates to \eqref{a1}$_2$,  \eqref{a1}$_3$ and using Sobolev inequality, \eqref{dd1}--\eqref{x3}, one gets  the following lemma immediately (see \eqref{b04}, \eqref{b111}, \eqref{b05}--\eqref{b112}, and \eqref{der} for the detailed proof). 

\begin{lemma}\la{jia}
Let $T_2$ be as in Lemma \ref{lee00}. Then for $q$ as in Theorem \ref{th1} and $\eta\in(0,1],$ we have
\be\la{b044}\|\na u\|_{H^1}\le C\psi^A,\quad\|u\bar x^{-\eta}\|_{L^\infty}\le C(\eta)\psi^A,\ee
\be\la{b110}\|\na \te\|_{H^1}\le C\psi^A(\|\sqrt\n\te_t\|_{L^2}+1),\quad\|\te\bar x^{-\eta}\|_{L^\infty}\le C(\eta)\psi^A(\|\sqrt\n\te_t\|_{L^2}+1),\ee
\be\la{bqq}\|\na u\|_{W^{1,q}}\le C\psi^A(\|\na u_t\|_{L^2}+\|\sqrt\n\te_t\|_{L^2}+1),\ee
\be\la{bbq}\|\na^2\te\|_{H^{1}}\le C\psi^A(\|\sqrt\n\te_t\|_{L^2}+\|\na u_t\|_{L^2}+\|\na\te_t\|_{L^2}+1).\ee
\end{lemma}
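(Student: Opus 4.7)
The plan is to derive the four pairs of bounds in sequence by applying the standard elliptic $L^p$-estimate to $\eqref{a1}_2$ and the standard elliptic $L^p$-estimate to $\eqref{a1}_3$ on $B_r$ (with constants uniform in $r\ge 1$), and then controlling the resulting right-hand sides by the weighted Lebesgue inequalities \eqref{dd1}--\eqref{x3} of Lemma \ref{lee00} together with Gagliardo-Nirenberg interpolation. This is essentially the same calculation performed in Section \ref{sec3} in the whole-space setting (cf.\ \eqref{b04}, \eqref{b111}, \eqref{b05}--\eqref{b112}, \eqref{der}), now transcribed to the bounded domain.

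First I would prove \eqref{b044}. Rewriting $\eqref{a1}_2$ as a stationary Lam\'e system with forcing $-\n u_t-\n u\cdot\na u-\na P$, the $L^2$ elliptic estimate controls $\|\na u\|_{H^1}$ by the $L^2$ norms of the three forcing terms plus $\|\na u\|_{L^2}$. The inertial term is handled by $\|\n u_t\|_{L^2}\le\|\n\|_{L^\infty}^{1/2}\|\sqrt\n u_t\|_{L^2}$; the convective term is split as $\|\n u\|_{L^4}\|\na u\|_{L^4}$, with the first factor bounded by \eqref{x2} and the second interpolated between $L^2$ and $H^1$, producing a $\|\na u\|_{H^1}^{1/2}$ factor that is absorbed by Young's inequality; the pressure gradient is written as $\|\n\|_{L^\infty}\|\na\te\|_{L^2}+\|\na\n\bar x^a\|_{L^q}\|\te\bar x^{-a}\|_{L^{2q/(q-2)}}$, with the last factor controlled by \eqref{x3}. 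The companion $L^\infty$ bound on $u\bar x^{-\eta}$ then follows from the weighted Sobolev embedding
\[
\|u\bar x^{-\eta}\|_{L^\infty}\le C(\eta)\bigl(\|u\bar x^{-\eta}\|_{L^{4/\eta}}+\|\na(u\bar x^{-\eta})\|_{L^3}\bigr),
\]
expanding the gradient and invoking \eqref{x2}, \eqref{dd1}, and the $H^2$ control just established.

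For \eqref{b110}, the $L^2$ elliptic estimate applied to $-\ka\Delta\te+\de\te=-c_\upsilon\n(\te_t+u\cdot\na\te)+F(\na u)-P\div u$ controls $\|\na\te\|_{H^1}$ by the $L^2$ norm of the right-hand side; the transport term is bounded by $\|\n\bar x\|_{L^\infty}\|u\bar x^{-1}\|_{L^\infty}\|\na\te\|_{L^2}$ using the $L^\infty$ bound just proved, the dissipation by $\|F(\na u)\|_{L^2}\le C\|\na u\|_{L^2}\|\na u\|_{H^1}$, and $\|P\div u\|_{L^2}\le\|\n\te\|_{L^4}\|\na u\|_{L^4}$ by \eqref{x3}. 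The weighted $L^\infty$ bound for $\te\bar x^{-\eta}$ follows from the same Sobolev embedding trick. For \eqref{bqq} I would apply the $L^q$ elliptic estimate to $\eqref{a1}_2$; the only genuinely new step is controlling $\|\n u_t\|_{L^q}\le\|\n\bar x\|_{L^\infty}\|u_t\bar x^{-1}\|_{L^q}$ by \eqref{dd1} with exponent $q$, which produces the source term $\|\na u_t\|_{L^2}$ on the right-hand side. Finally, \eqref{bbq} comes from the $H^1$ elliptic estimate on $\eqref{a1}_3$: one differentiates the forcing and estimates $\na(\n\te_t)$, $\na(\n u\cdot\na\te)$, $\na F(\na u)$, $\na(P\div u)$ term-by-term, with the second-derivative terms on $u$ handled via the $W^{1,q}$ bound \eqref{bqq} just obtained.

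The main obstacle is bookkeeping rather than depth: each nonlinear product must be split so that one factor is taken in $L^\infty$ (or a large Lebesgue space) via the weighted estimates of Lemma \ref{lee00}, while the other absorbs into $\psi$ or into the admissible source terms $\|\na u_t\|_{L^2}$, $\|\sqrt\n\te_t\|_{L^2}$, $\|\na\te_t\|_{L^2}$ on the right-hand side. A minor technical point is checking that the constants in the elliptic $L^p$-estimates on $B_r$ are uniform in $r\ge 1$, which is standard since the family $\{B_r\}_{r\ge 1}$ has uniformly bounded geometry. No new weighted inequality beyond Lemma \ref{lee00} is required, so once the splittings are fixed the argument is mechanical.
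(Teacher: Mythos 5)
Your proposal is correct and follows essentially the same route as the paper: the paper's proof of Lemma \ref{jia} consists precisely of applying the standard elliptic estimates to $\eqref{a1}_2$ and $\eqref{a1}_3$ and transcribing the whole-space computations \eqref{b04}, \eqref{b111}, \eqref{b05}--\eqref{b112}, and \eqref{der} to $B_r$, using \eqref{dd1}--\eqref{x3} exactly as you describe. The term-by-term splittings you give (including the weighted embedding for the $L^\infty$ bounds and the appearance of $\|\na u_t\|_{L^2}$ via $\|u_t\bar x^{-1}\|_{L^q}$) match the paper's.
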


Now, we can establish the following basic a priori estimates, where the analogue proof as the ones in Subsection \ref{sec3.2} shall be omitted for simplicity. In fact, in view of the integrability of temperature being available  and \eqref{dd1}--\eqref{bqq} being  the same as the ones in Subsection \ref{sec3.2},  only the terms involving the dissipative function $F(\na u)$ in Lemmas \ref{le01} and  \ref{le4} need to be re-estimated.

\begin{lemma}\la{lee4}
 Let $T_2$ be as in Lemma \ref{lee00}. Then there exist positive  constants $C$, $C(\de)$, and $A\geq1$ such that for any $0<T\le T_2$,
 \be\ba\notag
\psi_1(T)\le C+C(\de)\int_0^T\psi^Adt.
\ea\ee
\end{lemma}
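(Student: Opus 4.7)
The strategy mirrors the proof of Lemma \ref{le01}, with Lemmas \ref{lee00} and \ref{jia} supplying, inside $B_r$, the same weighted-Sobolev toolkit that \eqref{bb1}--\eqref{uu1} and \eqref{b04}--\eqref{b111} provided in $\r^2$. Concretely, I would (i) test $\eqref{a1}_2$ against $u$ to obtain
\[\tfrac{1}{2}\tfrac{d}{dt}\|\sqrt\n u\|_{L^2}^2+\mu\|\na u\|_{L^2}^2+(\mu+\la)\|\div u\|_{L^2}^2\le C\psi^A;\]
(ii) test $\eqref{a1}_2$ against $2u_t$ to obtain an evolution equation for $\mu\|\na u\|_{L^2}^2+(\mu+\la)\|\div u\|_{L^2}^2-2\int P\div u\,dx$, in which the troublesome $-2\int P_t\div u\,dx$ is rewritten via $\eqref{a1}_3$---the analogue of identity \eqref{pf} now carrying a $\de\te$ term---as a sum of contributions from $\na\te$, $\de\te$, $F(\na u)$ and $P\div u$; (iii) test $\eqref{a1}_3$ against $\te$ to control $\|\sqrt\n\te\|_{L^2}^2$ and produce dissipation in $\ka\|\na\te\|_{L^2}^2+\de\|\te\|_{L^2}^2$; and (iv) close the loop with a separate evolution estimate for $\|P\|_{L^2}^2$ exactly as in Lemma \ref{le01}. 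Every bound appearing there, except for $\int F(\na u)\te\,dx$, transfers verbatim once \eqref{b044}--\eqref{bqq} replace their whole-space analogues.

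The whole novelty is the re-estimation of $\int F(\na u)\te\,dx$, which in the $\r^2$ setting was handled by the sharp weighted bound \eqref{term} derived from the cancellation condition \eqref{zer}---a device unavailable on $B_r$. Here, instead, I exploit that the damping term encodes $\de\|\te\|_{L^2}^2\le\psi_3$ directly. Writing
\[\left|\int F(\na u)\te\,dx\right|\le C\|\na u\|_{L^4}^2\|\te\|_{L^2}\le C\|\na u\|_{L^2}\|\na u\|_{H^1}\,\de^{-1/2}\bigl(\de\|\te\|_{L^2}^2\bigr)^{1/2}\]
and invoking the $H^1$-bound on $\na u$ from \eqref{b044}, one gets $|\int F(\na u)\te\,dx|\le C(\de)\psi^A\psi_3^{1/2}\le C(\de)\psi^A$ after using $\psi_3\le\psi$ and renaming $A$. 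This is the one place where the constant must depend on $\de$, which is exactly why Proposition \ref{pr1} is stated only for fixed $\de>0$. The analogous $\de$-dependent treatment applies to any stray $\de\|\te\|_{L^2}\|\na u\|_{L^2}$ factors arising when differentiating $P_t$ through $\eqref{a1}_3$.

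Summing (i)--(iv), absorbing small terms by Cauchy's inequality, and applying the $\|P\|_{L^2}^2$ bound (whose estimate is identical to the one at the end of Lemma \ref{le01} since it uses only \eqref{b044}, \eqref{b110}) then produces
\[\psi_1(T)+\sup_{0\le t\le T}\|P\|_{L^2}^2\le C+C(\de)\int_0^T\psi^A\,dt,\]
which is the claim. The only conceptual obstacle is the $F(\na u)\te$ term; mechanically it is handled by the damping as above, so once Lemma \ref{jia} is in hand the lemma reduces essentially to tracking constants through the template of Lemma \ref{le01}.
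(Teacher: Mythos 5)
Your proposal is correct and follows essentially the same route as the paper: the paper's proof of this lemma also reduces everything to the template of Lemma \ref{le01} and re-estimates only $\int F(\na u)\te\,dx$, using the damping to control $\|\te\|_{L^2}\le \de^{-1/2}\psi_3^{1/2}$ at the price of a $\de$-dependent constant. The only cosmetic difference is the H\"older/interpolation split (the paper uses $\|\na u\|_{L^{12/5}}^2\|\te\|_{L^6}$ via \eqref{nau} to record the sharper power $\psi_3^{3/4}$ needed again in Lemma \ref{lee3}, whereas you use $\|\na u\|_{L^4}^2\|\te\|_{L^2}$ with \eqref{b044}), and either bound suffices here since the term is integrated in time.
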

\begin{proof}
Note that  \eqref{nau} still holds here, i.e.,
$$\|\na u\|_{L^6}\le C\psi_1\psi_2^{3/2}\psi_3^{1/2},$$
which implies
\be\la{terms}\ba
\left|\int F(\na u)\te dx\right|
\le C\|\na u\|_{L^2}^{3/2}\|\na u\|_{L^6}^{1/2}\|\te\|_{L^2}^{1/3}\|\te \|_{H^1}^{2/3}\le C(\de)\psi_1^{5/4}\psi_2^{3/4}\psi_3^{3/4}.
\ea\ee

Replacing \eqref{term} with \eqref{terms} in \eqref{on} and proceeding as the same calculation as in Lemma \ref{le01},  we can finish the proof of Lemma \ref{lee4} smoothly.
\end{proof}

\begin{lemma}\la{lee6}
Let  $q$  be as in Theorem \ref{th1} and $T_2$ as in Lemma \ref{lee00}. Then for any $p'>2$, there exist  positive constants $C$, $C(p')$, and $A\geq 1$ such that for any $0<T\le T_2$,
\be\notag
\psi_2(T)\le C+C(p')\psi_2^{q/(2q+p'q-2p')}(T)\left(1+\int_0^T\psi^Adt\right).\ee
\end{lemma}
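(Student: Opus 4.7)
The plan is to mirror the proof of Lemma \ref{le3} using the weighted estimates of Lemmas \ref{lee00} and \ref{jia} in place of their main-text analogues. Setting $\omega \triangleq \bar{x}^{a}\n$, equation $\eqref{a1}_1$ yields the transport identity $\p_t\omega + \div(\omega u) - a\omega u\cdot\na\log\bar{x} = 0$. Multiplying by $p\omega^{p-1}$ and integrating gives the $p$-independent bound $\frac{d}{dt}\|\omega\|_{L^p} \le C(\|\div u\|_{L^\infty} + \|u\cdot\na\log\bar{x}\|_{L^\infty})\|\omega\|_{L^p}$; differentiating the identity in $x$ and taking an $L^2$ inner product with $\na\omega$ produces the companion inequality for $\|\na\omega\|_{L^2}$ with an extra $C\|\omega\|_{L^\infty}(\|\na^2 u\|_{L^2} + \||\na u||\na\log\bar{x}|\|_{L^2} + \||u||\na^2\log\bar{x}|\|_{L^2})$ term on the right.

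Next, to close the estimate I would apply the two-dimensional Gagliardo--Nirenberg interpolation $\|\na u\|_{L^\infty} \le C\|\na u\|_{L^{p'}}^{p'(q-2)/(2q+p'q-2p')}\|\na u\|_{W^{1,q}}^{2q/(2q+p'q-2p')}$ for $p' > 2$. The factor $\|\na u\|_{L^{p'}}$ is absorbed into $\psi^A$ via Sobolev embedding and \eqref{b044}, while $\|\na u\|_{W^{1,q}}$ is controlled by $C\psi^A(\|\na u_t\|_{L^2} + \|\sqrt{\n}\te_t\|_{L^2} + 1)$ through \eqref{bqq}. The lower-order weights $\|u\cdot\na\log\bar{x}\|_{L^\infty}$ and $\|\omega\|_{L^\infty}$ are controlled respectively by $C(\eta)\psi^A$ from \eqref{b044} (for any fixed $\eta\in(0,1]$) and by $\psi_2$ itself, and the remaining geometric factors $|\na\log\bar{x}|$, $|\na^2\log\bar{x}|$ are bounded.

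Integrating in time and applying Hölder's inequality with conjugate exponents $2/\alpha$ and $2/(2-\alpha)$, where $\alpha = 2q/(2q+p'q-2p') < 1$, I would extract a factor $\psi_3^{\alpha/2}(T) = \psi_3^{q/(2q+p'q-2p')}(T)$ from the time integral of $(\|\na u_t\|_{L^2} + \|\sqrt{\n}\te_t\|_{L^2} + 1)^{\alpha}$, leaving $C(1 + \int_0^T \psi^A\,dt)$ behind. Since the evolution inequality for $\|\omega\|_{L^p}$ has a $p$-independent constant, the bound passes to the limit $p\to\infty$; together with mass conservation $\|\omega\|_{L^1}(t)\le\|\omega_0\|_{L^1}$ (from $\eqref{a1}_1$) and the $\|\na\omega\|_{L^2}$ estimate this yields the desired control on $\psi_2(T)$.

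The main technical subtlety will be the Hölder extraction in the final step: one must verify that the exponent $\alpha/2 = q/(2q+p'q-2p')$ emerges exactly and that the remaining factors combine into $C(p')(1+\int_0^T \psi^A\,dt)$. Since this mirrors the computation already presented in Lemma \ref{le3}, no essential new obstacle arises beyond tracking the $\de$-dependence of the constants through Lemmas \ref{lee00} and \ref{jia}.
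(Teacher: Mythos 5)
Your proposal reproduces, step for step, the paper's proof of Lemma \ref{le3}, which is precisely the route the paper indicates for Lemma \ref{lee6} (whose proof it omits as ``analogous''): the transport estimates for $\omega=\bar x^{a}\n$ in $L^p$ (with $p$-independent constant) and for $\na\omega$, the Gagliardo--Nirenberg interpolation of $\|\na u\|_{L^\infty}$ between $L^{p'}$ and $W^{1,q}$ with exponents $p'(q-2)/(2q+p'q-2p')$ and $2q/(2q+p'q-2p')$, the use of \eqref{b044} and \eqref{bqq} in place of \eqref{b04} and \eqref{r2}, and the H\"older extraction in time. Two cosmetic remarks: the factor you extract is $\psi_3^{q/(2q+p'q-2p')}(T)$, exactly as in Lemma \ref{le3}, and the $\psi_2$ in the displayed inequality of Lemma \ref{lee6} is evidently a misprint for $\psi_3$ (this is also what the exponent bookkeeping in Lemma \ref{lee3} requires); also $\|\omega\|_{L^1}$ is not conserved by $\eqref{a1}_1$, but this is harmless since your $p$-uniform $L^p$ transport bound already covers $p=1$.
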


\begin{lemma}\la{lee3}
Let $T_2$ be as in Lemma \ref{lee00}. Then there exist positive  constants  $C(\de)$ and $A\geq1$ such that for  any $0<T\le T_2$,
\be\ba\notag
\psi_3(T)\le C(\de)\left(1+\int_0^T\psi^A dt\right).
\ea\ee
\end{lemma}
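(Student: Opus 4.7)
\textbf{Plan for Lemma \ref{lee3}.}
The proof mirrors the four-step structure of Lemma \ref{le4} but with two simplifications: in the bounded domain with damping $\delta\theta$ we have direct control $\|\theta\|_{L^2}^2\le \delta^{-1}\psi_3$, which both avoids the need for the $\rho^{3/2}\theta_t$ technique (Step 3 of Lemma \ref{le4}) and is the source of the $\delta$-dependence in $C(\delta)$. All the weighted inequalities \eqref{dd1}--\eqref{bbq} are exactly the same as \eqref{bb1}--\eqref{b112} used in Lemma \ref{le4}, so the nonlinear estimates transfer verbatim, and only the genuinely new terms need attention.

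First I would differentiate $\eqref{a1}_2$ in $t$, test against $u_t$, and estimate the four resulting terms $I_1,\dots,I_4$ precisely as in \eqref{b27}--\eqref{b30}. The only change occurs in $I_4=\int P_t\,\div u_t\,dx$: writing $P_t=R\rho\theta_t+R\theta\rho_t$ and using $\|\rho\theta_t\|_{L^2}\le\|\rho\|_{L^\infty}^{1/2}\|\sqrt\rho\theta_t\|_{L^2}$ directly (in place of the interpolation between $\sqrt\rho\theta_t$ and $\rho^{3/2}\theta_t$ used in \eqref{b30}), I obtain
\begin{equation*}
\frac{d}{dt}\|\sqrt\rho u_t\|_{L^2}^2+\mu\|\nabla u_t\|_{L^2}^2\le \tilde C\|\sqrt\rho\theta_t\|_{L^2}^2+C(\delta)\psi^A,
\end{equation*}
which replaces \eqref{b32} and makes Step 3 of Lemma \ref{le4} unnecessary.

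Next I would test $\eqref{a1}_3$ against $2\theta_t$, producing \eqref{b33} with the extra damping contribution $\delta\frac{d}{dt}\|\theta\|_{L^2}^2$ on the left. The terms $J_1,J_2$ are handled as in \eqref{r18}--\eqref{r19} using \eqref{b110}, and for $J_3=-2\int F(\nabla u)_t\,\theta\,dx$ I integrate by parts and estimate by $C(\delta)\psi^A\|\nabla u_t\|_{L^2}$ using $\|\theta\|_{L^{p}}\le C(\delta)\psi_3^{1/2}$ (from \eqref{x3} and the $\delta$-bound on $\|\theta\|_{L^2}$) together with \eqref{b044}. This yields
\begin{equation*}
\frac{d}{dt}\!\int\!(\kappa|\nabla\theta|^2+\delta|\theta|^2-2F(\nabla u)\theta)\,dx+c_\upsilon\|\sqrt\rho\theta_t\|_{L^2}^2\le C(\delta)\psi^A(\|\nabla u_t\|_{L^2}^2+1),
\end{equation*}
which replaces \eqref{r21}.

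Finally, I would add the two inequalities after multiplying the second by a large multiple of $c_\upsilon^{-1}$ to absorb $\tilde C\|\sqrt\rho\theta_t\|_{L^2}^2$, then use \eqref{terms} to bound $|\int F(\nabla u)\theta\,dx|\le C(\delta)\psi_1^{5/4}\psi_2^{3/4}\psi_3^{3/4}$, evaluate $\sqrt\rho u_t(0)$ from the compatibility condition \eqref{co1} as in \eqref{qq1}, and integrate in time. The $\psi_3^{3/4}$ factor is absorbed by Young's inequality into $\tfrac12\psi_3(T)$ plus $C(\delta)\psi^A$, yielding the claim. The main obstacle is keeping the exponent of $\psi_3$ strictly below one in $J_3$ and in the $F(\nabla u)\theta$ term; this is exactly where the damping control $\|\theta\|_{L^2}\le\delta^{-1/2}\psi_3^{1/2}$ is indispensable and where the constant $C(\delta)$ blows up as $\delta\to 0$, explaining why this lemma is only a preliminary step toward the uniform-in-$\delta$ estimates of Section \ref{sec3.2}.
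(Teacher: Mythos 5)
Your overall route coincides with the paper's: follow the four-step energy scheme of Lemma \ref{le4}, and replace the whole-space weighted estimates (which rely on the cancellation \eqref{zer}, unavailable on $B_r$) by the damping-supplied bound $\|\te\|_{L^2}^2\le\de^{-1}\psi_3$ when handling $J_3=-2\int F(\na u)_t\te\,dx$ and the endpoint term $\int F(\na u)\te\,dx$; this is exactly the paper's re-estimate of $J_3$ and its use of \eqref{terms}, and your final absorption of the $\psi_3^{3/4}$ factor by Young's inequality is also the paper's closing step.

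The gap is your claim that Step 3 of Lemma \ref{le4} (the $2\rho^2\te_t$ multiplier) becomes unnecessary. You propose to bound $I_4$ via $\|\rho\te_t\|_{L^2}\le\|\rho\|_{L^\infty}^{1/2}\|\sqrt\rho\te_t\|_{L^2}$ and assert that this yields $\tilde C\|\sqrt\rho\te_t\|_{L^2}^2$ with a \emph{fixed} constant $\tilde C$. But $\|\rho\|_{L^\infty}$ is not a priori bounded at this stage; all that is available is $\|\rho\|_{L^\infty}\le\|\bar x^a\rho\|_{L^\infty}\le\psi_2(t)$, and $\psi_2$ is itself one of the unknowns (Lemma \ref{lee6} bounds it only by $1+\int_0^T\psi^A dt$, not by a constant). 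Your estimate therefore produces $C\psi_2(t)\,\|\sqrt\rho\te_t\|_{L^2}^2$ on the right of the $u_t$ inequality, and this time-dependent coefficient cannot be absorbed by adding a \emph{fixed} multiple $c_\upsilon^{-1}(\tilde C+1)$ of the temperature energy identity; multiplying that identity by $\psi_2(t)$ instead would put $\psi_2$ inside the $\frac{d}{dt}$ and destroy the Gronwall structure. Note that the damping controls $\|\te\|_{L^2}$, not $\|\rho\|_{L^\infty}$, so it is irrelevant to this particular difficulty — you have conflated two separate obstructions. This is precisely why the paper splits $\|\rho\te_t\|_{L^2}\le\|\sqrt\rho\te_t\|_{L^2}+\|\rho^{3/2}\te_t\|_{L^2}$ in \eqref{b30} and retains Step 3 to generate the dissipation $c_\upsilon\int\rho^3|\te_t|^2dx$ that absorbs $\|\rho^{3/2}\te_t\|_{L^2}^2$; the combined inequality in the appendix proof indeed still carries the $\ka\rho^2|\na\te|^2+\de\rho^2|\te|^2$ and $\rho^3|\te_t|^2$ terms. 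You need to keep Step 3; the rest of your plan then goes through.
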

\begin{proof}
To begin with, we re-estimate the term $J_3$ in \eqref{b33}. It follows from  \eqref{b044} that
\be\notag\ba
\left|\int F(\na u)_t\te dx\right|
\le C\|\na u_t\|_{L^2}\|\na u\|_{L^2}^{1/2}\|\na u\|_{H^1}^{1/2}\|\te\|_{L^2}^{1/2}\|\te\|_{H^1}^{1/2}
\le C(\de)\psi^A\|\na u_t\|_{L^2},
\ea\ee
which is the same as the one in Lemma \ref{le4} except that the constant $C$ is dependent on $\de$.
Then following the same procedures as in Lemma \ref{le4} shows that
\be\ba\notag
&C\frac{d}{dt}\int(\ka|\na\te|^2+\de|\te|^2-2 F(\na u)\te+\ka\rho^2 |\na\te|^2+\de\n^2|\te|^2)\\
&+\frac{d}{dt}\int\rho|u_t|^2dx+\frac{1}{2}\int(\mu|\na u_t|^2+\n|\te_t|^2+\n^3|\te_t|^2)dx\le C(\de)\psi^A,
\ea\ee
which along with \eqref{terms} and Lemmas \ref{lee4}--\ref{lee6}  leads to that for any $p'>2$,
\be\ba\notag
&\psi_3(T)+\int_0^T(\|\na u_t\|_{L^2}^2+\|\sqrt\n\te_t\|_{L^2}^2)dt\\
&\le C+C(\de)\int_0^T\psi^A dt+C(\de)\psi_1^{5/4}(T)\psi_2^{3/4}(T)\psi_3^{3/4}(T)\\
&\le \left(C(\de)+C(\de,p')\psi_2^{3q/(4(2q+p'q-2p'))+3/4}(T)\right)\left(1+\int_0^T\psi^Adt\right)^2.
\ea\ee
Then,  choosing suitable large $p'$ and  using Young's  inequality  conclude the proof of Lemma \ref{lee3}.
\end{proof}

\begin{lemma}\la{lee2}
Let   $T_2$ be as in Lemma \ref{lee00}. Then there exist  positive constants $C$ and $A\geq 1$ such that for any $0<T\le T_2$,
\be\notag\sup_{0\le t\le T}\|\na(\bar x^a\n)\|_{ L^{q}}\le C+C\int_0^T\psi^A dt.\ee
\end{lemma}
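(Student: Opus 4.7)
\textbf{Proof proposal for Lemma \ref{lee2}.} The plan is to mimic the strategy already used in Lemma \ref{lee7}, only replacing the weighted estimates and the bound on $\psi_3$ with their counterparts from Section \ref{sec5} (namely Lemmas \ref{lee00}, \ref{jia}, and \ref{lee3}). Set $\omega\triangleq\bar x^a\n$. From $\eqref{a1}_1$ one derives the transport-type equation \eqref{b9}, i.e.\ $\p_t\omega+\div(\omega u)-a\omega u\cdot\na\log\bar x=0$. Applying $\p_i$ ($i=1,2$) to \eqref{b9}, multiplying by $|\p_i\omega|^{q-2}\p_i\omega$, integrating by parts over $B_r$, and summing in $i$ give, after standard manipulations,
\begin{equation*}
\frac{d}{dt}\|\na\omega\|_{L^q}\le C\bigl(\|\na u\|_{L^\infty}+\|u\cdot\na\log\bar x\|_{L^\infty}\bigr)\|\na\omega\|_{L^q}+C\|\omega\|_{L^\infty}\bigl(\|\na^2u\|_{L^q}+\||\na u||\na\log\bar x|\|_{L^q}+\||u||\na^2\log\bar x|\|_{L^q}\bigr).
\end{equation*}

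Next I plan to absorb each factor into $\psi$ and the $W^{1,q}$-norm of $\na u$. Using the pointwise bounds $|\na\log\bar x|\le C\bar x^{-1}$ and $|\na^2\log\bar x|\le C\bar x^{-2}$, together with \eqref{x2} to handle $\|u\bar x^{-\eta}\|_{L^{(2+\ve)/\eta}}$ and the $L^\infty$-estimate $\|u\bar x^{-\eta}\|_{L^\infty}\le C(\eta)\psi^A$ from \eqref{b044}, the terms $\|u\cdot\na\log\bar x\|_{L^\infty}$ and $\||u||\na^2\log\bar x|\|_{L^q}$ are all bounded by $C\psi^A$. For $\|\na u\|_{L^\infty}$ and $\|\na^2u\|_{L^q}$ I invoke the key elliptic estimate \eqref{bqq}, which provides
\begin{equation*}
\|\na u\|_{W^{1,q}}\le C\psi^A\bigl(\|\na u_t\|_{L^2}+\|\sqrt\n\te_t\|_{L^2}+1\bigr).
\end{equation*}
Since $\|\omega\|_{L^\infty}\le \psi_2\le \psi$, combining these observations yields
\begin{equation*}
\frac{d}{dt}\|\na\omega\|_{L^q}\le C\psi^A\bigl(\|\na u_t\|_{L^2}+\|\sqrt\n\te_t\|_{L^2}+1\bigr).
\end{equation*}

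Finally I would integrate this inequality on $(0,T)$ for any $0<T\le T_2$, use the Cauchy--Schwarz inequality on the right-hand side together with Lemma \ref{lee3} to control $\int_0^T\bigl(\|\na u_t\|_{L^2}^2+\|\sqrt\n\te_t\|_{L^2}^2\bigr)dt$ by a term of the form $C(\de)(1+\int_0^T\psi^A dt)$, and exploit the monotonicity of $\psi$ to collect all the contributions into a single bound of the desired shape $C+C\int_0^T\psi^A dt$. The only step requiring a little care is the absorption of the factor $(\|\na u_t\|_{L^2}+\|\sqrt\n\te_t\|_{L^2}+1)$: I would write $\psi^A(\|\na u_t\|_{L^2}+\|\sqrt\n\te_t\|_{L^2}+1)\le \psi^{2A}+(\|\na u_t\|_{L^2}^2+\|\sqrt\n\te_t\|_{L^2}^2+1)$ so that after integration Lemma \ref{lee3} applies directly. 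No genuine new difficulty arises; the argument is structurally identical to Lemma \ref{lee7}, with the dependence on $\de$ absorbed into the generic constant.
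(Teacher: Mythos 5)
Your proof is correct and is essentially the approach the paper intends, namely mimicking Lemma \ref{lee7}: start from the transport equation \eqref{b9} for $\omega=\bar x^a\n$, differentiate, take the $L^q$-norm, bound the lower-order weighted terms via \eqref{x2} and \eqref{b044}, use the elliptic estimate \eqref{bqq} for $\|\na u\|_{W^{1,q}}$, and then close the estimate after integrating in time by invoking the bound on $\int_0^T(\|\na u_t\|_{L^2}^2+\|\sqrt\n\te_t\|_{L^2}^2)\,dt$ established in the proof of Lemma \ref{lee3}, exactly as \eqref{b47} is used in the proof of Lemma \ref{lee7}.

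One small remark on the constants: the bound on $\int_0^T(\|\na u_t\|_{L^2}^2+\|\sqrt\n\te_t\|_{L^2}^2)\,dt$ available in Section \ref{sec5} (through Lemma \ref{lee3}) carries a factor $C(\de)$, since the term $\int F(\na u)\te\,dx$ is controlled there via the damping-induced $L^2$ norm of $\te$ rather than the $\de$-independent weighted estimate of Lemma \ref{lee0}. Consequently the constant your argument produces is genuinely $C(\de)$, not the $\de$-independent $C$ that the statement of Lemma \ref{lee2} literally asserts under the convention stated after Proposition \ref{pr1}. You are right to flag this (``with the dependence on $\de$ absorbed into the generic constant''); it is a slight imprecision in the paper's wording rather than a gap in your argument, since Proposition \ref{pr1} itself only claims a bound with $C(\de)$. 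Also, the step you call Cauchy--Schwarz and then carry out explicitly is really Young's inequality, but the conclusion is the same.
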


{\bf Proof of Proposition \ref{pr1}.}
For $T_2$ as in Lemma \ref{lee00},  Lemmas \ref{lee4}--\ref{lee2} lead to that for any $0<T\le T_2$,
\be\ba\notag
\psi(T)\le C(\de)\left(1+\int_0^T\psi^A dt\right).\ea\ee
Denote $ M_2=4C(\de)$, then for $T_\de\triangleq \min\{T_2,  M_2^{-A}\}$, it holds
\be\ba\notag\psi(T_\de)\le M_2/2,\ea\ee
which together with \eqref{b044}--\eqref{bqq}  completes the proof of Proposition \ref{pr1}.
\thatsall

\begin{lemma}\la{lee5}
Let $T_\de$ be as in Proposition \ref{pr1}. Then there exists  a positive constant $C(\de)$  such that 
\be\ba\notag
&\sup_{0\le t\le T_\de}t\left(\|\sqrt\n\te_t\|_{L^2}^2+\|\na^2\te\|_{L^2}^2\right)\\
&+\int_0^{T_\de}t(\|\na\te_t\|_{L^2}^2+\de\|\te_t\|_{L^2}^2+\|\na^2\te\|_{H^{1}}^2)dt\le C(\de).
\ea\ee
\end{lemma}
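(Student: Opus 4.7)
The plan is to mirror the proof of Lemma \ref{le6} step by step, working on the bounded domain $B_r$ with estimates uniform in $r$ but allowed to depend on $\de$. Crucially, all the key ingredients are already in place: the weighted embeddings of Lemma \ref{lee00}, the elliptic and logarithmic bounds of Lemma \ref{jia}, and the global control of $\psi$ supplied by Proposition \ref{pr1}. The roles played in Lemma \ref{le6} by the $\de$-independent Proposition \ref{pr2} are here taken over by Proposition \ref{pr1}, so every constant may freely depend on $\de$.

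First, I would differentiate $\eqref{a1}_3$ in $t$ to obtain an equation for $\te_t$ analogous to \eqref{b53}. Multiplying by $\te_t$ and integrating by parts over $B_r$ — the Neumann condition $\na\te\cdot n=0$ and $u=0$ on $\pa B_r$ ensure the boundary contributions vanish — produces
\begin{equation*}
\tfrac{c_\upsilon}{2}\tfrac{d}{dt}\|\sqrt\n\te_t\|_{L^2}^2 + \ka\|\na\te_t\|_{L^2}^2 + \de\|\te_t\|_{L^2}^2 = \sum_{i=1}^{5} L_i,
\end{equation*}
exactly as in \eqref{b54}. The terms $L_1,\dots,L_4$ are estimated verbatim as in \eqref{b55}--\eqref{b58}, replacing the $\de$-independent weighted inequalities by their appendix analogues \eqref{dd1}--\eqref{x3} and the elliptic bounds \eqref{b044}--\eqref{bqq}; this is legitimate because the only place $\al$ was used in \eqref{b55}--\eqref{b58} is through the weighted Sobolev embeddings, which are identical in both settings.

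The term $L_5=\int F(\na u)_t\te_t\,dx$ is the only one that needs a different treatment, since in the main text its control relied on the weighted estimate $\|\na u\bar x^{\al/2}\|_{L^2}$ from Lemma \ref{lee0}, which is unavailable here. I would instead estimate crudely: by Hölder and \eqref{b110},
\begin{equation*}
|L_5|\le C\|\na u_t\|_{L^2}\|\na u\|_{L^4}\|\te_t\|_{L^4}\le \epsilon\|\na\te_t\|_{L^2}^2+C(\de,\epsilon)(\|\na u_t\|_{L^2}^2+\|\sqrt\n\te_t\|_{L^2}^2),
\end{equation*}
where the $L^4$ norm of $\te_t$ is handled via the Gagliardo--Nirenberg inequality on $B_r$ together with the Poincaré-type bound from Lemma \ref{lee00} (the price of a factor depending on $\de$ comes from the $\de\|\te\|_{L^2}$ term inside $\psi_3$). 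After summing and absorbing the $\|\na\te_t\|_{L^2}^2$ contributions into the left-hand side, I get the analogue of \eqref{lg} with a constant $C(\de)$; multiplying by $t$, integrating on $(0,T_\de)$, and invoking the already-established bound $\int_0^{T_\de}(\|\na u_t\|_{L^2}^2+\|\sqrt\n\te_t\|_{L^2}^2)dt\le C(\de)$ from Lemma \ref{lee3} yields $\sup_{0\le t\le T_\de} t\|\sqrt\n\te_t\|_{L^2}^2+\int_0^{T_\de} t(\|\na\te_t\|_{L^2}^2+\de\|\te_t\|_{L^2}^2)dt\le C(\de)$.

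Finally, to convert these into the stated bounds on $\na^2\te$, I would apply the elliptic regularity estimate \eqref{bbq} to $\eqref{a1}_3$ rewritten as $-\ka\Delta\te=-c_\upsilon\n(\te_t+u\cdot\na\te)+F(\na u)-P\div u-\de\te$. Combined with Proposition \ref{pr1} and the bounds just obtained, this controls $\sup t\|\na^2\te\|_{L^2}^2$ and $\int_0^{T_\de} t\|\na^2\te\|_{H^1}^2 dt$. The main obstacle is the estimate of $L_5$: one must avoid any reliance on the weighted $L^2$ gradient bound of Lemma \ref{lee0}, and instead exploit the $\de$-dependent control of $\|\te\|_{L^2}$ inside $\psi_3$ to close the inequality; all other steps are routine adaptations of Lemma \ref{le6}.
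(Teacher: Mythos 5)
Your overall strategy is exactly the paper's: the proof of Lemma \ref{le6} is repeated on $B_r$ with constants allowed to depend on $\de$, the terms $L_1$--$L_4$ go through unchanged, and only $L_5=\int F(\na u)_t\te_t\,dx$ needs a new, cruder bound that avoids the weighted gradient estimate of Lemma \ref{lee0}. The paper estimates $|L_5|\le C\|\na u_t\|_{L^2}\|\na u\|_{L^4}\|\te_t\|_{L^2}^{1/2}\|\te_t\|_{H^1}^{1/2}$ and then closes with Young's inequality, so you have chosen the same bound. The one step in your write-up that would fail as stated is the treatment of the factor $\|\te_t\|_{L^4}$: Ladyzhenskaya's inequality requires the \emph{unweighted} norm $\|\te_t\|_{L^2}$, which is controlled neither by the Poincar\'e-type bound of Lemma \ref{lee00} (that only yields the weighted quantity $\|\te_t\bar x^{-1}\|_{L^2}\le C(\|\sqrt\n\te_t\|_{L^2}+\|\na\te_t\|_{L^2})$, because of vacuum) nor by the $\de\|\te\|_{L^2}^2$ entry of $\psi_3$ (which bounds $\te$, not $\te_t$). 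Consequently your displayed inequality $|L_5|\le \epsilon\|\na\te_t\|_{L^2}^2+C(\de,\epsilon)(\|\na u_t\|_{L^2}^2+\|\sqrt\n\te_t\|_{L^2}^2)$ is not derivable. The correct mechanism — and the whole point of the damping term $\de\te$ in \eqref{a1}$_3$ — is that differentiating the equation in $t$ and testing with $\te_t$ puts $\de\|\te_t\|_{L^2}^2$ on the \emph{left-hand side} of \eqref{b54}, so Young's inequality gives $|L_5|\le \epsilon(\|\na\te_t\|_{L^2}^2+\de\|\te_t\|_{L^2}^2)+C(\epsilon,\de)\|\na u_t\|_{L^2}^2$, after which both $\epsilon$-terms are absorbed. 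With this correction the rest of your argument (multiplying by $t$, Gronwall, and the elliptic estimate \eqref{bbq} for $\na^2\te$) matches the paper and closes the proof.
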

\begin{proof}
Actually, we  only need to estimate the term $L_5$ in \eqref{b54}. By virtue of  Proposition \ref{pr1},  one gets for any $\epsilon\in(0,1)$,
\be\ba\notag
\left|\int F(\na u)_t\te_t dx\right|
   &\le C\|\na u_t\|_{L^2}\|\na u\|_{L^4}\|\te_t\|_{L^2}^{1/2}\|\te_t\|_{H^1}^{1/2}\\
     &\le \epsilon(\|\na\te_t\|_{L^2}^2+\de\|\te_t\|_{L^2}^2)+C(\epsilon,\de)\|\na u_t\|_{L^2}^2.
\ea\ee
Then we finish the remaining proof of Lemma \ref{lee5} in the similar way as Lemma \ref{le6}.
\end{proof}

Now, we are in a position to prove  Lemma \ref{th3} formally.

{\bf Proof of Lemma \ref{th3}.}  Let $(\n_0\geq 0,u_0,\te_0)$ be the initial data to the Cauchy problem (\ref{a1}) (\ref{q2}) (\ref{q3}) satisfying conditions \eqref{q4}, \eqref{co1}, and $\te_0\in L^2(\r^2)$.  Without loss of generality, we assume that $\|\n_0\|_{L^1(\r^2)}=1,$ which implies there exists a  constant $N_2\geq 1$ such that
\be\notag\int_{B_{N_2}}\n_0 dx\geq 3/4.\ee

To begin with, we construct $\n_0^r=\hat\n_0^r+r^{-1}e^{-|x|^2}$, where $0\le \hat\n_0^r\in C_0^\infty(\O)$ satisfies that
\be\notag\int_{B_{N_2}}\hat\n_0^r dx\geq 1/2\quad \text{and} \quad \lim_{r\rightarrow \infty}\|\bar x^a\hat\n_0^r- \bar x^a\n_0\|_{L^1(\r^2)\cap H^1(\r^2)\cap W^{1,q}(\r^2)}=0,\ee
which leads to
\be\la{52}\ba\int_{B_{N_2}}\n_0^r dx\geq 1/2&\quad \text{and} \quad \lim_{r\rightarrow \infty}\|\bar x^a\n_0^r- \bar x^a\n_0\|_{L^1(\r^2)\cap H^1(\r^2)\cap W^{1,q}(\r^2)}=0.
\ea\ee
Next, we choose $\te_0^r$ as the unique smooth solution to the following elliptic problem:
\be\notag\ba
\begin{cases}
-\Delta \te_0^r+\te_0^r=-\Delta (\te_0\ast j_{r^{-1}})+\te_0\ast j_{r^{-1}}, \quad &\text{in}\,\O,\\
\na\te_0^r\cdot n=0,&\text{on}\,\p\O,
\end{cases}
\ea\ee
with $j_{r^{-1}}$ being the standard mollifying kernel of width $r^{-1}$.
According to  the procedures in \cite{liliang},  we obtain after using \eqref{q4} and $\te_0\in L^2(\r^2)$ that
\be\la{qg}\|\te_0^r\|_{H^1}\le C,\quad\lim_{r\rightarrow \infty}\|\te_0^r-\te_0\|_{H^1(\r^2)}=0.\ee
Similarly, we consider the unique smooth solution $u_0^r$ of the following elliptic problem:
\be\notag\ba
\begin{cases}
-\mu\Delta u_0^r-(\mu+\lambda)\na\div u_0^r+R\na(\n_0^r\te_0^r)+\n_0^r u_0^r=(\n_0^r)^{1/2}h^r,\quad &\text{in}\,\O,\\
u_0^r=0,&\text{on}\,\p\O,
\end{cases}
\ea\ee
where $h^r=(\sqrt{\n_0}u_0+g)\ast j_{r^{-1}}$.  Then, by \eqref{q4}, \eqref{52}, and \eqref{qg}, one has
\be\la{d47} \|\na u^r_0\|_{H^1(B_r)}^2+\|\sqrt{\n_0^r}u_0^r\|_{L^2(B_r)}^2\le C,\ee
\be\ba\la{d48} &\lim_{r\rightarrow\infty}\left(\|\na u_0^r-\na u_0\|_{H^1(\r^2)}+\|\sqrt{\n_0^r}u_0^r-\sqrt{\n_0}u_0\|_{L^2(\r^2)}\right)=0.
\ea\ee

Now, for the approximate initial data $(\n_0^r,u_0^r,\te_0^r)$ satisfying \eqref{52}--\eqref{d48}, Lemma \ref{w1} thus infers that there exists a smooth solution  $(\n^r,u^r,\te^r)$ to the  problem (\ref{a1})--(\ref{bbb})  on $\O\times(0,T_r]$ satisfying \eqref{mn6} and \eqref{mn5} with $(\n,u,\te)$ being replaced by $(\n^r,u^r,\te^r)$. Moreover, for the initial data $(\n_0^r,u_0^r,\te_0^r)$, $g$ defined in \eqref{co1} is replaced by $g^{r}\triangleq h^r-\sqrt{\n_0^r}u_0^r$ satisfying
\be\notag\|g^{r}\|_{L^2(B_r)}\le C,\ee
due to $g\in L^2(\r^2)$, \eqref{q4}, and \eqref{d47}.
Therefore, by virtue of Proposition \ref{pr1},  there exist positive constants $T_\de$, $M_2$, and $C(\de)$ independent of $r$ such that the estimates in Proposition \ref{pr2} and Lemma \ref{lee5} hold for $(\n^r,u^r,\te^r)$. 

Denote
$$\tilde\n^r=\n^r\varphi_r,\quad\tilde u^r=u^r\varphi_r,\quad\tilde\te^r=\te^r\varphi_r,$$
where $(\n^r,u^r,\te^r)$ is extended  by zero to $\r^2$    and  $\varphi_r$ is defined in \eqref{we}.
Then  Proposition \ref{pr1}, Lemma \ref{lee5}, and Poincar$\acute{e}$ inequality conclude that the sequence $(\tilde\n^r, \tilde u^r, \tilde\te^r)$ converges weakly, up to the extraction of subsequences, to some limit $(\n, u,\te)$ satisfying \eqref{bb}.
Next, for any function $\phi\in C_0^\infty(\r^2\times[0,T_\de))$, we take $\phi(\varphi_r)^4$ as text function in the initial-boundary value problem (\ref{a1})--(\ref{bbb})  with the initial data $(\n_0^r,u_0^r,\te_0^r)$. Then let $r\rightarrow \infty$, it follows from  standard arguments that $(\n,u ,\te)$ in fact is a strong solution of \eqref{a1} \eqref{q2} \eqref{q3} on $\r^2\times(0,T_\de]$.  The proof of Lemma \ref{th3} is finished.
\thatsall

\end{document}